\documentclass[english, 11pt]{article}
\usepackage{xcolor}
\usepackage{verbatim}
\usepackage{float}
\usepackage{mathtools}
\usepackage{url}
\usepackage{amsmath}
\usepackage{amsthm}
\usepackage{amssymb}
\usepackage{booktabs}

\makeatletter

\providecommand{\tabularnewline}{\\}
\floatstyle{ruled}
\newfloat{algorithm}{tbp}{loa}
\providecommand{\algorithmname}{Algorithm}
\floatname{algorithm}{\protect\algorithmname}

\usepackage{graphicx,psfrag,amsfonts,verbatim,fullpage,mathtools}
\usepackage[hidelinks]{hyperref}

\usepackage{xurl}

\usepackage{bbm}

\usepackage{dsfont}

\usepackage[framemethod=default]{mdframed}

\global\mdfdefinestyle{vert_line_shuvo}
{
linecolor=black, 
linewidth=2pt, 
topline=false, 
nobreak=true,
bottomline=false, 
rightline=false
} 

\global\mdfdefinestyle{algorithm_box_shuvo}
{
linecolor=black, 
linewidth=1pt, 
innerleftmargin=1pt,
innertopmargin=5pt,
innerbottommargin=5pt,
innerrightmargin=1pt,
nobreak=true,
}

\usepackage{ upgreek } %

\usepackage[bbgreekl]{mathbbol}
\DeclareSymbolFontAlphabet{\mathbbm}{bbold}
\DeclareSymbolFontAlphabet{\mathbb}{AMSb}%
\usepackage[bb=px]{mathalfa}
\usepackage{bm}

\PassOptionsToPackage{obeyFinal}{todonotes}
\usepackage{todonotes}

\usepackage{abstract} %

\theoremstyle{plain}
\newtheorem{thm}{\protect\theoremname}
\theoremstyle{plain}
\newtheorem{cor}{\protect\corollaryname}
\theoremstyle{plain}
\newtheorem{lem}{\protect\lemmaname}
\theoremstyle{plain}
\newtheorem{assumption}{Assumption}
\theoremstyle{plain}

\theoremstyle{definition}

\theoremstyle{plain}

\theoremstyle{remark}

\theoremstyle{plain}
\newtheorem{conjecture}{Conjecture}

\usepackage{babel}
\providecommand{\lemmaname}{Lemma}
\providecommand{\propositionname}{Proposition}
\providecommand{\theoremname}{Theorem}
\providecommand{\corollaryname}{Corollary}

\usepackage{parskip}
\begingroup
\makeatletter
   \@for\theoremstyle:=definition,remark,plain\do{%
     \expandafter\g@addto@macro\csname th@\theoremstyle\endcsname{%
        \addtolength\thm@preskip\parskip
     }%
   }
\endgroup
\usepackage{parskip}

\usepackage{diagbox}

\definecolor{commentcolor}{RGB}{74,112,35}

\usepackage{titlesec}
\def\sectionfont{\sffamily\Large\bfseries\boldmath}
\def\subsectionfont{\sffamily\large\bfseries\boldmath}
\def\paragraphfont{\sffamily\normalsize\bfseries\boldmath}
\titleformat*{\section}{\sectionfont}
\titleformat*{\subsection}{\subsectionfont}
\titleformat*{\subsubsection}{\paragraphfont}
\titleformat*{\paragraph}{\paragraphfont}
\titleformat*{\subparagraph}{\paragraphfont}
\usepackage[small,labelfont={bf,sf}]{caption}  %
\setlength{\abovecaptionskip}{0em}

\usepackage{multirow}
\setlength{\captionmargin}{20pt}
\usepackage{enumitem}
\setlist{nolistsep}

\usepackage{booktabs}           %
\usepackage{adjustbox} %

\usepackage{titling}
\pretitle{\begin{center}\LARGE \bfseries\sffamily}
\posttitle{\par\end{center}\vskip 2em}
\preauthor{\begin{center}
\large \lineskip 2em%
\begin{tabular}[t]{c}}
\postauthor{\end{tabular}\par\end{center}}
\predate{\begin{center}\large}
\postdate{\par\end{center}}

\usepackage{tikz}
\usetikzlibrary{fit, arrows, shapes, positioning, shadows, matrix, calc}
\usepackage[]{subfig}

\RequirePackage{luatex85}
\usepackage{pgfplots}
\pgfplotsset{compat=newest}
\usepgfplotslibrary{groupplots}
\usepgfplotslibrary{polar}
\usepgfplotslibrary{smithchart}
\usepgfplotslibrary{statistics}
\usepgfplotslibrary{dateplot}
\interfootnotelinepenalty=10000

\allowdisplaybreaks

\author{
  \small{\textbf{Shuvomoy Das Gupta}}\\
  \small{MIT Operations Research Center} \\
  \small{\texttt{sdgupta@mit.edu}} \\
  \and
  \small{\textbf{Bart P.G. Van Parys}} \\
  \small{MIT Sloan School of Management} \\
  \small{\texttt{vanparys@mit.edu}} \\
  \and
  \small{\textbf{Ernest K. Ryu}}\\
  \small{Seoul National University} \\
  \small{\texttt{ernestryu@snu.ac.kr}} \\
}

\date{}

\makeatother

\usepackage{babel}

\begin{document}

\title{Branch-and-Bound Performance Estimation Programming: A Unified Methodology for Constructing Optimal Optimization Methods}

\maketitle
\global\long\def\rl{{\mathbb R}}%

\global\long\def\eu{\mathbb{E}}%

\global\long\def\ig{\mathbb{Z}}%

\global\long\def\nt{\mathbb N}%

\global\long\def\limsu{\mathop{\overline{\mathrm{lim}}}}%

\global\long\def\limin{\mathop{\underline{\mathrm{lim}}}}%

\global\long\def\epi{\mathop{{\bf epi}}}%

\global\long\def\intr{\mathop{{\bf int}}}%

\global\long\def\dom{\mathop{{\bf dom}}}%

\global\long\def\ran{\mathop{{\bf ran}}}%

\global\long\def\tr{\mathop{{\bf tr}}}%

\global\long\def\zer{\mathop{{\bf zer}}}%

\global\long\def\conv{\mathop{{\bf conv}}}%

\global\long\def\prox{\mathbf{prox}}%

\global\long\def\argmin{\mathop{{\rm argmin}}}%

\global\long\def\gra{\mathop{{\bf gra}}}%

\global\long\def\fix{\mathop{\mathop{{\bf fix}}}}%

\global\long\def\rank{\mathop{{\bf rank}}}%

\global\long\def\card{\mathop{{\bf card}}}%

\global\long\def\proj{\mathop{{\bf \Pi}}}%

\global\long\def\ones{\mathbf{1}}%

\global\long\def\zeros{\mathbf{0}}%

\global\long\def\regf{f}%

\global\long\def\ind{\iota}%

\global\long\def\sg{\widetilde{\nabla}}%

\global\long\def\morf{\prescript{\gamma}{}{f}}%

\global\long\def\regind{\prescript{\mu}{}{\bbiota}}%

\global\long\def\opA{\mathbb{A}}%

\global\long\def\subg{\widetilde{\nabla}}%

\global\long\def\sgf{f^{\prime}}%

\global\long\def\gf{\nabla f}%

\global\long\def\ft{\tilde{f}}%

\global\long\def\sgft{\tilde{f}^{\prime}}%

\global\long\def\gft{\nabla\tilde{f}}%

\global\long\def\opT{\mathbb{T}}%

\global\long\def\opJ{\mathbb{J}}%

\global\long\def\opR{\mathbb{R}}%

\global\long\def\opS{\mathbb{S}}%

\global\long\def\id{\mathbb{I}}%

\global\long\def\cpc{\mathcal{F}_{0,\infty}}%

\global\long\def\cpcLSbgBd{\mathcal{F}_{0,\infty}^{L}}%

\global\long\def\LSmthCvx{\mathcal{F}_{0,L}}%

\global\long\def\LMinusMuSmthCvx{\mathcal{F}_{0,L-\mu}}%

\global\long\def\LSmthMuStCvx{\mathcal{F}_{\mu,L}}%

\global\long\def\LSmth{\mathcal{F}_{-L,L}}%

\global\long\def\rhoLWcvx{\mathcal{W}_{\rho,L}}%

\begin{abstract}
We present the Branch-and-Bound Performance Estimation Programming (BnB-PEP), a unified methodology for constructing optimal first-order methods for convex and nonconvex optimization. BnB-PEP poses the problem of finding the optimal optimization method as a nonconvex but practically tractable quadratically constrained quadratic optimization problem and solves it to certifiable global optimality using a customized branch-and-bound algorithm. By directly confronting the nonconvexity, BnB-PEP offers significantly more flexibility and removes the many limitations of the prior methodologies. Our customized branch-and-bound algorithm, through exploiting specific problem structures, outperforms the latest off-the-shelf implementations by orders of magnitude, accelerating the solution time from hours to seconds and weeks to minutes. We apply BnB-PEP to several setups for which the prior methodologies do not apply and obtain methods with bounds that improve upon prior state-of-the-art results. Finally, we use the BnB-PEP methodology to find proofs with potential function structures, thereby systematically generating analytical convergence proofs.
\end{abstract}

\newpage

\setcounter{tocdepth}{2}

\tableofcontents

\newpage

\section{Introduction \label{sec:Introduction}}

Since the pioneering work of Nesterov and Nemirovsky on accelerated
gradient methods \cite{Nest83} and information-based complexity \cite{nemirovsky1992information,nemirovski1995information},
finding efficient and optimal first-order methods has been the focus
in the study of large-scale optimization. Recently, renewed vitality
was injected into this classical line of research by the emergence
of computer-assisted methodologies following the Performance
Estimation Problem (PEP) of Drori and Teboulle \cite{drori2014performance}. The celebrated
accelerated gradient method by Nesterov was improved by a constant
factor in \cite{kim2016optimized,van2017fastest,taylor2021optimal},
and entirely novel acceleration mechanisms, distinct from Nesterov's,
have been discovered \cite{kim2021optimizing,kim2021accelerated,lieder2021convergence,yoon2021accelerated}.
These computer-assisted methodologies, roughly speaking, pose the
problem of analyzing an efficient method as a convex semidefinite program,
and the convexity provides certain algorithmic guarantees.

However, the convexity in the formulation simultaneously serves as
a limitation. The aforementioned works presented several ingenious
changes of variables, relaxations, and reformulations to retain convexity,
but such efforts cover only a handful of setups. When these techniques
do not apply, the prior methodologies become inapplicable.

\paragraph{Contribution.}
This work presents 
the Branch-and-Bound Performance
Estimation Programming (BnB-PEP), a methodology for constructing optimal first-order
methods for convex and nonconvex optimization in a tractable and unified
manner. We formulate the problem of finding the optimal optimization
method as a nonconvex quadratically constrained quadratic problem
(QCQP).
By directly confronting the nonconvexity of the QCQPs in consideration, BnB-PEP offers
significantly more flexibility and removes 
the many limitations of
the prior PEP-based methodologies.
We then provide  a customized spatial branch-and-bound algorithm that enables us to solve such QCQPs to certifiable global optimality in a practical time scale.
The customization speeds up the branch-and-bound algorithm, compared to the latest off-the-shelf implementations, by orders of magnitude, reducing runtimes from hours to seconds and weeks to minutes. %
We apply the BnB-PEP methodology to several setups for which the prior methodologies do not apply
and construct methods with bounds improving upon prior state-of-the-art
results. 
Finally, we use the BnB-PEP methodology to find proofs with potential function structures, thereby systematically generating analytical convergence proofs.

\subsection{Prior work}

The performance estimation methodology, initiated by Drori and Teboulle \cite{drori2014performance}, formulates the worst case-performance of an optimization method as an optimization problem itself and upper bounds this performance through a semidefinite program (SDP) ``relaxation''. Taylor, Hendrickx, and Glineur then showed that the SDP formulation is, in fact, tight (not a relaxation) through the notion of convex interpolation \cite{taylor2017smooth}. Lessard, Recht, and Packard combined the notion of the performance estimation methodology with control-theoretic notions through their integral quadratic constraints (IQC) formulation \cite{lessard2016analysis}. Taylor, Van Scoy, and Lessard then showed that IQCs could be seen as a feasible solution to performance estimation problems finding optimal linear convergence rate through Lyapunov functions \cite{taylor2018lyapunov}. Taylor and Bach extended this observation through a methodology that uses the performance estimation approach to find the optimal sublinear rates through potential functions \cite{taylor2019stochastic}.

This advancement in the the performance estimation methodology has led to the discovery of many novel methods and analyses.
Drori and Teboulle numerically constructed the optimized gradient method (OGM) \cite{drori2014performance} and Kim and Fessler found its analytical description \cite{kim2016optimized}.
OGM surpasses Nesterov's fast gradient method by a constant factor and Drori showed that OGM is exactly optimal through an exact matching complexity lower bound \cite{drori2017exact}.
Drori and Taylor constructed efficient first-order methods that utilize 1D and 3D exact line searches using a span-search variant of the performance estimation methodology \cite{drori2020efficient}.
Van Scoy, Freeman, and Lynch constructed the triple momentum method, a method that surpasses the Nesterov's fast gradient method for the strongly convex setup by a constant factor, using the IQC methodology \cite{van2017fastest}.
Taylor and Drori constructed the information-theoretic exact method (ITEM), further improving upon the triple momentum method \cite{taylor2021optimal}. Drori and Taylor showed that ITEM is exactly optimal through an exact matching complexity lower bound \cite{drori2022oracle}.
Kim and Fessler constructed OGM-G, which has the best known rate for reducing the gradient magnitude in the smooth convex setup \cite{kim2021optimizing}.
Finally, the performance estimation methodology has also been utilized for constructing methods with inexact evaluations \cite{cyrus2018,barre2020,de2020worst}, analyzing methods in the composite minimization setup \cite{kim2018another,taylor2018exact}, analyzing methods with exact line search \cite{de2017worst}, analyzing monotone operator and splitting methods \cite{ryu2020operator,gu2020tight,lieder2021convergence,kim2021accelerated}, and analyzing acceleration in the mirror descent setup \cite{Dragomir2021}.

Prior work constructing efficient methods based on the performance estimation methodology relies on two conceptual stages. The first stage poses the \emph{inner} problem as finding the worst-case performance of a given method and formulates the inner problem as a convex SDP through some ingenious change of variables and SDP duality. The second stage constructs an \emph{outer} problem that minimizes the aforementioned worst-case performance as a function of the method. An equivalent view is that the inner problem finds a convergence proof and the outer problem finds the algorithm with the smallest (best) guarantee established by the convergence proof of the inner problem. While the inner optimization problem is convex, setups for which the outer minimization problem is convex are quite rare. As we detail in $\mathsection$\ref{subsec:Converting-minmax-into-a-bnb-pep}, prior work circumvents this nonconvexity within the scope of convex optimization through relaxations and heuristics. However, these prior techniques do not always apply, especially when the underlying optimization problem is nonconvex.
The setups of $\mathsection$\ref{subsec:Smooth-nonconvex-gradient-reduction} and $\mathsection$\ref{subsec:pot-bnb-pep-wcvx-Moreau} of this work are such examples.

\subsection{Organization}
This paper is organized as follows.
In $\mathsection$\ref{sec:background}, we present the necessary background and describe our problem setup.
In $\mathsection$\ref{sec:QCQO-framework-for-template-problem}, we illustrate the BnB-PEP methodology by applying it on a concrete problem instance of constructing the optimal fixed-step first-order method for reducing the gradient of a strongly convex and smooth function.
Our discussion up to $\mathsection$\ref{par:Converting-the-inner-problem-into-SDP} follows prior approaches,
and our novel contribution starts in $\mathsection$\ref{subsec:Converting-minmax-into-a-bnb-pep}.
In $\mathsection$\ref{sec:Efficient-implementation-and-enhancement}, we present customizations of the spatial branch-and-bound algorithm that enables us to solve the QCQPs that construct optimal optimization methods in a practical time scale.
In $\mathsection$\ref{subsec:Generalization-of-QCQO-framework}, we present the generalized formulation of our methodology.
In $\mathsection$\ref{sec:applications}, we demonstrate the effectiveness of our methodology through several applications.
In $\mathsection$\ref{subsec:AWM}, we construct the optimal gradient method without momentum for reducing function value in the smooth convex setup and demonstrate that it outperforms the best known method without momentum. 
In $\mathsection$\ref{subsec:Smooth-nonconvex-gradient-reduction}, we construct the optimal method for reducing gradient norm of smooth nonconvex functions and demonstrate that it outperforms the prior best known method \cite{abbaszadehpeivasti2021exact}. 
In $\mathsection$\ref{subsec:pot-bnb-pep-wcvx-Moreau}, we design an optimized first-order method with
respect to a suitable potential function for reducing the (sub)gradient
norm of nonsmooth weakly convex functions and demonstrate that it outperforms the prior best known method \cite[Theorem 3.1]{davis2019stochastic}.
Additionally, in $\mathsection$\ref{subsec:Numerical-results-ncvx-moreau}, we present a systematic approach to generate analytical proofs from the solutions obtained through our methodology, extending
the approach of Taylor and Bach \cite{taylor2019stochastic} to nonconvex and nonsmooth setups.

\subsection{Computational setup} \label{subsec:computational_setup}

For scientific reproducibility, we open-source our codes to generate all the numerical results presented in this paper at the link: 

\begin{center}
\url{https://github.com/Shuvomoy/BnB-PEP-code}
\par\end{center}

Unless otherwise specified, we performed our numerical
experiments on a laptop computer running Windows 10 Pro with Intel
Core i7-8650U CPU with 16 GB of RAM. We used \texttt{JuMP}---a domain-specific
modeling language for mathematical optimization embedded in the open-source
programming language \texttt{Julia} \cite{JuMPDunningHuchetteLubin2017}---to model the optimization problems. Our proposed algorithm uses
the following solvers: \texttt{Mosek 9.3 }\cite{mosek} (free for
academic use), \texttt{Ipopt 3.12.11} \cite{IpoptLocConv} (open-source), \texttt{KNITRO 13.0.0} \cite{byrd2006k} (free for academic use),
and \texttt{Gurobi 10} \cite{Gurobi95} (free for academic use).

\section{Background and problem setup\label{sec:background}}

Write $\rl^{d}$ for the underlying Euclidean space, even though
our results and formulations extend to the setup where the underlying
setup is a Hilbert space \cite{ryu2020operator}. Write $\left\langle \cdot\mid\cdot\right\rangle $
and $\|\cdot\|$ to denote the standard inner product and norm on
$\rl^{d}$. For $a,b\in\nt$, denote $[a:b]=\{a,a+1,a+2,\ldots,b-1,b\}$.
Write $\rl^{m\times n}$ for the set of $m\times n$ matrices,
$\mathbb{S}^{n}$ for the set of $n\times n$ symmetric matrices,
and $\mathbb{S}_{+}^{n}$ for the set of $n\times n$ positive-semidefinite matrices. We use the standard notation $e_i \in \rl^d$ for the unit vector having a single 1 as
its $i$-th component.
Write $\left(\cdot\odot\cdot\right)\colon\rl^{d}\times\rl^{d}\to\rl^{d\times d}$
to denote the symmetric outer product, that is, for any $x,y\in\rl^{d}$:
\[
x\odot y=\frac{1}{2}\left(xy^{\top}+yx^{\top}\right).
\]

We follow standard convex-analytical definitions \cite{boyd2004convex,nesterov2003introductory,rockafellar2009variational,RyuYin2022_largescale}.
A set $S\subseteq\rl^{d}$ is convex if for any $x,y\in S$ and $\theta\in[0,1],$ we have $\theta x+(1-\theta)y\in S$.
A function $f\colon\rl^{d}\to\rl$ is convex if
\[
f\left(\theta x+(1-\theta)y\right)\leq\theta f(x)+(1-\theta)f(y)
\]
 for all $x,y\in\rl^d$ and $\theta\in(0,1)$.
 
The abstract subdifferential of $f\colon\rl^{d}\to\rl$ at $x$, denoted by $\partial f(x)$,
is defined to satisfy the following properties \cite{Bauschke2021}:
\begin{itemize}
\item[(i)] If $f$ is convex, then the abstract subdifferential is the usual convex subdifferential, i.e., 
\[
\partial f(x) = \{g\in \rl^n\,|\,f(y) \geq f(x) + \langle g \mid y-x\rangle,
\,\forall\,y\in \rl^n
\}.
\]
\item[(ii)] If $f$ is continuously differentiable at $x$, then its abstract subdifferential at $x$ just contains the gradient $\nabla f(x)$, i.e., $\partial f(x)=\{\nabla f(x)\}.$
\item[(iii)] If $f$ attains a local minimum at $x$, 
then $0\in\partial f(x).$ 
\item[(iv)] For all $y\in\rl^{d}$ and $\beta\in\rl$,
\[
\partial\left(f(\cdot)+\frac{\beta}{2}\|\cdot-y\|^{2}\right)=\partial f(\cdot)+\beta(\cdot-y).
\]
\end{itemize}
The Clarke--Rockafellar subdifferential \cite[$\mathsection$1.2]{clarke1990optimization}, Mordukhovich subdifferential \cite[$\mathsection$1.3]{mordukhovich2006PartI}, and Fr\'echet subdifferential  \cite[page 132]{BorweinLewis2006}
are all instances of the abstract subdifferential \cite[page 70]{Bauschke2021}.
Whenever we say \emph{subdifferential} in this paper, we are referring to the abstract subdifferential.
Because our analyses use only the properties of the abstract subdifferential, our results apply to all instances of the abstract subdifferential.
We write $f^{\prime}(x)$ to denote an element of $\partial f(x)$.

We say a function $f$ is $L$-smooth if it is differentiable everywhere and $\nabla f$ is $L$-Lipschitz continuous.
We say a function $f$ is $\mu$-strongly convex if $f(\cdot)-(\mu/2)\|\cdot\|^2$ is convex. {We say a function $f$ is $\rho$-weakly convex if $f+(\rho/2)\|\cdot\|^2$ is convex.}
We say a function $f$ has $L$-bounded subgradients if $\|g\|\le L$ for all $g\in \partial f(x)$ and $x\in \rl^d$.

\subsection{Quadratically constrained quadratic program (QCQP)\label{subsec:Preliminaries-on-QCQP}}

A QCQP is defined as: 
\begin{equation}
p^{\star}=\left(\begin{array}{ll}
\underset{x\in\rl^{q}}{\mbox{minimize}} & c^{\top}x+x^{\top}Q_{0}x\\
\textrm{subject to} & a_{i}^{\top}x+x^{\top}Q_{i}x\leq b_{i},\quad i\in[1:m],\\
 & a_{j}^{\top}x+x^{\top}Q_{j}x=b_{j},\quad j\in[m+1:p],
\end{array}\right)\label{eq:QCQO-template}
\end{equation}
where $x\in\rl^{q}$ is the decision variable. The matrices $Q_{0},Q_{1},\ldots,Q_{p}\in\rl^{q\times q}$
are symmetric, but not necessarily positive-semidefinite. Therefore,
this problem is nonconvex.

\paragraph{Practical tractability of QCQPs.}

The QCQP problem class is NP-hard and therefore has no known polynomial-time
algorithm \cite[pp. 565--567]{bertsimas2005optimization}. However,
such theoretical worst-case intractability does not necessarily imply
that specific problem instances are not \emph{practically tractable}
\cite[Chapter 1]{bertsimas2019machine}.

Branch-and-bound solvers have experienced astounding speedup in the
past few decades. In the last thirty years, branch-and-bound solvers
for mixed-integer optimization (MIO) problems have achieved an algorithmic
speedup of approximately 1,250,000 and a hardware speedup of approximately
1,560,000, resulting in an overall speedup factor of approximately
2 trillion \cite[page 5]{bertsimas2019machine}. While these speedup
factors are for MIO and not for\textsf{ }QCQP, the speedup factors
for QCQP solvers have followed a similar trend since the recent (2019)
incorporation of QCQPs in commercial solvers \cite{Gurobi}. For example,
in less than three years,\texttt{ Gurobi}'s spatial branch-and-bound
algorithm has achieved a machine-independent speedup factor of 175.5
\cite{Gurobi91,Gurobi95}.

This remarkable speedup has rendered previously intractable problems
practically tractable. Furthermore, one can often significantly speed
up the spatial branch-and-bound algorithm by customizing it to exploit
specific problem structure. We present such customizations for the
BnB-PEP Algorithm in $\mathsection$\ref{sec:Efficient-implementation-and-enhancement}
and $\mathsection$\ref{subsec:Generalization-of-QCQO-framework}, and demonstrate
that the speedup is absolutely essential for the BnB-PEP Algorithm to be used practically.

\paragraph{QCQP solvers for local and global solutions.}

Since QCQPs have twice-continuously differentiable objectives and
constraints, one can use interior-point solvers such as \texttt{KNITRO}
\cite{byrd2006k} or \texttt{Ipopt} \cite{wachter2006implementation}
to compute locally optimal solutions under certain regularity conditions
\cite[Theorem 4]{fiacco1990nonlinear}\cite[$\mathsection$3.2]{liberti2008introduction}.
On the other hand, one can use the spatial branch-and-bound algorithm
implemented in solvers such as \texttt{Gurobi} \cite{Gurobi95}
to find globally optimal solutions of nonconvex QCQPs and to certify
their optimality in finite time.

\subsection{Problem setup\label{subsec:Preliminaries-on-performance-estimation-prob}}

Consider the unconstrained minimization problem 
\begin{equation}
\begin{array}{ll}
\underset{x\in\mathbb{R}^{d}}{\mbox{minimize}} & f(x),\end{array}\label{eq:main-opt}
\end{equation}
where $f$ is smooth or nonsmooth, convex or nonconvex. %

For the sake of simplicity, we assume that $f$ has a global minimizer $x_{\star}$ (not necessarily unique).
Optimization problems with further structure, such as problems with
constraints and problems whose objective are sums of functions, can
also be considered. However, we restrict our discussion to this setup
of unconstrained minimization for the sake of simplicity.

\paragraph{Function class $\mathcal{F}$.}
An optimization method is usually designed for a specific class of functions.
In this work, we use the BnB-PEP methodology with function classes listed in Table~\ref{tab:Considered-function-classes}.
More generally, we can use the BnB-PEP methodology with quadratically representable function classes, a notion we further discuss in $\mathsection$\ref{appendix:function-class} of the appendix.

\begin{table}

\begin{centering}
{\footnotesize{}}%
\begin{tabular}{cc}
\toprule 
{\footnotesize{}Function class} & {\footnotesize{}Notation}\tabularnewline
\midrule
\midrule 
{\footnotesize{}$L$-smooth Convex ($0<L<\infty$)} & {\footnotesize{}$\mathcal{F}_{0,L}$}\tabularnewline
\midrule 
{\footnotesize{}$L$-smooth and $\mu$-strongly convex ($0\leq\mu<L<\infty$) } & {\footnotesize{}$\mathcal{F}_{\mu,L}$}\tabularnewline
\midrule 
{\footnotesize{}$L$-smooth Nonconvex ($0<L<\infty$)} & {\footnotesize{}$\mathcal{F}_{-L,L}$}\tabularnewline
\midrule 
{\footnotesize{}$\rho$-weakly convex with $L$-bounded subgradients
($\rho>0,L>0$)} & {\footnotesize{}$\mathcal{W}_{\rho,L}$}\tabularnewline
\bottomrule
\end{tabular}
\par\end{centering}

\caption{Function classes considered in this paper.\label{tab:Considered-function-classes}}
\end{table}

\paragraph{Fixed-step first-order method $\mathcal{M}_{N}$.}

We consider fixed-step first-order methods, which include most subgradient
methods and accelerated gradient methods \cite{taylor2021optimal}.
A method is said to be a \emph{fixed-step first-order method} (FSFOM)
with $N$ steps if it takes in
a function $f$ and a starting point $x_{0}\in\mathbb{R}^{d}$
as input and produces its iterates with: 
\begin{equation}
x_{i}=x_{i-1}-\sum_{j=0}^{i-1}s_{i,j}\sgf(x_{j})\label{eq:fsfom}
\end{equation}
for $i\in[1:N]$, where $\sgf(x_{j})\in\partial f(x_{j})$ is a subgradient
of $f$ at $x_{j}$ for $j\in[0:N-1]$. We can equivalently express
the FSFOM \eqref{eq:fsfom} with 
\[
x_{i}=x_{0}-\sum_{j=0}^{i-1}\overline{s}_{i,j}f^{\prime}(x_{j}),
\]
where $\{s_{i,j}\}_{0\leq j<i\leq N}$ and $\{\overline{s}_{i,j}\}_{0\leq j<i\leq N}$ are related by 
\begin{equation}
\overline{s}_{i,j}=\begin{cases}
s_{i,i-1}, & \textrm{if }j=i-1,\\
\overline{s}_{i-1,j}+s_{i,j}, & \textrm{if }j\in[0:i-2]
\end{cases}\label{eq:TriLinSys-General}
\end{equation}
for $0\leq j<i\leq N$. Write $s=\{s_{i,j}\}_{0\leq j<i\leq N}$ and
$\overline{s}=\{\overline{s}_{i,j}\}_{0\leq j<i\leq N}$ to denote
the collection of stepsizes. The stepsizes $s$ or $\overline{s}$
may depend on the function class $\mathcal{F}$ and the value of $N$,
but are otherwise predetermined. In particular, they may not depend
on function values or gradients observed throughout the method. Write
$\mathcal{M}_{N}$ to denote the set of all FSFOMs with $N$ steps.
We will soon formulate the problem of finding
an optimal FSFOM in $\mathcal{M}_{N}$ as an optimization problem itself, and the stepsizes $s$ or $\overline{s}$ will serve as the decision variables.

The notion of fixed-step \emph{linear} first-order methods extend these definitions to accommodate proximal
methods and conditional gradient methods \cite[pp. 118-119]{taylor2017convex}.
Our BnB-PEP methodology also directly applies to these generalizations,
but we restrict our discussion to FSFOMs for the sake of simplicity.

\paragraph{Performance measure $\mathcal{E}$ and initial condition $\mathcal{C}$.}

For notational convenience, define the index sets
\begin{equation*}
I_{N}=\{0,1,\ldots,N\},
\qquad
I_{N}^{\star}=\{0,1,\ldots,N,\star\}.
\end{equation*}
Throughout this paper, we will use $\star$ as the index corresponding
to the optimal point. Write $\mathcal{E}$ to denote the performance
measure that evaluates a method $M\in\mathcal{M}_{N}$ on a specific
function $f\in\mathcal{F}$ with a starting point $x_{0}$. We require
that $\mathcal{E}$ depends only on iterates $\left\{ x_{0},\ldots,x_{N}\right\} $,
a globally optimal solution $x_{\star}$ to \eqref{eq:main-opt},
and the values and (sub)gradients of $f$ at the points $x_{0},x_{1},\ldots,x_{N},x_{\star}$. In other words, $\mathcal{E}$ may depend on the
solution $x_{\star}$ and zero- and first-order information the FSFOM
observes, but may not depend on other unobserved information of $f$.
Commonly considered performance measures are 
\[
\mathcal{E}\left(\{x_{i},\sgf(x_{i}),f(x_{i})\}_{i\in I_{N}^{\star}}\right)=f(x_{N})-f(x_{\star})
\]
or 
\[
\mathcal{E}\left(\{x_{i},\nabla f(x_{i}),f(x_{i})\}_{i\in I_{N}^{\star}}\right)=\|\nabla f(x_{N})\|^{2}
\]
when $f$ is differentiable.

To obtain a meaningful rate on the methods, we impose a suitable condition
on the initial iterate $x_{0}$, which we abstractly express as 
\begin{equation*}
\mathcal{C}\left(\{x_{i},\sgf(x_{i}),f(x_{i})\}_{i\in I_{N}^{\star}}\right)\leq0.
\end{equation*}
Commonly considered initial conditions are 
\[
\mathcal{C}\left(\{x_{i},\sgf(x_{i}),f(x_{i})\}_{i\in I_{N}^{\star}}\right)=\|x_{0}-x_{\star}\|^{2}-R^{2}
\]
or 
\[
\mathcal{C}\left(\{x_{i},\sgf(x_{i}),f(x_{i})\}_{i\in I_{N}^{\star}}\right)=f(x_{0})-f(x_{\star})-R^{2},
\]
where $R>0$.

\paragraph{Worst-case performance $\mathcal{R}$.}

The worst-case performance or the rate of the method $M\in\mathcal{M}_{N}$
is obtained by maximizing $\mathcal{E}$ over functions in $\mathcal{F}$.
More formally, we define 
\begin{align}
 & \mathcal{R}\left(M,\mathcal{E},\mathcal{F},\mathcal{C}\right)\nonumber \\
 & =\left(\begin{array}{ll}
\textrm{maximize}\quad\mathcal{E}\left(\{x_{i},\sgf(x_{i}),f(x_{i})\}_{i\in I_{N}^{\star}}\right)\\
\textrm{subject to}\\
f\in\mathcal{F},\\
x_{\star}\textrm{ is a globally optimal solution to }\eqref{eq:main-opt},\\
\{x_{i}\}_{i\in[1:N]}\textrm{ {is generated by FSFOM \ensuremath{M} }with initial point \ensuremath{x_{0}}},\\
\mathcal{C}\left(\{x_{i},\sgf(x_{i}),f(x_{i})\}_{i\in I_{N}^{\star}}\right)\leq0,
\end{array}\right)\tag{\ensuremath{\mathcal{O}^{\textrm{inner}}}}\label{eq:worst-case-pfm}
\end{align}
where $f$, $x_{0},\ldots,x_{N}$, and $x_{\star}$ are the decision variables.
We set $x_{\star}=0$ and $f(x_{\star})=0$, which incurs no loss
of generality because the function classes in Table~\ref{tab:Considered-function-classes}
and the FSFOM in consideration are closed and invariant under shifting
variables and function values.

We will soon show that evaluating the worst-case performance of a
given method $M\in\mathcal{M}_{N}$ by solving \eqref{eq:worst-case-pfm}
can be represented as a (finite-dimensional convex) semidefinite-program.

\paragraph{Optimal FSFOM.}
An \emph{optimal} FSFOM $M^{\star}_N\in\mathcal{M}_{N}$ for a given
performance measure $\mathcal{E}$ over function class $\mathcal{F}$
subject to the initial condition $\mathcal{C}$ is a solution to the
following minimax optimization problem: 
\begin{equation}
\begin{array}{ll}
\mathcal{R}^{\star}\left(\mathcal{M}_{N},\mathcal{E},\mathcal{F},\mathcal{C}\right)=\underset{M\in\mathcal{M}_{N}}{\mbox{minimize}} & \mathcal{R}\left(M,\mathcal{E},\mathcal{F},\mathcal{C}\right).\end{array}\tag{\ensuremath{\mathcal{O}^{\textrm{outer}}}}\label{eq:main-min-max-problem}
\end{equation}

Finding an optimal FSFOM $M^{\star}_N\in\mathcal{M}_{N}$
by solving \eqref{eq:main-min-max-problem} is, in general, a nonconvex
problem. The BnB-PEP methodology formulates \eqref{eq:main-min-max-problem}
as a (nonconvex) QCQP and solves it to certifiable
global optimality using a spatial branch-and-bound algorithm
in a practically tractable manner. 
In $\mathsection$\ref{sec:QCQO-framework-for-template-problem}, we illustrate our methodology by describing
it for a concrete problem instance.
In $\mathsection$\ref{subsec:Generalization-of-QCQO-framework}, we present the general form of the the methodology.

\section{BnB-PEP for strongly convex smooth minimization \label{sec:QCQO-framework-for-template-problem}}

This section demonstrates the BnB-PEP methodology on a concrete instance
for which prior methodologies do not apply. The general BnB-PEP methodology
is presented in $\mathsection$\ref{subsec:Generalization-of-QCQO-framework}.

Specifically, we find the optimal FSFOM for reducing the gradient
of $\mu$-strongly convex $L$-smooth functions, with $0\leq\mu<L\leq\infty$.
In other words, we choose the function class $\mathcal{F}=\mathcal{F}_{\mu,L}$
and performance measure $\mathcal{E}=\|\gf(x_{N})\|^{2}$. Further,
we choose the initial condition $\mathcal{C}=\|x_{0}-x_{\star}\|^{2}-R^{2}\leq0$
with $R>0$. Then, an optimal FSFOM is a solution of the following
instance of \eqref{eq:main-min-max-problem}: 
\[
\begin{array}{ll}
\mathcal{R}^{\star}\left(\mathcal{M}_{N},\mathcal{E},\mathcal{F},\mathcal{C}\right)=\underset{M\in\mathcal{M}_{N}}{\mbox{minimize}} & \mathcal{R}\left(M,\mathcal{E},\mathcal{F},\mathcal{C}\right).\end{array}
\]

\subsection{Optimal optimization method from BnB-PEP-QCQP
\label{subsec:Converting-into-QCQO-gradient-reduction-F-mu-L}}

We formulate the outer problem as a (nonconvex) QCQP, which we refer to as the BnB-PEP-QCQP, in the following two
steps. In $\mathsection$\ref{par:Converting-the-inner-problem-into-SDP},
we formulate the inner problem \eqref{eq:worst-case-pfm} as a convex
SDP. This first step follows the approach of \cite{drori2014performance,taylor2017exact}.
Then in $\mathsection$\ref{subsec:Converting-minmax-into-a-bnb-pep}
formulates the outer problem \eqref{eq:main-min-max-problem} as a
QCQP. This second step is is novel.

\subsubsection{Formulating the inner problem \eqref{eq:worst-case-pfm} as a convex
SDP \label{par:Converting-the-inner-problem-into-SDP}}

\paragraph{Infinite-dimensional inner optimization problem.}

By setting $s_{i,j}=\frac{h_{i,j}}{L}$ in \eqref{eq:fsfom}, parameterize FSFOMs in $\mathcal{M}_{N}$ as 
\begin{equation}
x_{i}=x_{i-1}-\frac{1}{L}\sum_{j=0}^{i-1}h_{i,j}\sgf(x_{j})\label{eq:FSFO-tidle}
\end{equation}
for $i\in[1:N]$. Write $h=\{h_{i,j}\}_{0\leq j<i\leq N}$.
Then \eqref{eq:worst-case-pfm} becomes 
\begin{align}
 & \mathcal{R}(M,\mathcal{E},\mathcal{F},\mathcal{C})\nonumber \\
 & =\left(\begin{array}{ll}
\textrm{maximize} & \|\gf(x_{N})\|^{2}\\
\textrm{subject to} & f\in\LSmthMuStCvx,\\
 & \nabla f(x_{\star})=0,\\
 & x_{i}=x_{i-1}-\frac{1}{L}\sum_{j=0}^{i-1}h_{i,j}\nabla f(x_{j}),\quad i\in[1:N],\\
 & \|x_{0}-x_{\star}\|^{2}\leq R^{2},\\
 & x_{\star}=0,\;f(x_{\star})=0,
\end{array}\right)\label{eq:worst-case-pfm-fmuL-1}
\end{align}
where $f,x_{0},\ldots,x_{N}$ are the decision variables. As is, $f$
is an infinite-dimensional decision variable.

\paragraph{Reparametrization from $\mathcal{F}_{\mu,L}$ to $\mathcal{F}_{0,L-\mu}$.}

Next, we use the following lemma to reparameterize \eqref{eq:worst-case-pfm},
defined with function class $\LSmthMuStCvx$, into an equivalent problem
with the $(L-\mu)$-smooth convex function class $\LMinusMuSmthCvx$.
The benefit of the reparametrization is that the final problem becomes
more compact.

\begin{lem}[{Reparametrization from $\mathcal{F}_{\mu,L}$ to $\mathcal{F}_{0,L-\mu}$\label{Reparametrization-lemma}
\cite[$\mathsection$3.2]{taylor2021optimal}}] Consider $f\in\LSmthMuStCvx$
where $0\leq\mu\le L\leq\infty$ with a minimizer $x_{\star}$. Consider
an FSFOM with $f$ and $\{h_{i,j}\}_{0\leq j<i\leq N}$
as defined in \eqref{eq:FSFO-tidle}. Define $\widetilde{f}\coloneqq f-(\mu/2)\|\cdot-x_{\star}\|^{2}$
and an array of parameters $\{\alpha_{i,j}\}_{0\leq j<i\leq N}$ 
\[
\alpha_{i,j}=\begin{cases}
h_{i,i-1}, & \textrm{if }j=i-1,\\
\alpha_{i-1,j}+h_{i,j}-\frac{\mu}{L}\sum_{k=j+1}^{i-1}h_{i,k}\alpha_{k,j}, & \textrm{if }j\in[0:i-2],
\end{cases}
\]
where $i\in[1:N]$ and $j\in[0:i-1]$. Then (i) $\widetilde{f}\in\LMinusMuSmthCvx$
if and only if $f\in\LSmthMuStCvx$, (ii) $x_{\star} \in \argmin\widetilde{f}$,
and (iii) the FSFOM \eqref{eq:FSFO-tidle} is equivalent to 
\begin{align*}
x_{i} & =x_{\star}+(x_{0}-x_{\star})\left(1-\frac{\mu}{L}\sum_{j=0}^{i-1}\alpha_{i,j}\right)-\sum_{j=0}^{i-1}\frac{\alpha_{i,j}}{L}\nabla\widetilde{f}(x_{j})
\end{align*}
for $i\in[1:N]$. \end{lem}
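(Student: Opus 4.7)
Parts (i) and (ii) reduce to standard facts, while the substance of the lemma lies in (iii). I would organize the proof as follows.

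For (i), I would invoke the elementary characterization that a function $g\colon \rl^d \to \rl$ is $\mu$-strongly convex and $L$-smooth if and only if $g - (\mu/2)\|\cdot\|^2$ is convex and $(L-\mu)$-smooth. Since
\[
\widetilde{f}(x) = f(x) - \frac{\mu}{2}\|x-x_\star\|^2 = \Bigl(f(x) - \frac{\mu}{2}\|x\|^2\Bigr) + \mu\langle x_\star, x\rangle - \frac{\mu}{2}\|x_\star\|^2
\]
differs from $f - (\mu/2)\|\cdot\|^2$ only by an affine term, and affine terms preserve both convexity and the Lipschitz constant of the gradient, (i) follows at once. For (ii), since $x_\star\in\argmin f$ gives $\nabla f(x_\star)=0$, the identity $\nabla \widetilde{f}(x) = \nabla f(x) - \mu(x - x_\star)$ yields $\nabla \widetilde{f}(x_\star)=0$, and the convexity of $\widetilde{f}$ from (i) then forces $x_\star\in\argmin \widetilde{f}$.

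For (iii), I would proceed by strong induction on $i\in[1:N]$. Using $\nabla f(x_j) = \nabla \widetilde{f}(x_j) + \mu(x_j - x_\star)$, the update \eqref{eq:FSFO-tidle} rewrites as
\[
x_i - x_\star = (x_{i-1} - x_\star) - \frac{\mu}{L}\sum_{k=0}^{i-1} h_{i,k}(x_k - x_\star) - \frac{1}{L}\sum_{k=0}^{i-1} h_{i,k}\nabla \widetilde{f}(x_k).
\]
Plugging in the inductive hypothesis
\[
x_k - x_\star = (x_0 - x_\star)\Bigl(1 - \frac{\mu}{L}\sum_{j=0}^{k-1}\alpha_{k,j}\Bigr) - \sum_{j=0}^{k-1}\frac{\alpha_{k,j}}{L}\nabla \widetilde{f}(x_j)
\]
for each $k<i$, the inductive step reduces to matching the coefficients of $(x_0-x_\star)$ and of each $\nabla \widetilde{f}(x_j)$ for $j\in[0:i-1]$ on both sides of the claimed identity.

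The bookkeeping of those coefficients is the only nontrivial step. For $j\in[0:i-2]$, the coefficient of $\nabla \widetilde{f}(x_j)$ picks up $-\alpha_{i-1,j}/L$ from $x_{i-1}-x_\star$, the contribution $+(\mu/L^2)\sum_{k=j+1}^{i-1} h_{i,k}\alpha_{k,j}$ from expanding each $x_k - x_\star$ with $k>j$, and the direct term $-h_{i,j}/L$; these combine into exactly $-\alpha_{i,j}/L$ by the recursion defining $\alpha_{i,j}$. For $j = i-1$, only the direct term $-h_{i,i-1}/L = -\alpha_{i,i-1}/L$ survives, matching the boundary case of the recursion. The coefficient of $(x_0 - x_\star)$ is then verified by summing the recursion over $j$ and swapping the order of summation in the double sum $\sum_{j=0}^{i-2}\sum_{k=j+1}^{i-1}$ to obtain $\sum_{k=1}^{i-1}\sum_{j=0}^{k-1}$; this interchange is the main algebraic obstacle, but it is routine. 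Once both coefficient equalities are in place, the induction closes and the equivalence is established.
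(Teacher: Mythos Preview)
Your proof is correct. The paper itself does not supply a proof of this lemma; it simply cites \cite[\S3.2]{taylor2021optimal}, so there is no in-paper argument to compare against. Your approach---reducing (i) and (ii) to standard facts about the shift $f\mapsto f-(\mu/2)\|\cdot-x_\star\|^2$, and handling (iii) by strong induction on $i$ via coefficient matching after substituting $\nabla f(x_j)=\nabla\widetilde f(x_j)+\mu(x_j-x_\star)$---is exactly the natural one and is essentially what the cited reference does. The only step requiring any care, the interchange $\sum_{j=0}^{i-2}\sum_{k=j+1}^{i-1}=\sum_{k=1}^{i-1}\sum_{j=0}^{k-1}$ over the region $0\le j<k\le i-1$, you have identified and handled correctly.
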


\paragraph{Reformulated infinite-dimensional maximization problem.}

Using Lemma~\ref{Reparametrization-lemma}, we reformulate \eqref{eq:worst-case-pfm}
as 
\begin{align*}
 & \mathcal{R}(M,\mathcal{E},\mathcal{F},\mathcal{C})\nonumber \\*
= & \left(\begin{array}{l}
\textrm{maximize}\quad\|\nabla\widetilde{f}(x_{N})\|^{2}+\mu^{2}\|x_{N}-x_{\star}\|^{2}+2\mu\left\langle \nabla\widetilde{f}(x_{N})\mid x_{N}-x_{\star}\right\rangle \\
\textrm{subject to}\\
\widetilde{f}\in\LMinusMuSmthCvx,\\
\nabla\widetilde{f}(x_{\star})=0,\\
x_{i}=x_{0}\left(1-\frac{\mu}{L}\sum_{j=0}^{i-1}\alpha_{i,j}\right)-\frac{1}{L}\sum_{j=0}^{i-1}\alpha_{i,j}\nabla \widetilde{f}(x_{j}),\quad i\in[1:N],\\
\|x_{0}-x_{\star}\|^{2}\leq R^{2},\\
x_{\star}=0,\widetilde{f}(x_{\star})=0,
\end{array}\right)
\end{align*}
where $\widetilde{f},x_{0},\ldots,x_{N}$ are the decision variables.
The decision variable $\widetilde{f}\in\LMinusMuSmthCvx$ is still
infinite-dimensional. Write $\alpha=\{\alpha_{i,j}\}_{0\leq j<i\leq N}$.

\paragraph{Interpolation argument.}

We now convert the infinite-dimensional optimization problem into
finite-dimensional one with the following lemma. 

\begin{lem}[{$\LSmthCvx$-interpolation \cite[Theorem 2]{taylor2021optimal}\label{Thm:Interpolation-inequality-FmuL}}]
Let $I$ be an index set,
and let $\{(x_{i},g_{i},f_{i})\}_{i\in I}\subseteq\mathbb{R}^{d}\times\mathbb{R}^{d}\times\mathbb{R}$.
Let $L>0$.
There exists $f\in\LSmthCvx$ satisfying
$f(x_{i})=f_{i}$ and $g_{i}\in\partial f(x_{i})$ for all $i\in I$
if and only if %
\footnote{
This can be viewed as a discretization of the following condition \cite[Theorem 2.1.5, Equation (2.1.10)]{nesterov2003introductory}:
$f\in\LSmthCvx$ if and only if
\begin{align*}
\quad f(y)\geq f(x)+ & \langle\nabla f(x)\mid y-x\rangle+\frac{1}{2L}\|\nabla f(x)-\nabla f(y)\|^{2},\quad
\forall x,y\in\rl^{d}.
\end{align*}
}
\begin{align*}
f_{i} & \geq f_{j}+\langle g_{j}\mid x_{i}-x_{j}\rangle+\frac{1}{2L}\|g_{i}-g_{j}\|^{2},\quad\forall\,i,j\in I.
\end{align*}
\end{lem}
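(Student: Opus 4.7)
The plan is to prove the two directions separately: necessity by specializing a classical inequality for smooth convex functions to the given data, and sufficiency by reducing to a strongly convex interpolation problem via Fenchel conjugation and then exhibiting an explicit interpolant in the dual space.

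For necessity, take any $f\in\LSmthCvx$ with $f(x_i)=f_i$ and $\nabla f(x_i)=g_i$, and apply the classical inequality recalled in the footnote with $(x,y)=(x_j,x_i)$ to obtain the stated inequality directly. If that characterization is not taken as given, one can recover it by observing that $\varphi(z):=f(z)-\langle \nabla f(x_j)\mid z\rangle$ is $L$-smooth convex with minimizer at $x_j$, and then applying the descent lemma between $x_i$ and $x_i-(1/L)\nabla\varphi(x_i)$.

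For sufficiency, introduce the transformed data $\tilde x_i=g_i$, $\tilde g_i=x_i$, $\tilde f_i=\langle g_i\mid x_i\rangle - f_i$. An elementary rearrangement shows that the hypothesis is equivalent to
\[
\tilde f_i \ge \tilde f_j + \langle \tilde g_j\mid \tilde x_i-\tilde x_j\rangle + \tfrac{1}{2L}\|\tilde x_i-\tilde x_j\|^2\quad\forall\,i,j\in I,
\]
i.e., to a $(1/L)$-strongly convex interpolation condition on $\{(\tilde x_i,\tilde g_i,\tilde f_i)\}_{i\in I}$. I would then construct a $(1/L)$-strongly convex interpolant explicitly as
\[
\phi(y)=\max_{i\in I}\Bigl\{\tilde f_i + \langle \tilde g_i\mid y-\tilde x_i\rangle + \tfrac{1}{2L}\|y-\tilde x_i\|^2\Bigr\},
\]
which is $(1/L)$-strongly convex because each individual summand has Hessian $(1/L)I$ and $\max_i(\phi_i-(1/(2L))\|\cdot\|^2)$ is convex. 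The reduced hypothesis says exactly that at $y=\tilde x_i$ the $i$-th summand dominates the others, which simultaneously yields $\phi(\tilde x_i)=\tilde f_i$ and $\tilde g_i\in\partial\phi(\tilde x_i)$ (the gradient of the $i$-th summand at $\tilde x_i$ equals $\tilde g_i$).

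Finally set $f:=\phi^{*}$, the Fenchel conjugate. Since $\phi$ is proper, closed, and $(1/L)$-strongly convex, $f$ is convex and $L$-smooth, so $f\in\LSmthCvx$. Fenchel duality gives $\tilde g_i\in\partial\phi(\tilde x_i)\Leftrightarrow \tilde x_i\in\partial f(\tilde g_i)$; combined with differentiability of $f$, this yields $\nabla f(x_i)=g_i$, and the Fenchel--Young equality at the matched pair gives $f(x_i)=\langle \tilde x_i\mid \tilde g_i\rangle-\phi(\tilde x_i)=\langle g_i\mid x_i\rangle-\tilde f_i=f_i$. The only place the hypothesis is genuinely used---and the principal obstacle---is verifying that the $i$-th summand dominates in the definition of $\phi$ at $y=\tilde x_i$; everything else is standard convex analysis and conjugacy bookkeeping.
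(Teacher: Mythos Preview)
The paper does not prove this lemma; it simply cites it from \cite[Theorem~2]{taylor2021optimal} (which in turn traces to Taylor--Hendrickx--Glineur \cite{taylor2017smooth}). Your proposal is correct and is essentially the standard proof in that literature: necessity via the cocoercivity inequality, sufficiency by Fenchel-dualizing to a $(1/L)$-strongly convex interpolation problem and constructing the interpolant as a pointwise supremum of quadratics. Your transformation $(\tilde x_i,\tilde g_i,\tilde f_i)=(g_i,x_i,\langle g_i\mid x_i\rangle-f_i)$ and the subsequent conjugacy bookkeeping are accurate, and you have correctly isolated the only nontrivial step (dominance of the $i$-th summand at $\tilde x_i$), which is exactly the transformed hypothesis. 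One cosmetic point: since the lemma allows an arbitrary index set $I$, your $\max$ should be a $\sup$; the argument goes through unchanged because $\phi$ remains proper, closed, and $(1/L)$-strongly convex as a supremum of quadratics sharing the same Hessian, and the subgradient inclusion $\tilde g_i\in\partial\phi(\tilde x_i)$ follows from $\phi(z)\ge\phi_i(z)\ge\phi_i(\tilde x_i)+\langle\tilde g_i\mid z-\tilde x_i\rangle$ without needing the supremum to be attained.
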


\paragraph{Finite-dimensional maximization problem.}
Using Lemma~\ref{Thm:Interpolation-inequality-FmuL}, reformulate
\eqref{eq:worst-case-pfm} as 
\begin{align}
 & \mathcal{R}(M,\mathcal{E},\mathcal{F},\mathcal{C}) \nonumber \\
= & \left(\begin{array}{l}
\textrm{maximize}\quad\|g_{N}\|^{2}+\mu^{2}\|x_{N}-x_{\star}\|^{2}-2\mu\left\langle g_{N}\mid x_{\star}-x_{N}\right\rangle \\
\textrm{subject to}\\
f_{i}\geq f_{j}+\langle g_{j}\mid x_{i}-x_{j}\rangle+\frac{1}{2(L-\mu)}\|g_{i}-g_{j}\|^{2},\quad i,j\in I_{N}^{\star}:i\neq j,\\
g_{\star}=0,x_{\star}=0,f_{\star}=0,\\
x_{i}=x_{0}\left(1-\frac{\mu}{L}\sum_{j=0}^{i-1}\alpha_{i,j}\right)-\frac{1}{L}\sum_{j=0}^{i-1}\alpha_{i,j}g_{j},\quad i\in[1:N],\\
\|x_{0}-x_{\star}\|^{2}\leq R^{2}.
\end{array}\right)\label{eq:worst-case-pfm-fmuL-3}
\end{align}
Now, the decision variables are $\{x_{i},g_{i},f_{i}\}_{i\in I_{N}^{\star}}\subseteq\rl^{d}\times\rl^{d}\times\rl$,
where $I_{N}^{\star}=\{0,1,\ldots,N,\star\}$, and the optimization
problem is finite-dimensional, although nonconvex. To clarify, we
are applying Lemma~\ref{Thm:Interpolation-inequality-FmuL} on $\widetilde{f}\in\LMinusMuSmthCvx$,
rather than $f\in\LSmthMuStCvx$. However, we use the symbols $\{f_{i}\}_{i\in I_{N}^{\star}}$,
rather than the arguably more consistent $\{\widetilde{f}_{i}\}_{i\in I_{N}^{\star}}$ for the sake of notational conciseness.

\paragraph{Grammian formulation.}

Next, we formulate \eqref{eq:worst-case-pfm} as a convex SDP. Let
\begin{equation}
\begin{alignedat}{1}H & =[x_{0}\mid g_{0}\mid g_{1}\mid\ldots\mid g_{N}]\in\rl^{d\times(N+2)},\\
G & =H^{\top}H\in\mathbb{S}_{+}^{N+2},\\
F & =[f_{0}\mid f_{1}\mid\ldots\mid f_{N}]\in\rl^{1\times(N+1)}.
\end{alignedat}
\label{eq:grammian-mats}
\end{equation}
Note that $\rank G\leq d$. Define the following notation for selecting
columns and elements of $H$ and $F$: 
\begin{equation}
\begin{alignedat}{1} & \mathbf{g}_{\star}=0\in\rl^{N+2},\;\mathbf{g}_{i}=e_{i+2}\in\rl^{N+2},\quad i\in[0:N]\\
 & \mathbf{x}_{0}=e_{1}\in\rl^{N+2},\;\mathbf{x}_{\star}=0\in\rl^{N+2},\\
 & \mathbf{x}_{i}=\mathbf{x}_{0}\left(1-\frac{\mu}{L}\sum_{j=0}^{i-1}\alpha_{i,j}\right)-\frac{1}{L}\sum_{j=0}^{i-1}\alpha_{i,j}\mathbf{g}_{j}\in\rl^{N+2},\quad i\in[1:N]\\
 & \mathbf{f}_{\star}=0\in\rl^{N+1},\;\mathbf{f}_{i}=e_{i+1}\in\rl^{N+1},\quad i\in[0:N].
\end{alignedat}
\label{eq:bold-vec}
\end{equation}
This notation is defined so that 
\[
x_{i}=H\mathbf{x}_{i},\;g_{i}=H\mathbf{g}_{i},\;f_{i}=F\mathbf{f}_{i}
\]
for $i\in I_{N}^{\star}$. Note that $\mathbf{x}_{i}$ depends on
$\{\alpha_{i,j}\}_{j\in[0:i-1]}$ linearly for $i\in[1:N]$. Furthermore,
for $i,j\in I_{N}^{\star}$, define 
\begin{equation}
\begin{alignedat}{1} & A_{i,j}(\alpha)=\mathbf{g}_{j}\odot(\mathbf{x}_{i}-\mathbf{x}_{j})\in\mathbb{S}^{N+2},\\
 & B_{i,j}(\alpha)=(\mathbf{x}_{i}-\mathbf{x}_{j})\odot(\mathbf{x}_{i}-\mathbf{x}_{j})\in\mathbb{S}_{+}^{N+2},\\
 & C_{i,j}=(\mathbf{g}_{i}-\mathbf{g}_{j})\odot(\mathbf{g}_{i}-\mathbf{g}_{j})\in\mathbb{S}_{+}^{N+2},\\
 & a_{i,j}=\mathbf{f}_{j}-\mathbf{f}_{i}\in\mathbb{R}^{N+1}.
\end{alignedat}
\label{eq:ABCa-mat-vec}
\end{equation}
Note that $A_{i,j}(\alpha)$ is affine and $B_{i,j}(\alpha)$ is quadratic
as functions of $\{\alpha_{i,j}\}_{i\in[1:N],j\in[0,i-1]}$. This
notation is defined so that 
\begin{equation}
\begin{alignedat}{1} & \left\langle g_{j}\mid x_{i}-x_{j}\right\rangle =\tr GA_{i,j}(\alpha),\\
 & \|x_{i}-x_{j}\|^{2}=\tr GB_{i,j}(\alpha),\\
 & \|g_{i}-g_{j}\|^{2}=\tr GC_{i,j},
\end{alignedat}
\label{eq:effect-of-grammian-trs}
\end{equation}
for $i,j\in I_{N}^{\star}$.

Using this notation, formulate \eqref{eq:worst-case-pfm} as 
\begin{align*}
 & \mathcal{R}(M,\mathcal{E},\mathcal{F},\mathcal{C})\\
= & \left(\begin{array}{l}
\textrm{maximize}\quad\tr G\left(C_{N,\star}+\mu^{2}B_{N,\star}(\alpha)-2\mu A_{\star,N}(\alpha)\right)\\
\textrm{subject to}\\
Fa_{i,j}+\tr GA_{i,j}(\alpha)+\frac{1}{2(L-\mu)}\tr GC_{i,j}\leq0,\quad i,j\in I_{N}^{\star}:i\neq j,\\
-G\preceq0,\,\rank(G)\le d,\\
\tr GB_{0,\star}\leq R^{2},
\end{array}\right)
\end{align*}
where $F\in\rl^{1\times(N+1)}$ and $G\in\rl^{(N+2)\times(N+2)}$
are the decision variables. The equivalence relies on the fact that given a $G\in\mathbb{S}_{+}^{N+2}$
satisfying $\rank(G)\le d$, there exists a $H\in\rl^{d\times(N+2)}$
such that $G=H^{\top}H$. The argument is further detailed in \cite[$\mathsection$3.2]{taylor2017smooth}.
This formulation is not yet a convex SDP due to the rank constraint
$\rank(G)\le d$.

\paragraph{SDP representation.}

Next, we make the following large-scale assumption. 

\begin{assumption} \label{large-scale-assumption} We have $d\geq N+2$.
\end{assumption} 

Under this assumption, the constraint $\rank G\leq d$ becomes vacuous, since
$G\in\mathbb{S}_{+}^{(N+2)}$. We drop the rank constraint
and formulate \eqref{eq:worst-case-pfm} as a convex SDP 
\begin{align}
 & \mathcal{R}(M,\mathcal{E},\mathcal{F},\mathcal{C})\nonumber \\
= & \left(\begin{array}{l}
\textrm{maximize}\quad\tr G\left(C_{N,\star}+\mu^{2}B_{N,\star}(\alpha)-2\mu A_{\star,N}(\alpha)\right)\\
\textrm{subject to}\\
Fa_{i,j}+\tr GA_{i,j}(\alpha)+\frac{1}{2(L-\mu)}\tr GC_{i,j}\leq0,\quad i,j\in I_{N}^{\star}:i\neq j, \quad \rhd \textsf{\;dual var.\;} \lambda_{i,j}\geq0 \\
-G\preceq0, \quad \rhd \textsf{\;dual var.\;}  Z\succeq0 \\
\tr GB_{0,\star}\leq R^{2}, \quad \rhd \textsf{\;dual var.\;} \nu\geq0
\end{array}\right)\label{eq:worst-case-pfm-sdp-1}
\end{align}
where $F\in\rl^{1\times(N+1)}$ and $G\in\rl^{(N+2)\times(N+2)}$
are the decision variables. We denote the corresponding dual variables on the right hand side of the constraints with $\rhd \textsf{\;dual var.\;}$\ for later use.

 {
 We emphasize that dropping the rank constraint is not a relaxation;
the optimization problem \eqref{eq:worst-case-pfm-sdp-1} and its solution is independent of the dimension $d$, provided that the large-scale assumption $d\ge N+2$ holds. See \cite[$\mathsection$3.3]{taylor2017smooth} for further discussion.}

\paragraph{Dualization.}

Next we use convex duality to formulate \eqref{eq:worst-case-pfm},
originally a maximization problem, as a minimization problem. Take
the dual of \eqref{eq:worst-case-pfm-sdp-1} to get 
\begin{align}
 & \overline{\mathcal{R}}(M,\mathcal{E},\mathcal{F},\mathcal{C})\nonumber \\
 & =\left(\begin{array}{l}
\textrm{minimize}\quad\nu R^{2}\\
\textrm{subject to}\\
\sum_{i,j\in I_{N}^{\star}:i\neq j}\lambda_{i,j}a_{i,j}=0,\\
\nu B_{0,\star}-C_{N,\star}-\mu^{2}B_{N,\star}(\alpha)+2\mu A_{\star,N}(\alpha)+\\
\quad\sum_{i,j\in I_{N}^{\star}:i\neq j}\lambda_{i,j}\left(A_{i,j}(\alpha)+\frac{1}{2(L-\mu)}C_{i,j}\right)=Z,\\
Z\succeq0,\\
\nu\geq0,\;\lambda_{i,j}\geq0,\quad i,j\in I_{N}^{\star}:i\neq j,
\end{array}\right)\label{eq:worst-case-pfm-dual-1}
\end{align}
where $\nu\in\rl$, $\lambda=\{\lambda_{i,j}\}_{i,j\in I_{N}^{\star}:i\neq j}$,
and $Z\in\mathbb{S}_{+}^{N+2}$ are the decision variables. 
(Note, we write $\overline{\mathcal{R}}$ rather than $\mathcal{R}$ here.)
We call $\lambda,\nu,$ and $Z$ the \emph{inner-dual variables}.

By weak duality of convex SDPs, we have
\[
\mathcal{R}(M,\mathcal{E},\mathcal{F},\mathcal{C})\le \overline{\mathcal{R}}(M,\mathcal{E},\mathcal{F},\mathcal{C}).
\]
In convex SDPs, strong duality holds often but not always. For the
sake of simplicity, we assume strong duality holds. \begin{assumption}\label{strong-duality-assumption}
Strong duality holds between \eqref{eq:worst-case-pfm-sdp-1} and
\eqref{eq:worst-case-pfm-dual-1}, i.e.,
\[
\mathcal{R}(M,\mathcal{E},\mathcal{F},\mathcal{C})= \overline{\mathcal{R}}(M,\mathcal{E},\mathcal{F},\mathcal{C}).
\]
\end{assumption}

This assumption can be removed in most cases using the line of reasoning of \cite[Claim~4]{park2021}, but the technique is tedious. Since strong duality for convex SDPs ``usually'' holds, we argue there is little utility in pursuing this direction. Nevertheless, in a strict sense, the assumption constitutes a gap in our mathematical arguments. We detail the implication of this gap in $\mathsection$\ref{appendix:assumption2} of the appendix. In any case, strong duality holds for ``generic'' FSFOMs \cite[Theorem 6]{taylor2017smooth}, so one can safely use the BnB-PEP methodology with confidence that the obtained FSFOM will be optimal among the ``nice'' generic FSFOMs.

Now we have arrived at the end of the formulation based on prior works
\cite{drori2014performance,taylor2017exact}, and, at this point,
our formulations diverge.

\subsubsection{Formulating the outer problem \eqref{eq:main-min-max-problem} as
a QCQP \label{subsec:Converting-minmax-into-a-bnb-pep}}

With \eqref{eq:worst-case-pfm} formulated as a minimization problem,
the outer optimization problem \eqref{eq:main-min-max-problem} becomes
a joint minimization over the inner dual variables and the FSFOM parameters
$\alpha$. However, the outer minimization problem is not convex in
all of the variables, even though the inner problem is.

Prior work circumvents this nonconvexity within the scope of convex
optimization. Prior work on OGM \cite{drori2014performance,kim2016optimized},
ITEM \cite{taylor2021optimal}, ORC-F$_{\flat}$ \cite[$\mathsection$4]{park2021},
and OBL-F$_{\flat}$ \cite[$\mathsection$5.1]{park2021} convexify \eqref{eq:main-min-max-problem}
into an SDP through appropriate relaxations and changes of variables.
The relaxation discards certain inequalities, a process discussed
in $\mathsection$\ref{subsec:Sparsifier}. Due to the relaxation,
the solution FSFOM of the relaxed SDP is not necessarily the exact
optimal FSFOM. Exact optimality of OGM \cite{drori2017exact} and
ITEM \cite{drori2022oracle} were proved through separate exact matching
complexity lower bounds. Even though ORC-F$_{\flat}$ and OBL-F$_{\flat}$
are optimal solutions of the relaxed SDP, they are not optimal FSFOMs,
as their guarantees are worse than that of OGM. Prior work on OGM-G
\cite{kim2021optimizing}, M-OGM-G \cite{ZhouTianSoCheng2021_practical},
OBL-G$_{\flat}$ \cite{park2021}, APPM \cite{kim2021accelerated,lieder2021convergence},
and SM-APPM \cite{park2022} formulate \eqref{eq:main-min-max-problem}
as bi-convex optimization problems, as problems with bilinear matrix
inequalities (BMIs), after relaxations discarding certain inequalities.
Although bi-convex problems (which are nonconvex) do not have provably
efficient algorithms, prior work have obtained FSFOMs using the alternating
minimization heuristic. Exact optimality of APPM and SM-APPM were
proved through separate exact matching complexity lower bounds \cite{park2022}.
OGM-G is presumed but not proven to be exactly optimal.
M-OGM-G and OBL-G$_{\flat}$ are not optimal FSFOMs in the usual sense
as their guarantees are worse than that of OGM-G. For the setups we
consider, especially the setup of $\mathsection$\ref{subsec:Smooth-nonconvex-gradient-reduction}
and $\mathsection$\ref{subsec:pot-bnb-pep-wcvx-Moreau}, these prior
techniques do not apply and the optimization over the FSFOM cannot
be formulated as a convex nor a bi-convex optimization problem (to
the best of our knowledge) even after appropriate relaxations. 

\paragraph{Formulating \eqref{eq:main-min-max-problem} as a QCQP.}

The nonconvex outer optimization problem \eqref{eq:main-min-max-problem}
minimizes over $\alpha$ in addition to the inner dual variables of
\eqref{eq:worst-case-pfm-dual-1}. We confront the nonconvexity directly
by formulating \eqref{eq:main-min-max-problem} as a (nonconvex) QCQP
and solving it with spatial branch-and-bound algorithms. We do not
discard constraints or use any relaxation. To this end, we replace
the semidefinite constraint with a quadratic constraint via the Cholesky
factorization.

\begin{lem}[{\cite[Corollary 7.2.9]{horn2012matrix}\label{Lem:quadratic-characterization-psd-1}}]
A matrix $Z\in\mathbb{S}^{n}$ is positive semidefinite if and only
if it has a Cholesky factorization $PP^{\top}=Z$, where $P\in\rl^{n \times n}$
is lower triangular with nonnegative diagonals.\end{lem}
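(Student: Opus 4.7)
The plan is to prove both directions, with the nontrivial content being the ``only if'' direction. For the ``if'' direction, observe that for any $P \in \rl^{n\times n}$ (triangular or not), one has $x^{\top}PP^{\top}x = \|P^{\top}x\|^{2} \geq 0$ for all $x \in \rl^{n}$, so $Z = PP^{\top}$ is automatically positive semidefinite; this direction needs no structural assumption on $P$.

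For the converse, I would proceed by induction on $n$. The base case $n=1$ is immediate: a scalar $z \geq 0$ factors as $z = \sqrt{z}\cdot\sqrt{z}$. For the inductive step, block-partition
\[
Z = \begin{bmatrix} a & b^{\top} \\ b & Z' \end{bmatrix},
\]
with $a \in \rl$, $b \in \rl^{n-1}$, and $Z' \in \mathbb{S}^{n-1}$. Since $a = e_{1}^{\top}Z e_{1} \geq 0$, split into two cases. If $a > 0$, form the Schur complement $S = Z' - bb^{\top}/a$; a standard block congruence shows $S \in \mathbb{S}_{+}^{n-1}$, so the inductive hypothesis yields $S = P'(P')^{\top}$ with $P'$ lower triangular and nonnegative on the diagonal, and then
\[
P = \begin{bmatrix} \sqrt{a} & 0 \\ b/\sqrt{a} & P' \end{bmatrix}
\]
satisfies $PP^{\top} = Z$ with the required structure. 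If $a = 0$, a test-vector argument (plug $x = (t, \pm e_{i})$ into the PSD inequality and send $t \to \pm\infty$) forces $b = 0$, so $Z$ is block-diagonal with a zero top block; apply induction to $Z'$ and pad the resulting factor with a zero first row and column.

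The main subtlety is the degenerate case $a = 0$: one cannot divide by the top-left entry, and one must first establish that the entire first column vanishes before the induction can proceed. After that step, everything is routine bookkeeping. An alternative route that sidesteps the case split is to combine the spectral theorem with $QR$: write $Z = U\Lambda U^{\top}$ with $U$ orthogonal and $\Lambda$ diagonal PSD, apply a $QR$ factorization $\Lambda^{1/2}U^{\top} = QR$ with $Q$ orthogonal and $R$ upper triangular, absorb sign flips jointly into the rows of $R$ and columns of $Q$ so that $R$ has a nonnegative diagonal, and set $P = R^{\top}$. Then $PP^{\top} = R^{\top}R = (QR)^{\top}(QR) = U\Lambda U^{\top} = Z$, and $P$ is lower triangular with nonnegative diagonal as required.
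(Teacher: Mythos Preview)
Your proof is correct. Both the inductive Schur-complement argument and the spectral-plus-QR alternative are standard and valid routes to the Cholesky factorization of a positive semidefinite matrix; the handling of the degenerate case $a=0$ via the test-vector argument is the right way to close the gap.

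There is nothing to compare against here: the paper does not supply its own proof of this lemma. It is stated purely as a citation to \cite[Corollary 7.2.9]{horn2012matrix} and used as a black box to replace the semidefinite constraint $Z\succeq 0$ by the quadratic constraints $PP^{\top}=Z$ with $P$ lower triangular and nonnegative on the diagonal. Your write-up therefore goes beyond what the paper provides.
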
 

In raw index form, the conditions of Lemma~\ref{Lem:quadratic-characterization-psd-1}, applied to the present setup,
have the following equivalent representations:
\begin{align*}
 & \left(\begin{array}{l}
P\textrm{ is lower triangular with nonnegative diagonals},\\
PP^{\top}=Z.
\end{array}\right)\\
\Leftrightarrow & \left(\begin{array}{l}
P_{j,j}\geq0,\quad j\in[1:N+2],\\
P_{i,j}=0,\quad 1\le i<j\le N+2,\\
\sum_{k=1}^{j}P_{i,k}P_{j,k}=Z_{i,j},\quad 1\le j\le i\le N+2.
\end{array}\right)
\end{align*}

We now formulate \eqref{eq:main-min-max-problem}, the problem of
finding an optimal FSFOM, as the following QCQP 
\begin{align}
 &  \mathcal{R}^{\star}(\mathcal{M}_{N},\mathcal{E},\mathcal{F},\mathcal{C})\nonumber \\
= & \left(\begin{array}{l}
\textrm{minimize}\quad\nu R^{2}\\
\textrm{subject to}\\
\sum_{i,j\in I_{N}^{\star}:i\neq j}\lambda_{i,j}a_{i,j}=0,\\
\nu B_{0,\star}-C_{N,\star}-\mu^{2}B_{N,\star}(\alpha)+2\mu A_{\star,N}(\alpha)+\\
\qquad\sum_{i,j\in I_{N}^{\star}:i\neq j}\lambda_{i,j}\left(A_{i,j}(\alpha)+\frac{1}{2(L-\mu)}C_{i,j}\right)=Z,\\
P\text{ is lower triangular with nonnegative diagonals},\\
PP^{\top}=Z,\\
\nu\geq0,\;\lambda_{i,j}\geq0,\quad i,j\in I_{N}^{\star}:i\neq j,
\end{array}\right)\label{eq:BnB-PEP-Preli}
\end{align}
where $\lambda$, $\nu$, $Z$, $P$, and $\alpha$ are the decision
variables. We name this optimization problem the BnB-PEP-QCQP. 

\subsection{Solving the BnB-PEP-QCQP using the BnB-PEP Algorithm \label{subsec:A-globally-optimal-alg-for-gradient-reduction-F-mu-L}}

We now solve the BnB-PEP-QCQP \eqref{eq:BnB-PEP-Preli} to certifiable optimality
in a practical time scale via the BnB-PEP Algorithm, stated as Algorithm~\ref{alg:Alg-1}.
The algorithm has 3 stages: Stage 1 finds a feasible point, Stage
2 uses an interior-point solver to find a locally optimal solution,
and Stage 3 uses a spatial branch-and-bound solver to find a globally
optimal solution.

\begin{algorithm}[h]

\textbf{Stage 1.} Compute a feasible solution. 
\begin{itemize}
\item Set $\alpha_{i,i-1}^{\textrm{init}}\leftarrow1$ and $\alpha_{i,j}^{\textrm{init}}\leftarrow0$
for for $i\in[1:N]$, $j\neq i-1$. 
\item Set $\alpha\leftarrow\alpha^{\textrm{init}}$ in \eqref{eq:worst-case-pfm-dual-1}
and solve the convex SDP. Denote the computed optimal solution to
\eqref{eq:worst-case-pfm-dual-1} by $\{\nu^{\textrm{init}},\lambda^{\textrm{init}},Z^{\textrm{init}}\}$. 
\item Compute Cholesky decomposition $Z^{\textrm{init}}=P^{\textrm{init}}(P^{\textrm{init}})^{\top}$. 
\end{itemize}
\textbf{Stage 2.} Compute a locally optimal solution by warm-starting
at Stage 1 solution. 
\begin{itemize}
\item Warm-start \eqref{eq:BnB-PEP-Preli} with $\{\alpha^{\textrm{init}},\nu^{\textrm{init}},\lambda^{\textrm{init}},Z^{\textrm{init}},P^{\textrm{init}}\}$
and solve it to local optimality using the nonlinear interior-point
method. Denote the solution by $\{\alpha^{\textrm{lopt}},\nu^{\textrm{lopt}},\lambda^{\textrm{lopt}},Z^{\textrm{lopt}},P^{\textrm{lopt}}\}$.
\end{itemize}
\textbf{Stage 3.} Compute a globally optimal solution by warm-starting
at Stage 2 solution. 
\begin{itemize}
\item Warm-start \eqref{eq:BnB-PEP-Preli} with $\{\alpha^{\textrm{lopt}},\nu^{\textrm{lopt}},\lambda^{\textrm{lopt}},Z^{\textrm{lopt}},P^{\textrm{lopt}}\}$
and solve it to global optimality using a customized spatial branch-and-bound
algorithm described in $\mathsection$\ref{sec:Efficient-implementation-and-enhancement}.
Denote the solution by $\{\alpha^{\star},\nu^{\star},\lambda^{\star},Z^{\star},P^{\star}\}$
and the optimal objective value by $p^{\star}$. 
\end{itemize}
\textbf{Return:} $\{\alpha^{\star},\nu^{\star},\lambda^{\star},Z^{\textrm{\ensuremath{\star}}},P^{\textrm{\ensuremath{\star}}}\}$
and $p^{\star}.$ \caption{BnB-PEP Algorithm: Given $(\mu,L,R)$, solves \eqref{eq:BnB-PEP-Preli}
to global optimality}
\label{alg:Alg-1} 
\end{algorithm}

\paragraph{Details of the BnB-PEP Algorithm.}

Stage 1 computes a feasible solution of \eqref{eq:BnB-PEP-Preli}
by taking advantage of the structure that \eqref{eq:worst-case-pfm-dual-1}
is a convex SDP when the FSFOM is fixed. We set the stepsize to represent
gradient descent (GD)
\[
x_{i}=x_{i-1}-\frac{1}{L}\nabla f(x_{i-1}),\quad i\in[1:N]%
\]
which is a suboptimal but reasonable algorithm. By Lemma \ref{Reparametrization-lemma},
this corresponds to $\alpha_{i,i-1}^{\textrm{init}}=1$ for $i\in[1:N]$
and $\alpha_{i,j}^{\textrm{init}}=0$ for $j\neq i-1$. %

Stage 2 computes a locally optimal solution to \eqref{eq:BnB-PEP-Preli}
using an interior-point algorithm, warm-starting at the feasible solution corresponding to GD that was
produced by Stage 1. When a good warm-starting point is provided,
interior-point algorithms can quickly converge to a locally optimal
solution \cite{byrd1997local,wachter2006implementation}, \cite[$\mathsection$3.3]{fiacco1990nonlinear}.
If the interior-point algorithm fails to converge, we return the
feasible solution from Stage 1. Fortunately, we observe that Stage
2 consistently provides a locally optimal solution.

Stage 3 computes a globally optimal solution using a customized spatial
branch-and-bound algorithm, warm-starting at the solution produced
by Stage 2. We detail our BnB-PEP-QCQP-specific customization in $\mathsection$\ref{sec:Efficient-implementation-and-enhancement}.
A good warm-starting point provides a tight upper bound of the optimal
value and, therefore, significantly accelerates the spatial branch-and-bound
algorithm of Stage 3. In our experience, Stage 2 often provides an
excellent, even nearly optimal, warm-starting point.

In principle, one can simply use an off-the-shelf implementation of
spatial branch-and-bound algorithm such as \texttt{Gurobi} \cite{Gurobi}
to solve \eqref{eq:BnB-PEP-Preli}. Spatial branch-and-bound algorithms
do compute globally optimal solutions in ``finite time'', but the
default implementation is impractically slow, as Table~\ref{tab:timing-bnb-pep-grad-red-FmuL}
illustrates. Customizing the spatial branch-and-bound algorithm with
problem-specific insights is essential, as discussed in $\mathsection$\ref{sec:Efficient-implementation-and-enhancement}.

\paragraph{Numerical results.}

We conduct numerical experiments on the computational environment described
in $\mathsection$\ref{subsec:computational_setup} with parameters $\mu=0.1$, $L=1$, and $R=1$. {
Due to the scale invariance discussed in \cite[$\mathsection$3.5]{taylor2017smooth}, it suffices to solve the BnB-PEP-QCQP for $L=1$ and $R=1$ and find the corresponding optimal stepsize vector $\alpha^\star$ (or $h^\star$) and the associated optimal worst-case performance measure $\| \nabla f(x_N) \|^2$. 
More specifically, for any other $L>0$ and $R>0$, the new optimal stepsize vector will be scaled as $\alpha^\star/L$ (or $h^\star/L$) with corresponding performance measure scaled as $L^2 R^2 \| \nabla f(x_N) \|^2$. The homogeneity relations for other performance measures and initial conditions can be found at \cite[$\mathsection$3.5]{taylor2017smooth} and \cite[$\mathsection$4.2.5]{taylor2017convex}.\footnote{The journal version of \cite[$\mathsection$3.5]{taylor2017smooth} contained a typo in the homogeneity relations, which was later corrected in a subsequent arXiv update \url{https://arxiv.org/pdf/1502.05666.pdf}.}}      
Tables~\ref{tab:BnB-PEP-result-1} and \ref{tab:Globally-optimal-stepsize-FmuL-grad-red}
present the results of the BnB-PEP Algorithm. The optimal algorithm indeed
outperforms other known algorithms in terms of the worst-case performance measure $\|\nabla{f(x_N)}\|^2$ with initial condition $\|x_0 - x_\star \| \leq R^2$. Table~\ref{tab:timing-bnb-pep-grad-red-FmuL}
presents runtimes of the BnB-PEP Algorithm. The BnB-PEP-QCQP can be solved in a practical time scale
with the BnB-PEP Algorithm, but only when we use the customized spatial branch-and-bound
solver of $\mathsection$\ref{sec:Efficient-implementation-and-enhancement}.

A notable empirical observation is that the FSFOM produced by Stage
2, expected to be locally optimal, is often globally optimal or near-optimal.
In this case, Stage 3 serves mainly to certify the global optimality
of the warm-starting solution of Stage 2. This fortuitous behavior
was observed consistently in our experiments of $\mathsection$\ref{sec:applications}
as well. Because Stages 1 and 2 tend to be faster than Stage 3 and
because the output of Stage 2 is often globally optimal, one can use
the output of Stage 2 as a heuristic without running Stage 3.
This can be useful when the goal is to obtain a good method, and there is no need to certify that the method is optimal.

Table~\ref{tab:BnB-PEP-result-1} compares the performance of the
optimal method, obtained with the BnB-PEP Algorithm, against {GD (plain gradient desent)}, ITEM \cite{taylor2021optimal}, and
OGM-$\mathcal{F}_{\mu,L}$ \cite{taylor2021optimal}. 
(ITEM and OGM-$\mathcal{F}_{\mu,L}$ are optimal with respect to different performance measures and therefore are suboptimal when the goal is to reduce the gradient magnitude.
The stepsizes of ITEM were taken from page 21
of the first arXiv version of \cite{taylor2021optimal}, and the stepsizes
of OGM-$\mathcal{F}_{\mu,L}$ were taken from \cite[$\mathsection$E.1]{taylor2021optimal}.) We also show the total number of variables and constraints of \eqref{eq:BnB-PEP-Preli} that the BnB-PEP Algorithm works with after the mathematical model described in \texttt{JuMP} gets converted to the \texttt{MathOptInterface} format \cite{Legat2021}, which is the standard data structure for representing optimization models in \texttt{JuMP}. 

Table~\ref{tab:Globally-optimal-stepsize-FmuL-grad-red} shows the globally optimal stepsizes found by the BnB-PEP Algorithm. To clarify, we obtain the optimal $\alpha^{\star}$ from the BnB-PEP Algorithm,
solve for $h^{\star}$ with Lemma~\ref{Reparametrization-lemma},
and present $h^{\star}$ in the table.

\begin{table}
\begin{centering}
\begin{tabular}{>{\centering}b{2em}>{\centering}b{4em}>{\centering}b{4em}>{\centering}b{5em}>{\centering}b{5em}>{\centering}b{5em}>{\centering}b{5em}}
\toprule 
\multirow{2}{2em}{\centering{}{\footnotesize{}$N$}} & \multirow{2}{4em}{\centering{}{\footnotesize{}\# variables}} & \multirow{2}{4em}{\centering{}{\footnotesize{}\# constraints}} & \multicolumn{4}{c}{{\footnotesize{}Worst-case $\|\nabla f(x_{N})\|^{2}$}}\tabularnewline
\cmidrule{4-7} \cmidrule{5-7} \cmidrule{6-7} \cmidrule{7-7} 
 &  &  & {\footnotesize{}Optimal  } & {\footnotesize{} {GD}  } & {\footnotesize{}ITEM } & {\footnotesize{}OGM-$\mathcal{F}_{\mu,L}$}\tabularnewline
\midrule
\midrule 
\centering{}{\footnotesize{}$1$  } & \centering{}{\footnotesize{}$20$} & \centering{}{\footnotesize{}$33$} & {\footnotesize{}$0.1473$  } & {\footnotesize{}$0.2244$  } & {\footnotesize{}$0.6695$  } & {\footnotesize{}$0.2122$}\tabularnewline
\midrule
\centering{}{\footnotesize{}$2$ } & \centering{}{\footnotesize{}$36$} & \centering{}{\footnotesize{}$56$} & {\footnotesize{}$0.0409$  } & {\footnotesize{}$0.0893$  } & {\footnotesize{}$0.3770$  } & {\footnotesize{}$0.0835$}\tabularnewline
\midrule
\centering{}{\footnotesize{}$3$ } & \centering{}{\footnotesize{}$57$} & \centering{}{\footnotesize{}$85$} & {\footnotesize{}$0.0145$  } & {\footnotesize{}$0.0449$  } & {\footnotesize{}$0.1933$  } & {\footnotesize{}$0.0378$}\tabularnewline
\midrule
\centering{}{\footnotesize{}$4$ } & \centering{}{\footnotesize{}$83$} & \centering{}{\footnotesize{}$120$} & {\footnotesize{}$0.005766$  } & {\footnotesize{}$0.0257$  } & {\footnotesize{}$0.0945$  } & {\footnotesize{}$0.0178$}\tabularnewline
\midrule
\centering{}{\footnotesize{}$5$  } & \centering{}{\footnotesize{}$114$} & \centering{}{\footnotesize{}$161$} & {\footnotesize{}$0.002459$  } & {\footnotesize{}$0.0159$  } & {\footnotesize{}$0.0451$  } & {\footnotesize{}$0.0085$}\tabularnewline
\midrule
\centering{}{\footnotesize{}$10$  } & \centering{}{\footnotesize{}$410$} & \centering{}{\footnotesize{}$456$} & {\footnotesize{}$4.89\times 10^{-5}$  } & {\footnotesize{}$2.58 \times 10^{-3}$  } & {\footnotesize{}$1.03 \times 10^{-3}$  } & {\footnotesize{}$1.97 \times 10^{-4}$}\tabularnewline
\midrule
\centering{}{\footnotesize{}$25$  } & \centering{}{\footnotesize{}$2135$} & \centering{}{\footnotesize{}$2241$} & {\footnotesize{}$ 5.42\times 10^{-10}$  } & {\footnotesize{}$5.89 \times 10^{-5}$  } & {\footnotesize{}$5.5 \times 10^{-7}$  } & {\footnotesize{}$7.21 \times 10^{-8}$}\tabularnewline
\bottomrule
\end{tabular}\caption{{Comparison of the optimal method obtained by solving \eqref{eq:BnB-PEP-Preli} with the BnB-PEP Algorithm
against other known methods.
\label{tab:BnB-PEP-result-1}}}
\par\end{centering}
\end{table}

\begin{table}

\begin{centering}
{\footnotesize{}}%
\begin{tabular}{cc}
\toprule 
{\footnotesize{}$N$} & {\footnotesize{}$h^{\star}$}\tabularnewline
\midrule
\midrule 
{\footnotesize{}$1$} & {\footnotesize{}$\left[\begin{array}{c}
1.3837\end{array}\right]$ }\tabularnewline
\midrule
{\footnotesize{}$2$} & {\footnotesize{}$\left[\begin{array}{cc}
1.5018\\
0.0494 & 1.5018
\end{array}\right]$ }\tabularnewline
\midrule
{\footnotesize{}$3$} & {\footnotesize{}$\left[\begin{array}{ccc}
1.5308\\
0.0889 & 1.7229\\
0.0109 & 0.0889 & 1.5308
\end{array}\right]$ }\tabularnewline
\midrule
{\footnotesize{}$4$} & {\footnotesize{}$\left[\begin{array}{cccc}
1.5403\\
0.1038 & 1.7926\\
0.0229 & 0.1751 & 1.7926\\
0.003 & 0.0229 & 0.1038 & 1.5403
\end{array}\right]$ }\tabularnewline
\midrule
{\footnotesize{}$5$} & {\footnotesize{}$\left[\begin{array}{ccccc}
1.5439\\
0.1097 & 1.8187\\
0.0286 & 0.2132 & 1.8842\\
0.0069 & 0.0514 & 0.2132 & 1.8187\\
0.0009 & 0.0069 & 0.0286 & 0.1097 & 1.5439
\end{array}\right]$}\tabularnewline
\midrule
{\footnotesize{}$10$} & {\footnotesize{} $\left[
\begin{array}{cccccccccc}
1.5465 \\
0.1141 & 1.8377 \\
0.033 & 0.2426 & 1.9488\\
0.0107 & 0.0786 & 0.3072 & 1.995\\
0.0036 & 0.0265 & 0.1037 & 0.3357 & 2.0122\\
0.0012 & 0.009 & 0.0352 & 0.114 & 0.3437 & 2.0122\\
0.0004 & 0.003 & 0.0117 & 0.0378 & 0.114 & 0.3357 & 1.995\\
0.0001 & 0.0009 & 0.0036 & 0.0117 & 0.0352 & 0.1037 & 0.3072 & 1.9488 \\
0.0 & 0.0002 & 0.0009 & 0.003 & 0.009 & 0.0265 & 0.0786 & 0.2426 & 1.8377 \\
0.0 & 0.0 & 0.0001 & 0.0004 & 0.0012 & 0.0036 & 0.0107 & 0.033 & 0.1141 & 1.5465 \\
\end{array}
\right]$} \tabularnewline
\midrule
{\footnotesize{}$25$} & {\footnotesize{} See Supplementary Information or \texttt{Github}  repository} \tabularnewline
\bottomrule
\end{tabular}\caption{{Globally optimal stepsizes obtained by solving \eqref{eq:BnB-PEP-Preli} with the BnB-PEP
Algorithm. \label{tab:Globally-optimal-stepsize-FmuL-grad-red}}}
\par\end{centering}
\end{table}

{Table~\ref{tab:timing-bnb-pep-grad-red-FmuL} presents the runtimes
with and without the customized spatial branch-and-bound solver of
$\mathsection$\ref{sec:Efficient-implementation-and-enhancement}.
The off-the-shelf spatial branch-and-bound algorithm of \texttt{Gurobi}
was very slow despite running on the \texttt{MIT Supercloud Computing
Cluster} with 24 Intel-Xeon-Platinum-8260 nodes (has 1152 cores) and
384 GB of RAM running Ubuntu 18.04.6 LTS with Linux 4.14.250-llgrid-10ms
kernel \cite{reuther2018interactive}. On the other hand, our BnB-PEP Algorithm
ran efficiently on both a standard laptop and on \texttt{MIT Supercloud}. For $N=25$, we run both the BnB-PEP Algorithm and the  off-the-shelf \texttt{Gurobi} on the \texttt{MIT Supercloud}. The cases for which the off-the-shelf spatial
branch-and-bound algorithm terminated, the results agreed with the
results of the BnB-PEP Algorithm.}

\begin{table}
\begin{centering}
{\footnotesize{}}%
\begin{tabular}{c>{\centering}p{5em}>{\centering}p{5em}>{\centering}p{5em}>{\centering}p{11em}}
\toprule 
\multirow{2}{*}{{\footnotesize{}Algorithm }} & \multicolumn{3}{c}{{\footnotesize{}BnB-PEP Algorithm runtime}} & \multirow{2}{11em}{\centering{}{\footnotesize{}Off-the-shelf }\texttt{\footnotesize{}Gurobi}{\footnotesize{}
runtime on}
\texttt{\footnotesize{}MIT Supercloud}}\tabularnewline
\cmidrule{2-4} \cmidrule{3-4} \cmidrule{4-4} 
 & \centering{}{\footnotesize{}Stage 1 } & \centering{}{\footnotesize{}Stage 2 } & \centering{}{\footnotesize{}Stage 3} & \tabularnewline
\midrule
\midrule 
{\footnotesize{}$N=1$  } & \centering{}{\footnotesize{}$0.004$ s } & \centering{}{\footnotesize{}$0.130$ s } & \centering{}{\footnotesize{}$0.081$ s  } & \centering{}{\footnotesize{}$5$ h $17$ m}\tabularnewline
\midrule 
{\footnotesize{}$N=2$  } & \centering{}{\footnotesize{}$0.007$ s  } & \centering{}{\footnotesize{}$0.147$ s  } & \centering{}{\footnotesize{}$0.110$ s  } & \centering{}{\footnotesize{}$1$ d $3$ h}\tabularnewline
\midrule 
{\footnotesize{}$N=3$  } & \centering{}{\footnotesize{}$0.007$ s  } & \centering{}{\footnotesize{}$0.153$ s  } & \centering{}{\footnotesize{}$0.512$ s } & \centering{}{\footnotesize{}$4$ d $13$ h}\tabularnewline
\midrule 
{\footnotesize{}$N=4$  } & \centering{}{\footnotesize{}$0.015$ s  } & \centering{}{\footnotesize{}$0.192$ s  } & \centering{}{\footnotesize{}$4.602$ s  } & \centering{}{\footnotesize{}More than a week}\tabularnewline
\midrule 
{\footnotesize{}$N=5$  } & \centering{}{\footnotesize{}$0.017$ s  } & \centering{}{\footnotesize{}$0.330$ s  } & \centering{}{\footnotesize{}$456.685$ s  } & \centering{}{\footnotesize{}More than a week}\tabularnewline
\midrule 
{\footnotesize{}$N=10$  } & \centering{}{\footnotesize{}
$0.26$ s  } & \centering{}{\footnotesize{}$2$ m $37$ s  } & \centering{}{\footnotesize{}
$1$ d $22$ h} & \centering{}{\footnotesize{}Does not finish in 2 weeks}\tabularnewline
\midrule 
{\footnotesize{}$N=25$  } & \centering{}{\footnotesize{}
$3.2$ s  } & \centering{}{\footnotesize{}$6$ m $22$ s  } & \centering{}{\footnotesize{}
$3$ d $10$ h}   & \centering{}{\footnotesize{}Does not finish in 2 weeks}\tabularnewline
\bottomrule
\end{tabular}{\footnotesize\par}
\par\end{centering}
\begin{centering}
 
\par\end{centering}
\caption{{This table compares the runtimes of the BnB-PEP Algorithm executed on a standard laptop with the off-the-shelf spatial
branch-and-bound algorithm of \texttt{Gurobi} executed on \texttt{MIT
Supercloud} for $N=1,\ldots, 5, 10$.
For the case $N=25$, both the BnB-PEP Algorithm and off-the-shelf \texttt{Gurobi} were executed on  \texttt{MIT Supercloud}. 
\label{tab:timing-bnb-pep-grad-red-FmuL}}}

\end{table}

\section{Efficient implementation of the BnB-PEP Algorithm \label{sec:Efficient-implementation-and-enhancement}}

As Table~\ref{tab:timing-bnb-pep-grad-red-FmuL} illustrates, an
off-the-shelf spatial branch-and-bound algorithm applied to BnB-PEP-QCQP
is very slow. In this section, we customize the spatial branch-and-bound
algorithm to exploit specific problem structure and obtain a speedup
that enables us to run the BnB-PEP Algorithm on a laptop.

In $\mathsection$\ref{subsec:Spatial-branch-and-bound-algorithm},
we briefly review the standard spatial branch-and-bound algorithm.
We present the customization that provide significant speedups in
$\mathsection$\ref{subsec:customized-bnb}. In $\mathsection$\ref{subsec:Sparsifier}
we show how to compute the effective index set of the inner-dual-variable and
thereby reduce the size of the BnB-PEP-QCQP without losing optimality.

\subsection{How the spatial branch-and-bound algorithm solves QCQPs \label{subsec:Spatial-branch-and-bound-algorithm}}

We briefly review the standard spatial branch-and-bound algorithm
\cite{Gurobi,locatelli2013global,horst2013global,liberti2008introduction}.
We assume the optimization problem admits a finite optimal value,
as this is the case in the setups we consider.\footnote{The setups of $\mathsection$\ref{sec:QCQO-framework-for-template-problem}
and $\mathsection$\ref{sec:applications} satisfy $p^{\star}<\infty$,
since any FSFOM (such as the method that has all $0$ stepsizes and
therefore does not move) achieves a finite performance measure, and
$0\le p^{\star}$, since the objectives are nonnegative. In general,
however, there could be pathological BnB-PEPs such that $p^{\star}=-\infty$
or $p^{\star}=\infty$.}

The spatial branch-and-bound algorithm uses a divide-and-conquer approach
to solve \eqref{eq:QCQO-template}. The algorithm starts with the
\emph{presolve} phase, solving a linear relaxation of \eqref{eq:QCQO-template}
to obtain valid bounds $l\leq x\leq u$, with $l,u\in\rl^{q}$, that
are satisfied by optimal solutions. Then the algorithm performs \emph{branching},
partitioning the feasible region of \eqref{eq:QCQO-template} into
a finite collection of subregions $F_{1},\ldots,F_{K}$ and considering
the subproblems 
\[
p_{k}^{\star}=\left(\begin{array}{ll}
\underset{x\in\rl^{q}}{\mbox{minimize}} & c^{\top}x+x^{\top}Q_{0}x\\
\textrm{subject to} & a_{i}^{\top}x+x^{\top}Q_{i}x\leq b_{i},\quad i\in[1:m],\\
 & a_{j}^{\top}x+x^{\top}Q_{j}x=b_{j},\quad j\in[m+1:p],\\
 & x\in F_{k},
\end{array}\right)
\]
for $k\in[1:K]$. The best (smallest) among the optimal values $p_{1}^{\star},\ldots,p_{K}^{\star}$
is $p^{\star}$, by definition. The \emph{bounding} part is about
how to efficiently solve these subproblems via solving relaxations
and how to split these subproblems into smaller subproblems if necessary;
we discuss this next. %

The central idea is that, while solving a particular subproblem (also
a QCQP albeit over a smaller region) might be as hard as solving the
original problem, a lower bound and an upper bound of that subproblem
is much easier to solve via linear relaxations. Using this idea, first,
at the root node of the spatial branch-and-bound tree, a linear relaxation
of \eqref{eq:QCQO-template} is constructed and solved, which gives
a lower bound on $p^{\star}$, denoted by $\underline{p}^{\star}$.
The tighter this relaxation, the closer $\underline{p}^{\star}$ is
to $p^{\star}$. In addition to that, the user can \emph{warm-start
}the branch-and-bound algorithm by providing a known initial feasible
solution to \eqref{eq:QCQO-template}, which gives an upper-bound
on $p^{\star},$ denoted by $\overline{p}^{\star}$. Efficient warm-starting
procedure that exploit the problem structure can massively speed up
branch-and-bound-solvers. The branch-and-bound algorithm during its
execution keeps updating $\overline{p}^{\star},\underline{p}^{\star}$.
The difference $\overline{p}^{\star}-\underline{p}^{\star}$ is called
the \emph{gap}, and when this gap is equal to zero (or less than some
tolerance $\epsilon$) at some point of the algorithm, we have found
the globally (near-)optimal value $p^{\star}$ of \eqref{eq:QCQO-template}
along with one (approximately) optimal solution, and the
algorithm is terminated.
We next discuss how the gap is improved over
the course of the algorithm.

Once the subregions have been created, the algorithm picks an active
subregion, say $F_{k}$ (which $k$ to select can be arbitrary, though,
in practice, it is usually done via different heuristics in modern
solvers), and constructs two linear optimization problems on $F_{k}$.
These linear formulations are constructed using the McCormick envelopes
\cite{mccormick1976computability}, which provide lower and upper
bounds for the quadratic objective and constraints in \eqref{eq:QCQO-template},
whereas the the linear constraints in \eqref{eq:QCQO-template} are
kept unaltered. Then three types of linear cuts are added to the linear
optimization problems to remove regions that are certain to not contain
any optimal solutions \cite{sherali1990hierarchy,padberg1989boolean,sherali2002enhancing}.
Solving these linear optimization problems along with the cuts provides
valid lower and upper bounds on the optimal values of \eqref{eq:QCQO-template}
for the active subregion $F_{k}$. Solving these linear optimization
problems on $F_{k}$ leads to one of the three possibilities below: 
\begin{enumerate}
\item If the linear optimization problem associated with the lower bound
is either infeasible or has an objective value greater than the global
upper bound $\overline{p}^{\star}$, then $F_{k}$ cannot contain
the optimal solution to \eqref{eq:QCQO-template}. Hence, without
solving the QCQP on $F_{k}$, we can discard or \emph{prune
}the subregion. Such a pruned subregion becomes a permanent \emph{leaf}
of the branch-and-bound tree. 
\item If both the lower and upper bounds for the subregion are the same,
then without directly solving the QCQP\textsf{ }on $F_{k}$, we have
found this subproblem's optimal solution with optimal value $p_{k}^{\star}$.
This optimal solution on $F_{k}$ is a feasible solution to the main
problem \eqref{eq:QCQO-template}. It is not necessary to branch on this subregion anymore, and it becomes a permanent leaf of the search tree. If the objective value associated with this new feasible solution leads to an improved upper bound $\overline{p}^\star$ compared to the current incumbent, then the feasible solution on $F_k$ becomes the new incumbent solution. Otherwise, updating the incumbent is not necessary and we simply proceed with the search.
\item If 1 or 2 does not happen, then the subregion $F_{k}$ is partitioned
into smaller subregions by branching again, which are then added to
the list of active subregions. 
\end{enumerate}
In addition to that, at any point, the algorithm keeps an updated
value of the lower bound on $\underline{p}^{\star}$ by taking the
minimum of the best objective values of all the current leaf nodes.
On the other hand, the upper bound $\overline{p}^{\star}$ corresponds
to the incumbent solution. As the algorithm explores the active subregions,
the gap $\overline{p}^{\star}-\underline{p}^{\star}$ keeps getting
smaller, and once it is zero or smaller than a certain tolerance $\epsilon$,
we have found the global optimal value $p^{\star}$ of \eqref{eq:QCQO-template}
along with one optimal solution subject to the tolerance, and the
algorithm terminates.

\subsection{Efficient implementation of the spatial branch-and-bound algorithm \label{subsec:customized-bnb}}

We now customize the spatial branch-and-bound algorithm to efficiently
exploit problem structure of the BnB-PEP-QCQP. Our customization of \texttt{Gurobi}'s
branch-and-bound algorithm \cite{Gurobi95} uses solver-independent
\emph{callback functions}, an interface provided by \texttt{JuMP}
\cite{JuMPDunningHuchetteLubin2017}. 

Callback functions are user-defined functions provided to the optimization solver that query or modify the state of the optimization process of a solver. Examples of such callback functions include providing custom heuristics to compute better feasible solutions, changing the default branching decision of the branch-and-bound algorithm, or applying on-demand separators to add new constraints only if they are violated by the current solution.

We discuss the generalization
of our customization of the spatial branch-and-bound algorithm for arbitrary $\mathcal{E}$, $\mathcal{F}$,
and $\mathcal{C}$ in $\mathsection$\ref{subsec:Generalization-of-QCQO-framework}.
We first present the customizations in $\mathsection$\ref{subsec:Compute-bounds},
$\mathsection$\ref{subsec:Computing-tighter-global-bound}, and $\mathsection$\ref{subsec:Custom-heuristic},
and then discuss the observed speedups in $\mathsection$\ref{subsec:ablation}.

\subsubsection{Bounds on optimal solutions\label{subsec:Compute-bounds}}

Branch and bound algorithms require bounds on the optimization variables
to partition the feasible region. If no bound information for a variable
is provided in the original formulation \eqref{eq:QCQO-template},
then the solver obtains a bound by solving a generic linear relaxation
during the presolve phase. However, this bound can be of poor quality
as a generic solver does not have any problem-specific insight, and
a loose bound can cause the solver to waste time in unimportant regions.
We show how to significantly speed up the branch-and-bound algorithm
by exploiting the structure of \eqref{eq:QCQO-template} to obtain
tighter bounds.

\paragraph{Implied linear constraints. }

The constraint $Z=PP^{\top}$ implies that $Z$ is symmetric positive
semidefinite. This in turn implies 
\begin{equation}
\begin{array}{l}
Z=Z^{\top},\\
\textrm{diag}(Z)\geq0,\\
-\frac{Z_{i,i}+Z_{j,j}}{2}\leq Z_{i,j}\leq\frac{Z_{i,i}+Z_{j,j}}{2}.
\end{array}\label{eq:implied_linear_constraints}
\end{equation}
where $\mathrm{diag}(Z)\ge0$ means the $Z_{i,i}\geq0$ for $i\in[1:N+2]$.
To explain, $Z\succeq0$ implies that every $1\times1$ principal
submatrix of $Z$ is positive-semidefinite \cite[Observation 7.1.2]{horn2012matrix},
and this in turn implies the second constraint. Also, $Z\succeq0$
implies that every the $2\times2$ principal submatrix of $Z$ is
positive-semidefinite, and this in turn implies 
\begin{equation}
\lvert Z_{i,j}\rvert\leq\sqrt{Z_{i,i}Z_{j,j}}\Leftrightarrow Z_{i,j}^{2}\leq Z_{i,i}Z_{j,j}\label{eq:soc_constraint}
\end{equation}
for $i,j\in[1:N+2]$. Chaining the AM-GM inequality 
\[
\sqrt{Z_{i,i}Z_{j,j}}\leq\frac{Z_{i,i}+Z_{j,j}}{2},
\]
we get the third constraint.

While these implied constraints are indeed mathematically redundant,
they are algorithmically indispensable as they provide crucial information
that the solver cannot deduce directly. Explicitly incorporating these
implied constraints provides significant speedups. Instead of incorporating the tighter convex second-order cone (SOC) constraint \eqref{eq:soc_constraint}, we opt for the third linear constraint \eqref{eq:implied_linear_constraints} in our BnB-PEP-QCQP formulation. This choice avoids a slowdown in the spatial branch-and-bound algorithm, which solves only \emph{linear} relaxations at each node, as detailed in $\mathsection$\ref{subsec:Spatial-branch-and-bound-algorithm}. The linear relaxations are derived from McCormick convex envelopes, which are constructed without considering underlying convexity and are computationally expensive \cite{locatelli2013global,horst2013global,liberti2008introduction}. Using the SOC constraint \eqref{eq:soc_constraint} would result in the spatial branch-and-bound algorithm treating it as a generic quadratic constraint and spending extra time constructing McCormick convex envelopes for it \cite[pp.\ 10--15]{Gurobi}. Since the positive semi-definiteness of $Z$ is already modeled by the quadratic constraints $Z=PP^{\top}$, and their associated convex envelopes are tighter than the ones for SOC constraints, the additional SOC constraints would ultimately lead to a net slowdown. Conversely, the third constraint in \eqref{eq:implied_linear_constraints} is linear and can be directly incorporated into the linear relaxations at the nodes without any extra processing time. These constraints differ from those automatically generated by the McCormick convex envelopes, ultimately resulting in a speed-up due to their low computational cost and provision of valuable bound information that is not automatically inferred through the McCormick convex envelopes. %

\paragraph{Variable bounds via SDP relaxation of \eqref{eq:BnB-PEP-Preli}.}

Next, we compute bounds $M_{\lambda}$, $M_{\nu}$, $M_{\alpha}$,
and $M_{Z}$ such that 
\begin{equation}
\begin{array}{l}
\lambda_{i,j}\leq M_{\lambda},\quad i,j\in I_{N}^{\star}:i\neq j,\\
\lvert Z_{i,j}\rvert\leq M_{Z},\quad i,j\in[1:N+2],\\
\lvert P_{i,j}\rvert\leq M_{P},\quad i,j\in[1:N+2],\\
\lvert\alpha_{i,j}\rvert\leq M_{\alpha},\quad i\in[1:N],\;j\in[0:i-1],\\
\nu\leq M_{\nu},
\end{array}\label{eq:individual-bound}
\end{equation}
are satisfied by global minimizers of \eqref{eq:BnB-PEP-Preli}.

Let $w=\textrm{vec}(\alpha,\nu,\lambda)$ denote the column vector
stacking the elements of $\alpha$, $\nu$, and $\lambda$. Let $W=ww^{\top}$.
Then we can construct a lifted nonconvex semidefinite representation
of the constraint set of \eqref{eq:BnB-PEP-Preli}, which includes
the nonconvex rank-1 constraint $W=ww^{\top}$ \cite{luo2010semidefinite}.
The specific form is quite tedious, so we present it in $\mathsection$\ref{subsec:Derivation-of-the-rank-1-constraint}
of the appendix. We then relax the rank-1 constraint $W=ww^{\top}$
to an implied convex constraint 
\begin{equation}
W\succeq ww^{\top}\Leftrightarrow\begin{bmatrix}W & w\\
w^{\top} & 1
\end{bmatrix}\succeq0,\label{eq:schur-complement}
\end{equation}
where we have used the Schur complement. Since any feasible (and optimal)
solution of \eqref{eq:BnB-PEP-Preli} must lie in this larger relaxed
convex set, we compute bounds by optimizing over this set as follows.

The feasible point provided by Stage 1 of the BnB-PEP Algorithm establishes
an upper bound $\nu\leq M_{\nu}=\nu^{\textrm{init}}$, since $\nu$
is the scaled objective function. Next, solve 
\begin{equation}
\left(\begin{array}{ll}
\textrm{maximize} & c_{\lambda}M_{\lambda}+c_{Z}M_{Z}+c_{\alpha}M_{\alpha}\\
\textrm{subject to} & \textrm{semidefinite relaxation of }\eqref{eq:BnB-PEP-Preli},\\
 & \textrm{constraint }\eqref{eq:schur-complement},\\
 & \lambda_{i,j}\leq M_{\lambda},\quad i,j\in I_{N}^{\star}:i\neq j,\\
 & \lvert Z_{i,j}\rvert\leq M_{Z},\quad i,j\in[1:N+2],\\
 & \lvert\alpha_{i,j}\rvert\leq M_{\alpha},\quad i\in[1:N],\;j\in[0:i-1],\\
 & \nu\leq\nu^{\textrm{init}},
\end{array}\right)\label{eq:computing_bounds}
\end{equation}
where $\lambda$, $\nu$, $Z$, $\alpha$, $W$, $M_{\lambda} \leq \|\lambda\|_1$, $M_{Z} \leq \tr Z$,
and $M_{\alpha} \leq \|\alpha\|_1$ are the decision variables, with
\[
(c_{\lambda},c_{Z},c_{\alpha})\in\{(1,0,0),(0,1,0),(0,0,1)\}
\]
to obtain $M_{\lambda}$, $M_{Z}$, and $M_{\alpha}$, respectively.\footnote{As a note of caution, solving \eqref{eq:computing_bounds} with $(c_{\lambda},c_{Z},c_{\alpha})=(1,1,1)$
does not provide a valid bound for all $M_{\lambda}$, $M_{Z}$, and
$M_{\alpha}$; maximizing $M_{\lambda}+M_{Z}+M_{\alpha}$ may reduce
one bound below a valid threshold to increase another bound.} (Since we restrict our search to points satisfying $\nu\leq\nu^{\textrm{init}}$,
our bounds may exclude some suboptimal feasible solutions. However,
all optimal solutions will satisfy the bound.) Finally, we set $M_{P}=\sqrt{M_{Z}}$
based on 
\begin{align}
P_{i,j}^{2} & \leq\sum_{k=1}^{i}P_{i,k}^{2}=Z_{ii}\leq M_{Z}
\label{eq:bound_on_P}
\end{align}
for all $i,j\in[1:N+2]$.

To clarify, this approach is a relaxation in the sense that it is guaranteed to produce variable bounds that will include all globally optimal solutions. (However, there is no guarantee on the tightness of the bounds, so the bounds could be very loose and not useful.)

Besides computing valid bounds on the variables, we also investigated the quality of the solutions of the SDP relaxations, for this setup and all other examples in this paper. Unfortunately, we found that the solutions of the SDP relaxations to be of very poor quality in every case: the optimal value of the SDP relaxation was far from the optimal value of the BnB-PEP-QCQP. Additionally, we observed that the SDP relaxations failed to generate feasible solutions for the underlying BnB-PEP-QCQPs, even when we considered a rank-1 projection of the solution matrix. In other words, it was not possible to reconstruct valid first-order methods from the solutions of the SDP relaxations.

\paragraph{Heuristic bounds.}

However, the SDP relaxation to compute the variable bounds is quite
cumbersome. Therefore, we present a simpler alternative, a heuristic
that estimates the variable bounds based on the Stage 2 solution.

The premise of the heuristic is as follows. First, we make the informal
assumption that the Stage 2 solution is near-optimal, which, again,
happened very often in our experiments. In $\mathsection$\ref{subsec:Sparsifier},
we discuss that optimal inner-dual variables $\lambda=\{\lambda_{i,j}\}_{i,j\in I_{N}^{\star}:i\neq j}$
and $Z$ are sometimes not unique and that sparse $\lambda$ and low-rank
$Z$ are more valuable. Following the literature on sparse signal
processing \cite[$\mathsection$2]{hastie2019statistical}, we promote
sparsity of $\lambda$ by reducing its $\ell_{1}$-norm $\sum_{i,j\in I_{N}^{\star}:i\neq j}\lambda_{i,j}$
and low rank of $Z$ by reducing its nuclear norm $\tr Z$. To do
so, we need our variable bounds to include the global solutions with
the minimum $\ell_{1}$-norm of $\lambda$ and minimum nuclear norm
of $Z$.

Based on the constraint set of \eqref{eq:worst-case-pfm-dual-1}, consider the following convex SDP 
\begin{equation}
\left(\begin{array}{ll}
\textrm{maximize} & c_{\lambda}\sum_{i,j\in I_{N}^{\star}:i\neq j}\lambda_{i,j}+c_{Z}\tr Z\\
\textrm{subject to} & \sum_{i,j\in I_{N}^{\star}:i\neq j}\lambda_{i,j}a_{i,j}=0,\\
 & \nu B_{0,\star}-C_{N,\star}-\mu^{2}B_{N,\star}(\alpha^{\textrm{lopt}})+2\mu A_{\star,N}(\alpha^{\textrm{lopt}})+\\
 & \quad\sum_{i,j\in I_{N}^{\star}:i\neq j}\lambda_{i,j}\left(A_{i,j}(\alpha^{\textrm{lopt}})+\frac{1}{2(L-\mu)}C_{i,j}\right)=Z,\\
 & Z\succeq0,\\
 & \lambda_{i,j}\geq0,\quad i,j\in I_{N}^{\star}:i\neq j,\\
 & \nu\geq0,\\
 & \nu R^{2}\leq\nu^{\textrm{lopt}}R^{2},
\end{array}\right)\label{eq:find-sparse-sol-1}
\end{equation}
where $\nu$, $\lambda$, and $Z$ are the decision variables, $(c_{\lambda},c_{Z})\in\{(1,0),(0,1)\}$,
and $\alpha^{\textrm{lopt}}$ are $\nu^{\textrm{lopt}}$ are set to
be values from the Stage 2 solution. Let $\widetilde{M}$ be a user-defined
parameter greater than $1$. With $(c_{\lambda},c_{Z})=(1,0)$, we
\emph{maximize} the $\ell_{1}$ norm of $\lambda$ and get $\lambda^{\textrm{hrstc}}$.
Set 
\[
M_{\lambda}=\widetilde{M}\max_{i,j\in I_{N}^{\star}:i\neq j}\{\lambda_{i,j}^{\textrm{hrstc}}\}.
\]
With $(c_{\lambda},c_{Z})=(0,1)$, we \emph{maximize} the nuclear
norm of $Z$ and get $Z^{\textrm{hrstc}}$. Set 
\[
M_{Z}=\widetilde{M}\max_{i\in[1:N+2]}\{Z_{i,i}^{\textrm{hrstc}}\}
\]
based on the reasoning that \eqref{eq:implied_linear_constraints}
implies that every entry of $Z$ is bounded by the maximum of the
diagonal entries. Set $M_{P}$ from $M_{Z}$ using \eqref{eq:bound_on_P}.
Set 
\[
M_{\alpha}= 5\widetilde{M}\max_{0\le j<i\le N}\{\alpha_{i,j}^{\textrm{lopt}}\}.
\]

A note of caution is that when the Stage 2 solution is far from optimal,
it is unclear whether this heuristic is even likely to produce a
valid bound. When the Stage 2 solution is, in fact, near-optimal,
the heuristic should help the BnB-PEP Algorithm to find the globally optimal
solution quickly, and this is what we observe in our experiments.

Another note of caution is that the heuristic fails silently when
it fails; there is no reliable mechanism to detect whether the heuristic
bounds include or exclude global solutions. After solving \eqref{eq:BnB-PEP-Preli}
using the bounds, we verify if the solution lies within the interior
of those bounds. Furthermore, empirically we always found that the solutions computed using the heuristic-based bound were well within the interior of the imposed bounds, though this is not a guarantee that the associated solution is globally optimal, as there is a possibility that a strictly better
global solution lies far outside of the boundary since the BnB-PEP-QCQP is nonconvex. Finally, in
all our experiments, we additionally verified that the heuristic-based bound produced the same optimal solutions as the SDP-based bounds. 

{
\textbf{Remark.}
We clarify that the heuristic bound offers no guarantee of correctness and that the SDP relaxation, which is guaranteed to be correct, is the superior choice. However, SDP relaxations can be cumbersome to formulate and implement. Therefore, one may first try out the heuristic bound in a prototyping phase and then decide to implement the SDP relaxation if the preliminary results are sufficiently interesting.
}

\subsubsection{Tighter lower bounds via lazy callback \label{subsec:Computing-tighter-global-bound}}

When an incumbent or warm-starting solution is already near-optimal,
i.e., when the upper bound is already good, the work in certifying
global optimality mostly lies in improving the lower bound. Indeed,
in our experiments, Stage 2 of the BnB-PEP Algorithm consistently found near-optimal solutions, and Stage 3 spent most of its time improving
the lower bound to certify or polish the solution of Stage 2.
If we can compute a good lower bound and provide it to the spatial branch-and-bound algorithm, Stage 3 can terminate very quickly as the number of subregions to be explored is substantially reduced. To that goal, we compute a tighter lower bound of \eqref{eq:BnB-PEP-Preli} via the \emph{lazy constraint callback method}. Unlike normal constraints, lazy constraints are not generated upfront but are rather generated and added one by one when needed.

{Consider a variant of \eqref{eq:BnB-PEP-Preli}, where we model $Z=PP^{\top} \Leftrightarrow Z \succeq 0$ equivalently as
\[
\tr\left(Zyy^{\top}\right)\geq0,\qquad \forall\,y\in \rl^{N+2}.
\]
Since this formulation uses an infinite set of linear constraints,
we relax it with a finite set of linear constraints
\begin{equation}
\tr\left(Zyy^{\top}\right)\geq0,\qquad \forall\,y\in Y,\label{eq:starting-set-lazy}
\end{equation}
where $Y$ is initialized to be a randomly generated
set of $2(N+2)^{2}$ unit vectors in $\rl^{N+2}$ following the prescription
of \cite[$\mathsection$5.1]{benson2003solving}.  
In \eqref{eq:BnB-PEP-Preli}, we relax $Z=PP^{\top}$ into the constraint \eqref{eq:starting-set-lazy} and obtain a simpler QCQP.} Then, update $Y$ lazily by
repeating the following steps (i)--(iii) a finite number of times ($1\times10^{6}$ times in our implementation): 
\begin{itemize}
\item[(i)] Solve the relaxation of \eqref{eq:BnB-PEP-Preli}, where \eqref{eq:starting-set-lazy} is used instead of $Z=PP^{\top}$, to obtain $Z$
and a lower bound. 
\item[(ii)] Find the minimum eigenvalue $\textrm{eig}_{\textrm{min}}(Z)$ and
corresponding normalized eigenvector $u$ of $Z$. If $\textrm{eig}_{\textrm{min}}(Z) \geq 0$,
terminate. 
\item[(iii)] If $\textrm{eig}_{\textrm{min}}(Z) < 0$, then add $u$ to $Y$, i.e., add the constraint $\tr(Zuu^{\top})\geq0$.
(Note, $\tr(Zuu^{\top})<0$. So the added constraint makes the current
$Z$ infeasible for the updated relaxation \eqref{eq:starting-set-lazy}.) 
\end{itemize}
In step (ii), if $\textrm{eig}_{\textrm{min}}(Z)\ge0$, then the solution
$Z$ of the relaxation \eqref{eq:starting-set-lazy} is in fact optimal
for the original unrelaxed problem, so we terminate. We use the lazy
constraint callback interface of \texttt{JuMP} to implement this scheme.
After adding one additional linear constraint in step (iii), updating
the solution in step (i) is efficient since \texttt{Gurobi} and all
modern solvers based on the simplex algorithm can quickly update a
solution when one linear constraint is added \cite[pp.\ 205--207]{bertsimas1997introduction}.

\subsubsection{Improved upper bounds via SDP solves \label{subsec:Custom-heuristic}}

As a heuristic to obtain improve upper bounds, we utilize the fact
that the optimization of \eqref{eq:BnB-PEP-Preli} reduces to an SDP
when the stepsize $\alpha$ is fixed. This is a structure that the
branch-and-bound solver cannot infer by itself.

When the branching process reaches a new node, we access (via a callback
feature of \texttt{JuMP}) the solution $(\alpha^{\textrm{rlx}},\nu^{\textrm{rlx}},\lambda^{\textrm{rlx}},Z^{\textrm{rlx}},P^{\textrm{rlx}})$
of the relaxation and quantify its infeasibility with 
\begin{align*}
 & \textrm{merit}(\alpha^{\textrm{rlx}},\nu^{\textrm{rlx}},\lambda^{\textrm{rlx}},Z^{\textrm{rlx}},P^{\textrm{rlx}})\\
= & \|\sum_{i,j\in I_{N}^{\star}:i\neq j}\lambda_{i,j}^{\textrm{rlx}}a_{i,j}\|_{\infty}+\left\Vert \nu^{\textrm{rlx}}B_{0,\star}-C_{N,\star}-\mu^{2}B_{N,\star}(\alpha^{\textrm{rlx}})+2\mu A_{\star,N}(\alpha^{\textrm{rlx}})\right\Vert _{\infty}\\
 & +\lvert\textrm{min}\{\textrm{eig}_{\textrm{min}}(Z^{\textrm{rlx}}),0\}\rvert,
\end{align*}
where $\textrm{eig}_{\textrm{min}}(Z^{\textrm{rlx}})$ is the minimum
eigenvalue of $Z^{\textrm{rlx}}$. If 
\[
\textrm{merit}(\alpha^{\textrm{rlx}},\nu^{\textrm{rlx}},\lambda^{\textrm{rlx}},Z^{\textrm{rlx}},P^{\textrm{rlx}})\leq\epsilon,
\]
then we fix the stepsize in \eqref{eq:worst-case-pfm-dual-1} to $\alpha^{\textrm{rlx}}$
and solve the convex SDP. (We take $\epsilon=0.01$ in our implementation.)
We submit the solution to the SDP as a heuristic solution (via a callback
feature of \texttt{JuMP}). If the heuristic solution improves the
best upper bound $\overline{p}^{\star}$, then it is accepted by the
solver, else it is rejected.

\subsubsection{Numerical evaluation of the customizations \label{subsec:ablation}}

In our experiments, we found that that the customization of $\mathsection$\ref{subsec:Compute-bounds}
provided the largest speedups, $\mathsection$\ref{subsec:Computing-tighter-global-bound}
substantial speedups, and $\mathsection$\ref{subsec:Custom-heuristic}
no speedups. We further describe our observations here.

\paragraph{Variable bounds of $\mathsection$\ref{subsec:Compute-bounds}.}

Tables~\ref{tab:BnB-PEP-bounds} and \ref{tab:BnB-PEP-bounds-heuristic}
show the bounds obtained through the SDP relaxation. As an aside,
we found that these valid bounds substantially improve not only the
branch-and-bound algorithm of Stage 3, but also the local solve of
Stage 2.

\begin{table}
\begin{centering}
\begin{tabular}{ccccccc}
\toprule 
\multicolumn{1}{c}{{\footnotesize{}$N$}} & {\footnotesize{}$M_{\lambda}$}  & {\footnotesize{}$M_{\alpha}$}  & {\footnotesize{}$M_{Z}$}  & {\footnotesize{}$M_{P}$ }  & {\footnotesize{}$M_{\nu}$ }  & {\footnotesize{}Runtime (s)}\tabularnewline
\midrule
\midrule 
{\footnotesize{}$1$}  & {\footnotesize{}$1.00$ }  & {\footnotesize{}$2.00$ }  & {\footnotesize{}$1.00$ }  & {\footnotesize{}$1.00$ }  & {\footnotesize{}$0.2244$ }  & {\footnotesize{}$0.068$}\tabularnewline
\midrule 
{\footnotesize{}$2$ }  & {\footnotesize{}$1.00$ }  & {\footnotesize{}$4.5175$ }  & {\footnotesize{}$1.00$ }  & {\footnotesize{}$1.00$ }  & {\footnotesize{}$0.0893$ }  & {\footnotesize{}$0.181$}\tabularnewline
\midrule 
{\footnotesize{}$3$ }  & {\footnotesize{}$1.00$ }  & {\footnotesize{}$3.672$ }  & {\footnotesize{}$1.00$ }  & {\footnotesize{}$1.00$ }  & {\footnotesize{}$0.0449$ }  & {\footnotesize{}$0.736$}\tabularnewline
\midrule 
{\footnotesize{}$4$ }  & {\footnotesize{}$1.00$ }  & {\footnotesize{}$3.5166$ }  & {\footnotesize{}$1.00$ }  & {\footnotesize{}$1.00$ }  & {\footnotesize{}$0.0257$ }  & {\footnotesize{}$3.173$}\tabularnewline
\midrule 
{\footnotesize{}$5$ }  & {\footnotesize{}$1.00$ }  & {\footnotesize{}$3.7919$ }  & {\footnotesize{}$1.00$ }  & {\footnotesize{}$1.00$ }  & {\footnotesize{}$0.0159$ }  & {\footnotesize{}$11.380$}\tabularnewline
\bottomrule
\end{tabular}
\par\end{centering}
\begin{centering}
\par\end{centering}
\caption{ Valid bounds on the decision variables in \eqref{eq:BnB-PEP-Preli}
obtained via the SDP relaxation of \eqref{eq:computing_bounds}. The
runtime describes to the total time spent compute all the bounds of
the row. \label{tab:BnB-PEP-bounds}}

\end{table}

\begin{table}
\begin{centering}
{\footnotesize{}{}}%
\begin{tabular}{ccccccc}
\toprule 
\multicolumn{1}{c}{{\footnotesize{}{}$N$}} & {\footnotesize{}{}$M_{\lambda}$ }  & {\footnotesize{}{}$M_{\alpha}$ }  & {\footnotesize{}{}$M_{Z}$ }  & {\footnotesize{}{}$M_{P}$ }  & {\footnotesize{}{}$M_{\nu}$ }  & {\footnotesize{}{}Runtime (s)}\tabularnewline
\midrule
\midrule 
{\footnotesize{}{}$1$ }  & {\footnotesize{}{}$0.8789$ }  & {\footnotesize{}{}$7.6105$ }  & {\footnotesize{}{}$0.4233$ }  & {\footnotesize{}{}$0.6506$ }  & {\footnotesize{}{}$0.1473$ }  & {\footnotesize{}{}$0.082$}\tabularnewline
\midrule 
{\footnotesize{}{}$2$ }  & {\footnotesize{}{}$0.9504$ }  & {\footnotesize{}{}$8.2597$ }  & {\footnotesize{}{}$0.1934$ }  & {\footnotesize{}{}$0.4397$ }  & {\footnotesize{}{}$0.0409$ }  & {\footnotesize{}{}$0.093$}\tabularnewline
\midrule 
{\footnotesize{}{}$3$ }  & {\footnotesize{}{}$0.9767$ }  & {\footnotesize{}{}$9.4761$ }  & {\footnotesize{}{}0.1009 }  & {\footnotesize{}{}$0.3177$ }  & {\footnotesize{}{}$0.0145$ }  & {\footnotesize{}{}$0.105$}\tabularnewline
\midrule 
{\footnotesize{}{}$4$ }  & {\footnotesize{}{}$0.9853$ }  & {\footnotesize{}{}$9.8591$ }  & {\footnotesize{}{}$0.0599$ }  & {\footnotesize{}{}$0.2448$ }  & {\footnotesize{}{}$0.005766$ }  & {\footnotesize{}{}$0.114$}\tabularnewline
\midrule 
{\footnotesize{}{}$5$ }  & {\footnotesize{}{}$0.9886$ }  & {\footnotesize{}{}$10.3633$ }  & {\footnotesize{}{}$0.0383$ }  & {\footnotesize{}{}$0.1958$ }  & {\footnotesize{}{}$0.002459$ }  & {\footnotesize{}{}$0.121$}\tabularnewline
\bottomrule
\end{tabular}
\par\end{centering}
\begin{centering}
 
\par\end{centering}
\caption{Heuristic bounds on the decision variables in \eqref{eq:BnB-PEP-Preli}
with $\widetilde{M} = 1.01$. The runtime describes the total time
spent compute all the bounds of the row. Compared to the results of
Table~\ref{tab:BnB-PEP-bounds} the bounds tend to be tighter, the
runtime is faster, and the implementation is much simpler. However,
there is no theoretical guarantee that the bounds are valid. \label{tab:BnB-PEP-bounds-heuristic}}

\end{table}

\paragraph{Tighter lower bound of $\mathsection$\ref{subsec:Computing-tighter-global-bound}.}

Table~\ref{tab:Lower-bound-BnB-PEP} shows the lower-bounds for \eqref{eq:BnB-PEP-Preli}
computed from the lazy constraint callback method. The customization
produces a high quality lower bound, which, combined with the near-optimal solution of Stage 2 of the BnB-PEP Algorithm, enables the branch-and-bound
algorithm to terminate quickly.

\begin{table}
\begin{centering}
\begin{tabular}{ccccc}
\toprule 
\multicolumn{1}{c}{{\footnotesize{}{}$N$}} & {\footnotesize{}{}$\underline{p}^{\star}$ }  & {\footnotesize{}{}$\overline{p}^{\star}$ }  & {\footnotesize{}{}$\overline{p}^{\star}-\underline{p}^{\star}$ }  & {\footnotesize{}{}Runtime (s)}\tabularnewline
\midrule
\midrule 
{\footnotesize{}{}$1$ }  & {\footnotesize{}{}$0.1432$}  & {\footnotesize{}{}$0.1473$ }  & {\footnotesize{}{}$0.0041$ }  & {\footnotesize{}{}$0.135$}\tabularnewline
\midrule 
{\footnotesize{}{}$2$ }  & {\footnotesize{}{}$0.0374$ }  & {\footnotesize{}{}$0.0409$ }  & {\footnotesize{}{}$0.0035$ }  & {\footnotesize{}{}$0.232$}\tabularnewline
\midrule 
{\footnotesize{}{}$3$ }  & {\footnotesize{}{}$0.0121$ }  & {\footnotesize{}{}$0.0145$ }  & {\footnotesize{}{}0.0024 }  & {\footnotesize{}{}$2.550$}\tabularnewline
\midrule 
{\footnotesize{}{}4 }  & {\footnotesize{}{}$0.00178$ }  & {\footnotesize{}{}$0.005766$ }  & {\footnotesize{}{}$0.003986$ }  & {\footnotesize{}{}$72.7$}\tabularnewline
\midrule 
{\footnotesize{}{}5 }  & {\footnotesize{}{}$0.000517$ }  & {\footnotesize{}{}$0.002459$ }  & {\footnotesize{}{}$0.001941$ }  & {\footnotesize{}{}$336.341$}\tabularnewline
\bottomrule
\end{tabular}
\par\end{centering}
\begin{centering}
 
\par\end{centering}
\caption{Lower bound $\underline{p}^{\star}$ of \eqref{eq:BnB-PEP-Preli}
computed from the lazy constraint callback method. The upper bound
$\overline{p}^{\star}$ is the objective value from Stage 2 of the
BnB-PEP Algorithm. \label{tab:Lower-bound-BnB-PEP}}

\end{table}

\paragraph{Improved upper bound of $\mathsection$\ref{subsec:Custom-heuristic}.}

In our experiments, the submitted upper bounds were all rejected by
the solver and therefore provided no speedup. This is not surprising,
as it is likely due to the warm-starting solution from Stage 2 being
near-optimal. To verify this hypothesis, we ran Stage 3 without Stage
2. In this case, the submitted heuristic solution was often accepted
by the solver, but the overall performance was slow as Stage 3 started
from a poor warm-starting solution. We recommend that users of the
BnB-PEP Algorithm always perform Stage 2 before Stage 3. However, when the warm-starting solution is not near-optimal, we can expect this customization
to provide a speedup.

\subsection{Structured inner-dual variables
\label{subsec:Sparsifier}}

The family solutions to \eqref{eq:BnB-PEP-Preli} with $N=1,2,\dots$ exhibits an exploitable structure: the optimal $\lambda^\star$ is sparse and the optimal $Z^\star$ is low-rank.
A computational benefit of this structure is that it reduces the problem size of the BnB-PEP-QCQP.
A theoretical benefit is that the structured inner-dual variable corresponds to simpler and therefore more analytically tractable proofs, which we seek in  $\mathsection$\ref{subsec:pot-bnb-pep-wcvx-Moreau}.

In this section, we describe a heuristic strategy for identifying such structure. The general idea is to solve the problem exactly for smaller values of $N$, say $N=1,\dots,5$, and infer the pattern. This heuristic is based on the expectation that the observed patterns will continue to hold for $N=6,7,\dots$.

\paragraph{Sparsity pattern of $\lambda$.}
Denote the support of $\lambda^{\star}$ as
\[
\mathrm{supp}(\lambda^\star)=\{(i,j)\mid i,j\in I_{N}^{\star},i\neq j,\;\lambda_{i,j}^{\star}>0\}.
\]
(Note that $\lambda_{i,j}^{\star}\ge 0$ for all $i,j$.)
If we know $\mathrm{supp}(\lambda^\star)$ in advance, then we can simplify \eqref{eq:BnB-PEP-Preli} by replacing both instances of
\[
\sum_{i,j\in I_{N}^{\star}:i\neq j}\lambda_{i,j}(\cdots)
\]
with
\[
\sum_{(i,j)\in\mathrm{supp}(\lambda^\star)}\lambda_{i,j}(\cdots)
\]
and obtain a smaller QCQP.

First solve \eqref{eq:BnB-PEP-Preli} for $N=1,\dots,5$ using the BnB-PEP Algorithm.
At this point, solutions may already reveal their pattern in $\mathrm{supp}(\lambda^\star)$.
However, optimal inner-dual variables for a given FSFOM are not always unique (see \cite{TaylorBlog20} or Table~\ref{tab:lambda-non-unique}), and, if so, the solution returned by the spatial branch-and-bound solver will likely not be a sparse one.
Therefore, following the literature on sparse signal processing \cite[$\mathsection$2]{hastie2019statistical}, we promote sparsity of $\lambda$ by reducing its $\ell_1$-norm%
\footnote{
One could consider further advanced approaches for promoting sparsity such as \cite{CandesWakinBoyd2008_enhancing}.
}
as follows
\begin{equation}
\left(\begin{array}{ll}
\textrm{minimize} & \|\lambda\|_{1}=\sum_{i,j\in I_{N}^{\star}:i\neq j}\lambda_{i,j}\\
\textrm{subject to} & \nu R^{2}\leq p^{\star},\\
 & \sum_{i,j\in I_{N}^{\star}:i\neq j}\lambda_{i,j}a_{i,j}=0,\\
 & \nu B_{0,\star}-C_{N,\star}-\mu^{2}B_{N,\star}(\alpha^{\star})+2\mu A_{\star,N}(\alpha^{\star})+\\
 & \quad\sum_{i,j\in I_{N}^{\star}:i\neq j}\lambda_{i,j}\left(A_{i,j}(\alpha^{\star})+\frac{1}{2(L-\mu)}C_{i,j}\right)=Z,\\
 & Z\succeq0,\\
 & \lambda_{i,j}\geq0,\quad i,j\in I_{N}^{\star}:i\neq j,\\
 & \nu\geq0,
\end{array}\right)\label{eq:find-sparse-sol}
\end{equation}
where $\nu$, $\lambda$, and $Z\in\mathbb{S}_{+}^{N+2}$ are the decision variables.
The constraint set of \eqref{eq:find-sparse-sol} is almost identical to \eqref{eq:worst-case-pfm-dual-1}, except we impose the constraint $\nu R^{2}\leq p^{\star}$ and fix $\alpha^{\star}$ to the optimal stepsize computed with the BnB-PEP Algorithm.
This way, our search is confined to the set of optimal solutions to \eqref{eq:BnB-PEP-Preli} and, with the FSFOM fixed, the problem is efficiently solved as an SDP.
Denote the solution to \eqref{eq:find-sparse-sol} by $\{\nu^{\star},\lambda^{\star,\textrm{sparse}},Z^{\star,\textrm{sparse}}\}$.
Hopefully, the optimal $\lambda^{\star,\textrm{sparse}}$ for $N=1,\dots,5$ are sparse and their structure reveals a pattern.

\begin{table}

\centering{}{\footnotesize{}}%
\begin{tabular}{>{\centering}p{2em}>{\centering}p{9em}>{\centering}p{9em}}
\toprule 
\centering{}{\footnotesize{}$N$} & \centering{}{\footnotesize{}Optimal $\lambda$ with minimum $\ell_{1}$
norm} & \centering{}{\footnotesize{}Optimal $\lambda$ with maximum $\ell_{1}$
norm}\tabularnewline
\midrule
\midrule 
\centering{}{\footnotesize{}1} & \centering{}{\footnotesize{}$2.642$} & \centering{}{\footnotesize{}$3.594$}\tabularnewline
\midrule 
\centering{}{\footnotesize{}2} & \centering{}{\footnotesize{}$2.434$} & \centering{}{\footnotesize{}$3.114$}\tabularnewline
\midrule 
\centering{}{\footnotesize{}3} & \centering{}{\footnotesize{}$2.369$} & \centering{}{\footnotesize{}$2.925$}\tabularnewline
\midrule 
\centering{}{\footnotesize{}4} & \centering{}{\footnotesize{}$2.339$} & \centering{}{\footnotesize{}$2.823$}\tabularnewline
\midrule 
\centering{}{\footnotesize{}5} & \centering{}{\footnotesize{}$2.320$} & \centering{}{\footnotesize{}$2.757$}\tabularnewline
\bottomrule
\end{tabular}\caption{Solutions to \eqref{eq:find-sparse-sol} with minimum and maximum $\ell_1$-norm on optimal $\lambda$ for the setup of $\mathsection$\ref{sec:QCQO-framework-for-template-problem} and $\mathsection$\ref{sec:Efficient-implementation-and-enhancement}.
The gap demonstrates that the optimal inner-dual variable is not unique and therefore that the $\ell_1$-norm minimization is necessary for obtaining a sparse solution. \label{tab:lambda-non-unique}}
\end{table}

\paragraph{Low rank of $Z$.}
When $r=\mathrm{rank}(Z^\star)$ with $r<n$, then we can use the factorization $Z=PP^\top$, where $P\in \rl^{n\times n}$ has $n-r$ columns constrained to be zero. Such constraints significantly reduce the effective size of the QCQP.

\begin{lem}[{\cite[Theorem 10.9]{higham2002accuracy}\label{Lem:Cholesky-decomposition-of-low-rank}}]
A matrix $Z\in\mathbb{S}^{n}$ is positive semidefinite with rank $r\leq n$ if and only if it has a Cholesky factorization $Z=PP^{\top}$, where $P\in \rl^{n\times n}$ is lower-triangular, has $r$ positive diagonal entries, and $n-r$ columns containing all zero.
\end{lem}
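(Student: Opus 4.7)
The plan is to prove the two directions separately, using Lemma~\ref{Lem:quadratic-characterization-psd-1} (the full-rank Cholesky case) as a template and extending it to handle rank deficiency.

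The reverse direction is straightforward. Suppose $Z=PP^\top$ with $P$ lower-triangular having $r$ positive diagonal entries and $n-r$ zero columns. Then for any $y\in\rl^n$, $y^\top Z y=\|P^\top y\|^2\ge 0$, so $Z\succeq 0$. For the rank count, the $r$ columns of $P$ with positive diagonals are linearly independent by the usual triangular argument (each such column has its leading nonzero entry strictly below the leading nonzero entries of the earlier retained columns), while the remaining $n-r$ columns are zero. Hence $\mathrm{rank}(P)=r$, and $\mathrm{rank}(Z)=\mathrm{rank}(PP^\top)=\mathrm{rank}(P)=r$.

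For the forward direction, the plan is a constructive column-by-column algorithm generalizing the standard Cholesky procedure. Initialize $P=0$, and for $i=1,\ldots,n$ compute $d_i=Z_{ii}-\sum_{k=1}^{i-1}P_{ik}^2$. If $d_i>0$, set $P_{ii}=\sqrt{d_i}$ and $P_{ji}=(Z_{ji}-\sum_{k=1}^{i-1}P_{jk}P_{ik})/P_{ii}$ for $j>i$, as in the classical algorithm. If $d_i=0$, set the entire $i$-th column of $P$ to zero. The main obstacle is verifying correctness of the second branch: we must show that $Z_{ji}-\sum_{k=1}^{i-1}P_{jk}P_{ik}=0$ for every $j>i$, so that zeroing the column preserves the factorization identity. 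The key observation is that the ``residual matrix'' $Z^{(i)}:=Z-\sum_{k=1}^{i-1}P_{\cdot,k}P_{\cdot,k}^\top$, restricted to indices $\ge i$, is PSD (being a Schur complement of a PSD matrix against the already-processed block). If its $(i,i)$-entry $d_i$ vanishes, the $2\times 2$ principal submatrices indexed by $\{i,j\}$ have determinant $-(Z^{(i)}_{ij})^2\ge 0$, forcing $Z^{(i)}_{ij}=0$, which is exactly the required identity.

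It remains to match the rank count. Each $i$ with $d_i>0$ increases the rank of the partial sum $\sum_{k\le i}P_{\cdot,k}P_{\cdot,k}^\top$ by one, while each $i$ with $d_i=0$ leaves it unchanged. Since the algorithm terminates with $\sum_{k=1}^n P_{\cdot,k}P_{\cdot,k}^\top=Z$, the number of positive diagonal entries equals $\mathrm{rank}(Z)=r$, and the other $n-r$ columns are zero by construction. The nontrivial step, as highlighted, is the $d_i=0$ case; aside from that, the argument reduces to the same induction that establishes the standard Cholesky factorization.
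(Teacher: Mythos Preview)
Your proof is correct. The paper does not actually prove this lemma; it is quoted directly from Higham's textbook (Theorem~10.9 there), so there is no in-paper argument to compare against. Your constructive column-by-column argument is essentially the standard one: run the Cholesky recursion, and when a zero pivot $d_i=0$ appears, use positive semidefiniteness of the trailing residual to conclude that the entire $i$-th row and column of the residual vanish, allowing you to set that column of $P$ to zero without breaking the factorization identity.

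One small point of precision: your claim that $Z^{(i)}|_{\{i,\ldots,n\}}$ is PSD ``being a Schur complement of a PSD matrix against the already-processed block'' is literally correct only when all earlier pivots were positive; if some earlier $d_k=0$ has already occurred, the leading processed block is singular and the usual Schur-complement formula does not apply directly. The clean fix is to phrase the PSD property as a one-step induction: if $d_i>0$, then $Z^{(i+1)}|_{\{i+1,\ldots,n\}}$ is the Schur complement of the PSD matrix $Z^{(i)}|_{\{i,\ldots,n\}}$ with respect to its positive $(1,1)$ entry, hence PSD; if $d_i=0$, then $Z^{(i+1)}=Z^{(i)}$ and the restriction to $\{i+1,\ldots,n\}$ is a principal submatrix of a PSD matrix, hence PSD. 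This is the argument you already have in mind, just with the induction made explicit so that the mixed case (some zero pivots interspersed among positive ones) is covered.
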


We solve \eqref{eq:find-sparse-sol} for $N=1,\dots,5$ and infer the rank.
(In principle, one could perform a separate nuclear norm minimization or further advanced approaches such as \cite{Fazel2003} to reduce the rank of $Z$, but this was not necessary in our experiments.)
In our current setup, $Z^{\star,\textrm{sparse}}$ has rank $1$, as Table~\ref{tab:effective_index_set_lambda} indicates.
Other optimized method throughout the literature such as OGM, ITEM, OGM-G \cite{drori2014performance,kim2016optimized,taylor2021optimal,kim2021optimizing} also have corresponding low-rank $Z^\star$.

For $N=6,7,\dots$, we obtain $Z^\star$ and $P^{\star}$ from Stage 2 of the BnB-PEP Algorithm. If we expect a certain value of $r=\mathrm{rank}(Z^\star)$, keep $r$ columns of $P^{\star}$ with the largest magnitude and constrain the remaining $n-r$ columns to be $0$ in the subsequent Stage 3.

\paragraph{Structured inner-dual variables represent simpler proofs.}
A feasible point of the dualized problem \eqref{eq:worst-case-pfm-dual-1} can be interpreted as a convergence proof combining inequalities
\begin{equation}
f_{{i}}\geq f_{{j}}+\langle g_{{j}} \mid
x_{i}-x_{{j}}\rangle+\frac{1}{2(L-\mu)}\|g_{{i}}-g_{{j}}\|^{2}
\label{eq:ineq-coco}
\end{equation}
for $i,j\in I_N^\star$, and the value of $\lambda_{{i},{j}}^{\star}$ corresponds to the value used in forming a weighted combination of the inequalities \cite[$\mathsection$3.3]{ryu2020operator}.
Therefore, $\lambda_{{i},{j}}^{\star}=0$ for some $(i,j)$ is equivalent to not using the corresponding inequality in the convergence proof.
A sparse $\lambda$ corresponds to a proof using fewer inequalities, which tend to be simpler proofs.

On the other hand, $\mathrm{rank}(Z^\star)$ corresponds to the excess quadratic terms arising within a proof.
For convergence proof of FSFOMs of the form,
\[
A\le B-\|c_1\|^2-\dots \|c_r\|^2\le B,
\]
$r=\mathrm{rank}(Z^\star)$ corresponds to the number of quadratic terms $\{\|c_i\|^2\}_{i=1,\dots,r}$, roughly speaking.
Since $\mathrm{rank}(Z^\star)$ corresponds to the number of excess terms to deal with in a proof, an optimal solution $Z^\star$ with small rank tends to correspond to simpler proofs.

\paragraph{Numerical results.}
Table~\ref{tab:effective_index_set_lambda} presents 
$\mathrm{supp}(\lambda^{\star,\textrm{sparse}})$, $\mathrm{rank}(Z^{\star,\textrm{sparse}})$,
and the non-zero columns of $P^{\star,\textrm{sparse}}$ from solving the convex SDP \eqref{eq:find-sparse-sol}.
We use $\mu=0.1$, $L=1$, and $R=1$.
From the results, we infer the pattern
\begin{equation*}
\mathrm{supp}(\lambda^{\star,\textrm{sparse}})=\{(\star,i)\}_{i\in[0:N]}\cup\{(i,i+1)\}_{i\in[0:N-1]}\cup\{N,i\}_{i\in\{\star\}\cup[0:N-1]},
\end{equation*}
which has only $3N+2$ components compared to the $(N+2)(N+1)$ of the full index set.
Furthermore, $\mathrm{rank}(Z^{\star,\textrm{sparse}})=1$.
For $N=1,\dots,5$, we verified that there are globally optimal solutions satisfying these patterns.
For $N=6,7,\dots,25$, we verified that there are locally optimal solutions satisfying these patterns.

\begin{table}
\begin{centering}
{\footnotesize{}}%
\begin{tabular}{cc>{\centering}p{5em}>{\centering}p{5em}>{\centering}p{5em}}
\toprule 
\multicolumn{1}{c}{{\footnotesize{}$N$}} & {\footnotesize{}$\mathrm{supp}(\lambda^{\star})$ } & \centering{}{\footnotesize{}Rank of $Z^{\star,\textrm{sparse}}$ } & \centering{}{\footnotesize{}Index of nonzero column of $P^{\star,\textrm{sparse}}$ } & \centering{}{\footnotesize{}Runtime (s) for solving \eqref{eq:find-sparse-sol}}\tabularnewline
\midrule
\midrule 
{\footnotesize{}$1$  } & {\footnotesize{}$\begin{array}{l}
\{(\star,0),(\star,1),\\
(0,1),\\
(1,\star),(1,0)\}
\end{array}$  } & \centering{}{\footnotesize{}$1$ } & \centering{}{\footnotesize{}Column \# $1$ } & \centering{}{\footnotesize{}$0.0021$}\tabularnewline
\midrule 
{\footnotesize{}$2$  } & {\footnotesize{}$\begin{array}{l}
\{(\star,0),(\star,1),(\star,2),\\
(0,1),(1,2),\\
(2,\star),(2,0),(2,1)\}
\end{array}$  } & \centering{}{\footnotesize{}$1$ } & \centering{}{\footnotesize{}Column \# 1 } & \centering{}{\footnotesize{}$0.0056$}\tabularnewline
\midrule 
{\footnotesize{}$3$  } & {\footnotesize{}$\begin{array}{l}
\{(\star,0),(\star,1),(\star,2),(\star,3),\\
(0,1),(1,2),(2,3),\\
(3,\star),(3,0),(3,1),(3,2)\}
\end{array}$  } & \centering{}{\footnotesize{}$1$  } & \centering{}{\footnotesize{}Column \# 1  } & \centering{}{\footnotesize{}$0.0071$}\tabularnewline
\midrule 
{\footnotesize{}4  } & {\footnotesize{}$\begin{array}{l}
\{(\star,i)\}_{i\in[0:4]}\cup\\
\{(i,i+1)\}_{i\in[0:3]}\cup\\
\{4,i\}_{i\in\{\star\}\cup[0:3]}
\end{array}$  } & \centering{}{\footnotesize{}$1$  } & \centering{}{\footnotesize{}Column \# 1  } & \centering{}{\footnotesize{}$0.0097$}\tabularnewline
\midrule 
{\footnotesize{}5  } & {\footnotesize{}$\begin{array}{l}
\{(\star,i)\}_{i\in[0:5]}\cup\\
\{(i,i+1)\}_{i\in[0:4]}\cup\\
\{5,i\}_{i\in\{\star\}\cup[0:4]}
\end{array}$ } & \centering{}{\footnotesize{}$1$  } & \centering{}{\footnotesize{}Column \# 1  } & \centering{}{\footnotesize{}$0.0140$}\tabularnewline
\bottomrule
\end{tabular}{\footnotesize\par}
\par\end{centering}
{\caption{Structure of the inner-dual variables obtained from the convex SDP \eqref{eq:find-sparse-sol}.
The last column shows the runtime to solve \eqref{eq:find-sparse-sol}.
(Table~\ref{tab:timing-bnb-pep-grad-red-FmuL} shows the runtime to solve \eqref{eq:BnB-PEP-Preli}, a prerequisite for solving \eqref{eq:find-sparse-sol}.) \label{tab:effective_index_set_lambda}}
}
\end{table}

\paragraph{Discussion.}
Prior work on optimized FSFOMs such as OGM, ITEM, and OGM-G \cite{drori2014performance,kim2016optimized,taylor2021optimal,kim2021optimizing} discard certain inequalities in their formulations. The choice of which inequality to discard, which is equivalent to identifying $\mathrm{supp}(\lambda^\star)$, was likely carried out through ad-hoc trial and error. As no reasoning or intuition was provided behind the choice and as the set of discarded inequalities are different from one work to another, the process is opaque. Our approach provides a systematic process for making this choice.

To clarify, we solve the exact, unrelaxed \eqref{eq:BnB-PEP-Preli} with BnB-PEP Algorithm for $N=1,\dots,5$.
The methodology for $N=6,7,\dots$ is a heuristic in the sense that our solution is exactly only under the condition that the observed sparsity pattern continues. If the pattern changes, the QCQP becomes a relaxation, and the produced FSFOM becomes suboptimal. However, one can be reasonably confident in the sparsity pattern as it is based on the exact solutions for $N=1,\dots,5$.

\section{Generalized BnB-PEP methodology
\label{subsec:Generalization-of-QCQO-framework}}
We now discuss the generalization of the BnB-PEP methodology for general
$\mathcal{E}$, $\mathcal{F}$, and $\mathcal{C}$.

\paragraph{Generalized BnB-PEP-QCQP.}
The BnB-PEP-QCQP formulation for general $\mathcal{E}$, $\mathcal{F}$, and $\mathcal{C}$ follows steps analogous to those of $\mathsection$\ref{subsec:Converting-into-QCQO-gradient-reduction-F-mu-L}.
\begin{enumerate}
\item[(i)] \textbf{Infinite-dimensional inner optimization problem.}
Construct an infinite-dimensional representation of \eqref{eq:worst-case-pfm}
analogous to \eqref{eq:worst-case-pfm-fmuL-1}
of $\mathsection$\ref{subsec:Converting-into-QCQO-gradient-reduction-F-mu-L}.
When $x_{\star}$ exists, set $x_{\star}=0$ and $f(x_{\star})=0$ without loss of generality.
\item[(ii)] 
\textbf{Interpolation argument.} 
Using a reparametrization (if necessary) and an interpolation argument, 
formulate the infinite-dimensional inner problem of (i) as a finite-dimensional problem
analogous to \eqref{eq:worst-case-pfm-fmuL-3} of $\mathsection$\ref{subsec:Converting-into-QCQO-gradient-reduction-F-mu-L}.
\item[(iii)]
\textbf{Grammian formulation.}
By introducing Grammian matrices and using a large-scale assumption,
formulate the finite-dimensional inner maximization problem of (ii) as an SDP, analogous to
the problem \eqref{eq:worst-case-pfm-sdp-1} in $\mathsection$\ref{subsec:Converting-into-QCQO-gradient-reduction-F-mu-L}.
When the FSOM is fixed, the SDP is a convex optimization problem.
\item[(iv)]
\textbf{Dualization.}
Form the dual the SDP of (iii) analogous to \eqref{eq:worst-case-pfm-dual-1} of $\mathsection$\ref{subsec:Converting-into-QCQO-gradient-reduction-F-mu-L}.
Assume strong duality.
\item[(v)]
\textbf{Formulating \eqref{eq:main-min-max-problem} as a QCQP.}
Using Lemma~\ref{Lem:quadratic-characterization-psd-1},
replace the SDP constraint $Z\succeq 0$ of the dual SDP with $Z=PP^\top$, where $P$
is lower triangular with nonnegative diagonals.
This formulates \eqref{eq:main-min-max-problem} as a QCQP analogous to
\eqref{eq:BnB-PEP-Preli} of $\mathsection$\ref{subsec:Converting-into-QCQO-gradient-reduction-F-mu-L}.
When $f$ is nonconvex, certain cubic or trilinear terms may arise, whereas for a convex $f$ the nonlinear terms are bilinear or quadratic.
If so, for such a nonconvex $f$, formulate such terms as quadratic or bilinear constraints by introducing dummy variables, a process illustrated in $\mathsection$\ref{subsec:Smooth-nonconvex-gradient-reduction}
and $\mathsection$\ref{subsec:pot-bnb-pep-wcvx-Moreau}.
We call the resultant QCQP the BnB-PEP-QCQP.
The variables of the dual SDP of (iv) are present in the BnB-PEP-QCQP, and we refer to them as the inner-dual-variables.
\end{enumerate}

\paragraph{Generalized BnB-PEP Algorithm.}
We solve the BnB-PEP-QCQP to certifiable global optimality with the following generalized BnB-PEP Algorithm,
a generalization of Algorithm \ref{alg:Alg-1}.
\begin{itemize}
\item \textbf{Stage 1: Compute a feasible solution.}
Fix the stepsizes in the dual SDP of Step (iv) of the formulation of BnB-PEP-QCQP
to a reasonable $h^{\textrm{init}}$
and solve the resultant convex minimization problem to obtain a feasible the BnB-PEP-QCQP.
Table~\ref{tab:Fixed-stepsize-vector-table} lists reasonable stepsizes.
\item \textbf{Stage 2: Compute a locally optimal solution 
by warm-starting at Stage 1 solution.} Warm-start the BnB-PEP-QCQP with
the feasible solution found in Stage 1 and solve the problem to local
optimality using a nonlinear interior-point method.
\item \textbf{Stage 3: Compute a globally optimal solution by 
warm-starting at Stage 2 solution.} Warm-start the BnB-PEP-QCQP with
the locally optimal solution found in Stage 2 and solve the problem
to global optimality using a customized spatial branch-and-bound algorithm
described in the following.
\end{itemize}
\begin{table}
\begin{centering}
\begin{tabular}{>{\centering}p{5em}>{\centering}p{20em}}
\toprule 
{\footnotesize{}Function class} & {\footnotesize{}Fixed stepsize $h^{\textrm{init}}$ for Stage 1 of
the BnB-PEP Algorithm}\tabularnewline
\midrule
\midrule 
{\footnotesize{}$\mathcal{F}_{0,L}$ } & {\footnotesize{}$h_{i,j}^{\textrm{init}}=\begin{cases}
1/L, & \textrm{if }j=i-1,\\
0, & \textrm{else},
\end{cases}\quad0\leq j<i\leq N.$}\tabularnewline
\midrule 
{\footnotesize{}$\mathcal{F}_{\mu,L}$} & {\footnotesize{}Same as $\mathcal{F}_{0,L}.$}\tabularnewline
\midrule 
{\footnotesize{}$\mathcal{F}_{-L,L}$ } & {\footnotesize{}Same as $\mathcal{F}_{0,L}.$}\tabularnewline
\midrule 
{\footnotesize{}$\mathcal{W}_{\rho,L}$} & {\footnotesize{}$h_{i,j}^{\textrm{init}}=\begin{cases}
\frac{R\rho}{L}\frac{1}{\sqrt{N+1}}, & \textrm{if }j=i-1,\\
0, & \textrm{else},
\end{cases}\quad0\leq j<i\leq N.$}\tabularnewline
\bottomrule
\end{tabular}
\par\end{centering}
\caption{Fixed stepsize vector $h^{\textrm{init}}$ to use in step 1 of the
BnB-PEP Algorithm. For $\protect\cpc$, and $\protect\rhoLWcvx,$ $R$\textgreater 0
is the upper bound associated with the initial condition. 
\label{tab:Fixed-stepsize-vector-table}}
\end{table}

\paragraph{Efficient implementation of the generalized BnB-PEP Algorithm.}
We customize the spatial branch-and-bound algorithm to exploit specific problem structure of the generalized BnB-PEP-QCQP.
The techniques are analogous to those described in $\mathsection$\ref{sec:Efficient-implementation-and-enhancement}.
We find bounds on optimal solutions through implied linear constraints, SDP relaxation, and a heuristic.
We find tighter lower bounds via lazy callback by replacing $Z=PP^\top$ with 
\[
\tr\left(Zyy^{\top}\right)\geq0,\qquad \forall\,y\in Y
\]
and lazily updating $Y$.
We improve upper bounds via SDP solves by constructing a merit function to measure the infeasibility at the nodes of the branch-and-bound tree and solving the convex SDP with the stepsizes fixed when the merit function value falls below some tolerance.
We exploit the structure of the inner-dual variables by observing the sparsity and low-rank pattern for small $N$ (e.g., $N \leq 5$) and extrapolating the patterns to larger $N$.

\section{Applications \label{sec:applications}}
In this section, we demonstrate the strength of the BnB-PEP methodology by applying it to three setups for which the prior methodologies do not apply.
Numerical experiments of this section were performed in the computational setup described in $\mathsection$\ref{subsec:computational_setup}. We empirically observed that, among the two approaches of $\mathsection$\ref{subsec:Compute-bounds} for computing variable bounds, the heuristic-based bounds were tighter and lead to runtimes faster by factor of 2--5 compared to using the SDP-based bounds.
We report the faster runtimes in our tables.
In all instances, the two approaches produced the same optimal solutions.

\subsection{Optimal gradient method without momentum \label{subsec:AWM}}

In optimization folklore, momentum is considered essential for accelerating first-order gradient methods. Indeed, prior FSFOMs minimizing smooth convex functions such as Nesterov's method \cite{Nest83}, OGM \cite{drori2014performance,kim2016optimized}, ITEM \cite{taylor2021optimal}, and many others \cite{lee2021, goujaud2021super} all achieve accelerated rates with momentum. However, a little known fact is that simple gradient descent, without momentum, can achieve an accelerated rate for minimizing strongly convex \emph{quadratics} \cite{young1953,Fabian21}. Whether a similar acceleration without momentum is possible for convex non-quadratic functions is not known.

In this section, we investigate whether the simple gradient descent
method 
\begin{equation}
x_{i}=x_{i-1}-\frac{1}{L}h_{i-1}\nabla f(x_{i-1})\tag{\ensuremath{\mathcal{G}_{N}}}\label{eq:simple-gradient-descent}
\end{equation}
with $i\in[1:N]$ can achieve an accelerated rate for minimizing $L$-smooth convex functions when the stepsize $\{h_{i}\}_{i\in[0:N-1]}$ is chosen optimally.
We denote the class of FSFOM of this form as $\mathcal{G}_{N}\subset\mathcal{M}_N$.

As we discuss in $\mathsection$\ref{subsubsec:lowero1k}, it is relatively straightforward to show that the unaccelerated $\mathcal{O}(1/k)$ rate cannot be surpassed if $\{h_{i}\}_{i\in[0:N-1]}$ stays within the ``standard'' range $(0,2)$.
However, Young's method \cite{young1953} uses long steps satisfying $1 <h_{i}<L/\mu$ for some $i$ to achieve an accelerated rate for $L$-smooth and $\mu$-strongly convex quadratics.
The question is whether a similar use of long steps can provide an acceleration in the smooth convex setup.

Formally, we choose the function class 
\[
\mathcal{F}=\{f\,|\,f\in\LSmthCvx,\,f\text{ has a minimizer }x_{\star}\},
\]
performance
measure $\mathcal{E}=f(x_{N})-f(x_{\star})$, and initial condition $\mathcal{C}=\|x_{0}-x_{\star}\|^{2}-R^{2}\leq0$
with $R>0$. We solve the following instance of \eqref{eq:main-min-max-problem}:
\begin{equation*}
\begin{array}{ll}
\mathcal{R}^{\star}\left(\mathcal{G}_{N},\mathcal{E},\mathcal{F},\mathcal{C}\right)=\underset{M\in\mathcal{G}_{N}}{\mbox{minimize}} & \mathcal{R}\left(M,\mathcal{E},\mathcal{F},\mathcal{C}\right)\end{array}.
\end{equation*}

\paragraph{Derivation of BnB-PEP-QCQP. }

Following $\mathsection$\ref{subsec:Generalization-of-QCQO-framework}  Step (i), formulate the inner optimization problem \eqref{eq:worst-case-pfm} as
\begin{align*}
 & \mathcal{R}\left(M,\mathcal{E},\mathcal{F},\mathcal{C}\right)\nonumber \\
 & =\left(\begin{array}{ll}
\textrm{maximize} & f(x_{N})-f(x_{\star})\\
\textrm{subject to} & f\in\LSmthCvx,\\
 & \nabla f(x_{\star})=0,\\
 & x_{i}=x_{i-1}-h_{i-1}\nabla f(x_{i-1})\quad i\in[1:N],\\
 & \|x_{0}-x_{\star}\|^{2}\leq R^{2},\\
 & x_{\star}=0,\;f(x_{\star})=0,
\end{array}\right)
\end{align*}
where $f$ and $x_{0},\ldots,x_{N}$ are the decision variables. Write $h=\{h_{i}\}_{i\in[0:N-1]}$.
Following $\mathsection$\ref{subsec:Generalization-of-QCQO-framework}  Step (ii), use the interpolation argument to formulate the inner problem as
\begin{align*}
 & \mathcal{R}\left(M,\mathcal{E},\mathcal{F},\mathcal{C}\right)\nonumber \\
= & \left(\begin{array}{l}
\textrm{maximize}\quad f_N-f_\star \\ %
\textrm{subject to}\\
f_{i}\geq f_{j}+\langle g_{j}\mid x_{i}-x_{j}\rangle+\frac{1}{2L}\|g_{i}-g_{j}\|^{2},\quad i,j\in I_{N}^{\star}:i\neq j,\\
g_{\star}=0,x_{\star}=0,f_{\star}=0,\\
x_{i}=x_{0}-(1/L)\sum_{j=0}^{i-1}h_{j}g_{j},\quad i\in[1:N],\\
\|x_{0}-x_{\star}\|^{2}\leq R^{2},
\end{array}\right)
\end{align*}
where $\{x_{i},g_{i},f_{i}\}_{i\in I_{N}^{\star}}\subseteq\rl^{d}\times\rl^{d}\times\rl$ are the decision variables.
Following $\mathsection$\ref{subsec:Generalization-of-QCQO-framework}  Step (iii), implement the Grammian transformation.
Define the Grammian matrices $H\in\rl^{d\times(N+2)}$, $G\in\mathbb{S}_{+}^{N+2}$, and $F\in\rl^{1\times(N+1)}$ using the same equations in \eqref{eq:grammian-mats}, $\{\mathbf{x}_{i},\mathbf{g}_{i},\mathbf{f}_{i}\}_{i\in I_{N}^{\star}}$
using the same encoding as \eqref{eq:bold-vec},
except for $\{{\bf x}_{i}\}_{i\in[1:N]}$, which we define as
\begin{align*}
 & \mathbf{x}_{i}=\mathbf{x}_{0}-(1/L)\sum_{j=0}^{i-1}h_{j}\mathbf{g}_{j}\in\rl^{N+2},\quad i\in[1:N].
\end{align*}
Note, $\mathbf{x}_{i}$ is linearly parameterized by $h$. The matrices
$A_{i,j}$, $B_{i,j}$, $C_{i,j}$, and $a_{i,j}$ are the same as in \eqref{eq:ABCa-mat-vec}
except that they are now parameterized by $h$.
Under the large-scale assumption $d\geq N+2$, we equivalently formulate the inner problem as the SDP 
\begin{align*}
 & \mathcal{R}\left(M,\mathcal{E},\mathcal{F},\mathcal{C}\right)\nonumber \\
= & \left(\begin{array}{l}
\textrm{maximize}\quad Fa_{\star,N}\\
\textrm{subject to}\\
Fa_{i,j}+\tr GA_{i,j}(h)+\frac{1}{2L}\tr GC_{i,j}\leq0,\quad i,j\in I_{N}^{\star}:i\neq j,\quad \rhd \textsf{\;dual var.\;} \lambda_{i,j}\geq0\\
-G\preceq0,\quad \rhd \textsf{\;dual var.\;} Z\succeq0\\
\tr GB_{0,\star}\leq R^{2},\quad \rhd \textsf{\;dual var.\;} \nu\geq0
\end{array}\right)
\end{align*}
where $F\in\rl^{1\times(N+1)}$ and $G\in\rl^{(N+2)\times(N+2)}$ are the decision variables.
Following $\mathsection$\ref{subsec:Generalization-of-QCQO-framework}  Step (iv), construct the dual:
\begin{align*}
 & \mathcal{R}\left(M,\mathcal{E},\mathcal{F},\mathcal{C}\right)\nonumber \\
 & =\left(\begin{array}{l}
\textrm{minimize}\quad\nu R^{2}\\
\textrm{subject to}\\
\sum_{i,j\in I_{N}^{\star}:i\neq j}\lambda_{i,j}a_{i,j}-a_{\star,N}=0,\\
\nu B_{0,\star}+\sum_{i,j\in I_{N}^{\star}:i\neq j}\lambda_{i,j}\left(A_{i,j}(h)+\frac{1}{2L}C_{i,j}\right)=Z,\\
Z\succeq0,\\
\nu\geq0,\;\lambda_{i,j}\geq0,\quad i,j\in I_{N}^{\star}:i\neq j,
\end{array}\right)
\end{align*}
where $\nu$, $\lambda$, and $Z$ are the decision variables.
Assume that strong duality holds.
Finally, following $\mathsection$\ref{subsec:Generalization-of-QCQO-framework}  Step (v), use Lemma~\ref{Lem:quadratic-characterization-psd-1} to pose \eqref{eq:main-min-max-problem} as the following BnB-PEP-QCQP:
\begin{align}
 & \mathcal{R}^{\star}\left(\mathcal{G}_{N},\mathcal{E},\mathcal{F},\mathcal{C}\right)\nonumber \\
= & \left(\begin{array}{l}
\textrm{minimize}\quad\nu R^{2}\\
\textrm{subject to}\\
\sum_{i,j\in I_{N}^{\star}:i\neq j}\lambda_{i,j}a_{i,j}-a_{\star,N}=0,\\
\nu B_{0,\star}+\sum_{i,j\in I_{N}^{\star}:i\neq j}\lambda_{i,j}\left(A_{i,j}(h)+\frac{1}{2L}C_{i,j}\right)=Z,\\
P\text{ is lower triangular with nonnegative diagonals},\\
PP^{\top}=Z,\\
\nu\geq0,\;\lambda_{i,j}\geq0,\quad i,j\in I_{N}^{\star}:i\neq j,
\end{array}\right)\label{eq:BnB-PEP-final-QCQO-AWM}
\end{align}
where $\lambda$, $\nu$, $Z$, $P$, and $h$ are the decision variables.

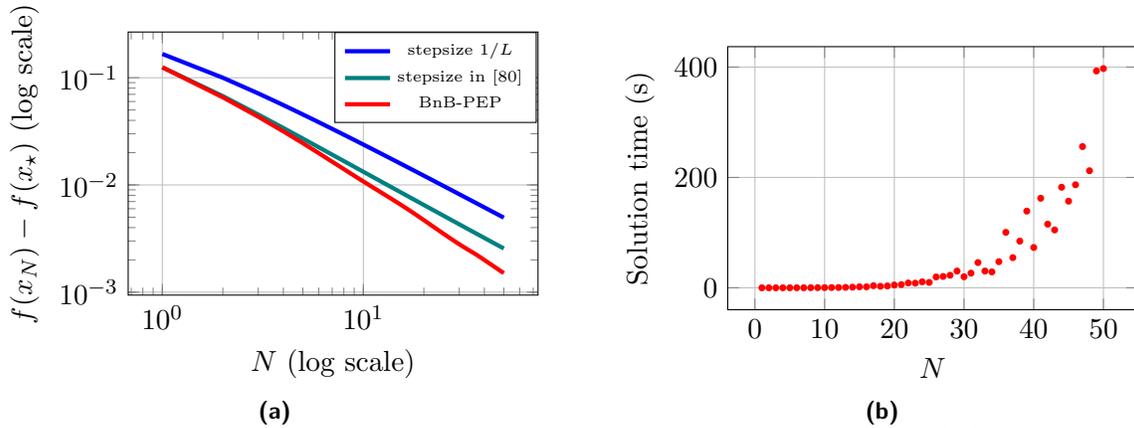
\begin{figure}[htp]%
    \centering
    \subfloat[\centering ]{{\begin{tikzpicture}
\begin{loglogaxis}[legend style= {at={(0.75,1)},anchor=north, font=\tiny}, scale only axis, height=3.5cm, width=0.33*\textwidth, xmajorgrids, ymajorgrids, xlabel={$N$ (log scale)}, ylabel={$f(x_N) - f(x_\star)$ (log scale)}]
       \addplot+[no marks, style={{ultra thick}}, color={blue}]
        coordinates {
            (1,0.166666662878905)
            (2,0.09999999029850636)
            (3,0.07142857181180012)
            (4,0.05555555588637967)
            (5,0.04545454821049464)
            (6,0.03846153834611883)
            (7,0.03333333052329084)
            (8,0.029411763254097924)
            (9,0.02631578974090786)
            (10,0.023809523691808544)
            (11,0.02173913231439794)
            (12,0.02000000013893638)
            (13,0.018518517118125934)
            (14,0.017241376867162612)
            (15,0.016129032110438105)
            (16,0.015151515790992111)
            (17,0.014285714285486409)
            (18,0.013513513512704389)
            (19,0.012820512687607748)
            (20,0.012195121951011759)
            (21,0.011627906911951374)
            (22,0.011111110560599035)
            (23,0.010638296845389429)
            (24,0.01020408153124342)
            (25,0.009803921568442273)
            (26,0.009433961957771858)
            (27,0.009090909088300776)
            (28,0.008771929260270752)
            (29,0.008474576271040018)
            (30,0.00819671998566875)
            (31,0.007936506977736553)
            (32,0.0076923071745407)
            (33,0.007462686536263421)
            (34,0.007246376509150704)
            (35,0.00704224784556508)
            (36,0.0068493114851270655)
            (37,0.006666666653992947)
            (38,0.006493506493458986)
            (39,0.00632911391715595)
            (40,0.006172839461073482)
            (41,0.00602409619990842)
            (42,0.005882352511171741)
            (43,0.005747124401857555)
            (44,0.005617973156054338)
            (45,0.005494501288822075)
            (46,0.005376339799100511)
            (47,0.005263149977359086)
            (48,0.005154637634104015)
            (49,0.005050502189660276)
            (50,0.004950490301662746)
        }
        ;
    \addlegendentry {stepsize $1/L$}
        \addplot+[no marks, style={{ultra thick}}, color={teal}]
        coordinates {
            (1,0.12500000209151407)
            (2,0.06735532243512904)
            (3,0.0453638496046663)
            (4,0.033975929299897796)
            (5,0.02707013348718289)
            (6,0.022456073997411802)
            (7,0.019164091648483667)
            (8,0.016701361907705815)
            (9,0.01479186685683739)
            (10,0.01326935378961052)
            (11,0.012027599734427385)
            (12,0.010996130751029629)
            (13,0.010125808020736933)
            (14,0.009381998132357525)
            (15,0.008739082410372185)
            (16,0.008177943847311691)
            (17,0.007683949891146401)
            (18,0.007245816222902167)
            (19,0.006855055002110819)
            (20,0.006503212337052077)
            (21,0.0061858880572582845)
            (22,0.005897850351858571)
            (23,0.005635617813669572)
            (24,0.00539505223389345)
            (25,0.005174280671324655)
            (26,0.004970828780148045)
            (27,0.004782632482562268)
            (28,0.004608161950466669)
            (29,0.004445846685000348)
            (30,0.004294556832298904)
            (31,0.00415317993169601)
            (32,0.004020814642716015)
            (33,0.003896551042392898)
            (34,0.003779706260662491)
            (35,0.0036696376337141054)
            (36,0.0035657656265870963)
            (37,0.0034675960838675465)
            (38,0.0033750905324250392)
            (39,0.00328664390085251)
            (40,0.003203097456045503)
            (41,0.003123473540837639)
            (42,0.003047822986360286)
            (43,0.0029757386979332376)
            (44,0.002906974827136179)
            (45,0.002841307608409582)
            (46,0.0027788660025964153)
            (47,0.0027184665198351607)
            (48,0.0026609295200268356)
            (49,0.0026057756865681255)
            (50,0.0025528520660105455)
        }
        ;
    \addlegendentry {stepsize in \cite{taylor2017smooth}}
    \addplot+[no marks, style={{ultra thick}}, color={red}]
        coordinates {
            (1,0.12512881469455195)
            (2,0.06562956614501574)
            (3,0.043276013183385365)
            (4,0.03158895842592049)
            (5,0.024523824372707486)
            (6,0.01979804767938377)
            (7,0.016468321113435616)
            (8,0.014044081884960619)
            (9,0.012199539030785344)
            (10,0.010778290063013797)
            (11,0.00964275453011679)
            (12,0.00871937439141247)
            (13,0.007947117770036769)
            (14,0.007288620256492238)
            (15,0.006723385221382876)
            (16,0.00622107311893578)
            (17,0.0057753204564314625)
            (18,0.005375928275064163)
            (19,0.005018543170206344)
            (20,0.004697259718435158)
            (21,0.0044077434376316015)
            (22,0.004147823040200636)
            (23,0.003915297144929482)
            (24,0.0037071125806800607)
            (25,0.0035171473367649766)
            (26,0.003344400224906669)
            (27,0.003186804114963152)
            (28,0.0030447587482080804)
            (29,0.0029169902903467453)
            (30,0.0027999890538774816)
            (31,0.0026926271354745083)
            (32,0.002593732697356943)
            (33,0.0025048688556175277)
            (34,0.002422290021009871)
            (35,0.0023456103574459593)
            (36,0.0022717150535531755)
            (37,0.002200147478985533)
            (38,0.002128604082478878)
            (39,0.002059714082729395)
            (40,0.0019944183984015908)
            (41,0.0019329859644700803)
            (42,0.0018748211347185994)
            (43,0.0018196855396110986)
            (44,0.0017672773037448156)
            (45,0.0017174557285748935)
            (46,0.0016700768517786942)
            (47,0.001624990076185959)
            (48,0.0015821623545387515)
            (49,0.0015414982595569216)
            (50,0.0015029346188281322)
        }
        ;
    \addlegendentry {$\textrm{BnB-PEP}$}
\end{loglogaxis}
\end{tikzpicture} }}%
    \qquad
    \subfloat[\centering ]{{\begin{tikzpicture}
\begin{axis}[legend style= {at={(0.14,1)},anchor=north, font=\tiny}, scale only axis, height=3.5cm, width=0.33*\textwidth, xmajorgrids, ymajorgrids, xlabel={$N$}, ylabel={Solution time (s)}]
    \addplot[only marks, mark size={0.5pt}, style={{ultra thick}}, color={red}, mark={*}]
        coordinates {
            (1,0.027555863)
            (2,0.032388035)
            (3,0.047763634)
            (4,0.062770943)
            (5,0.0803967)
            (6,0.093045279)
            (7,0.120842114)
            (8,0.226563932)
            (9,0.283149162)
            (10,0.338831766)
            (11,0.466318493)
            (12,0.599645667)
            (13,0.83010181)
            (14,1.101309054)
            (15,1.825797928)
            (16,1.77327329)
            (17,3.86729853)
            (18,2.803719523)
            (19,3.357504113)
            (20,5.139137331)
            (21,5.82537542)
            (22,8.740043069)
            (23,8.426102187)
            (24,10.86475903)
            (25,9.833601177)
            (26,19.693708254)
            (27,20.578535247)
            (28,22.761463785)
            (29,30.36708108)
            (30,20.036467622)
            (31,26.558231465)
            (32,45.770337795)
            (33,30.395496869)
            (34,28.858312822)
            (35,47.344663504)
            (36,100.528365044)
            (37,54.69293109)
            (38,84.605542658)
            (39,139.003598042)
            (40,73.057880997)
            (41,162.29801287)
            (42,115.411673982)
            (43,104.848576032)
            (44,182.247982108)
            (45,157.005708348)
            (46,186.735760629)
            (47,255.895705108)
            (48,212.159618103)
            (49,392.951958578)
            (50,397.227323194)
        }
        ;
\end{axis}
\end{tikzpicture} }}%
    \caption{
Numerical results for computing locally optimal stepsizes  by solving \eqref{eq:BnB-PEP-final-QCQO-AWM} with the first two stages of the BnB-PEP Algorithm. Global optimality of the stepsizes are verified for $N=1,2,\ldots, 25$.
(Left) Worst-case performance of $f(x_{N})-f_{\star}$ vs.\ iteration count $N$.
(Right) Runtimes of the BnB-PEP Algorithm (including Stages 1 and 2 but excluding Stage 3).
}\label{fig:AWM-performance-Shuvo-vs-Drori}
\end{figure}

\paragraph{Numerical results.}

\begin{table}
\centering{}%
\begin{tabular}{>{\centering}b{2em}>{\centering}b{4em}>{\centering}b{4em}>{\centering}b{5em}>{\centering}b{5em}>{\centering}b{5em}>{\centering}b{7em}}
\toprule 
\multirow{2}{2em}{\centering{}{\footnotesize{}$N$}} & \multirow{2}{4em}{\centering{}{\footnotesize{}\# variables}} & \multirow{2}{4em}{\centering{}{\footnotesize{}\# constraints}} & \multicolumn{3}{c}{{\footnotesize{}Worst-case $f(x_{N})-f(x_{\star})$}} & \multirow{2}{7em}{\centering{}{\footnotesize{}Runtime of the BnB-PEP Algorithm}}\tabularnewline
\cmidrule{4-6} \cmidrule{5-6} \cmidrule{6-6} 
 &  &  & \centering{}{\footnotesize{}Optimal} & {\footnotesize{}For stepsize in \cite[$\mathsection$4.1.1]{taylor2017smooth}} & \centering{}{\footnotesize{}For stepsize $h_{i}=1$} & \tabularnewline
\midrule
\midrule 
\centering{}{\footnotesize{}$1$} & {\footnotesize{}20} & \centering{}{\footnotesize{}$33$} & \centering{}{\footnotesize{}$0.125$} & {\footnotesize{}$0.125$} & \centering{}{\footnotesize{}$0.1667$} & \centering{}{\footnotesize{}$0.03$ s}\tabularnewline
\midrule
\centering{}{\footnotesize{}$2$} & {\footnotesize{}34} & \centering{}{\footnotesize{}$56$} & \centering{}{\footnotesize{}$0.065946$} & {\footnotesize{}$0.067355$} & \centering{}{\footnotesize{}$0.1$} & \centering{}{\footnotesize{}$0.252$ s}\tabularnewline
\midrule
\centering{}{\footnotesize{}$3$} & {\footnotesize{}54} & \centering{}{\footnotesize{}$85$} & \centering{}{\footnotesize{}$0.042893$} & {\footnotesize{}$0.045364$} & \centering{}{\footnotesize{}$0.0714$} & \centering{}{\footnotesize{}$0.375$ s}\tabularnewline
\midrule
\centering{}{\footnotesize{}$4$} & {\footnotesize{}77} & \centering{}{\footnotesize{}$120$} & \centering{}{\footnotesize{}$0.03117$} & {\footnotesize{}$0.033976$} & \centering{}{\footnotesize{}$0.0555$} & \centering{}{\footnotesize{}$17.602$ s}\tabularnewline
\midrule
\centering{}{\footnotesize{}$5$} & {\footnotesize{}104} & \centering{}{\footnotesize{}$161$} & \centering{}{\footnotesize{}$0.024071$} & {\footnotesize{}$0.0270701$} & \centering{}{\footnotesize{}$0.0454$} & \centering{}{\footnotesize{}$86.904$ s}\tabularnewline
\midrule
\centering{}{\footnotesize{}$10$} & {\footnotesize{}$365$} & \centering{}{\footnotesize{}$456$} & \centering{}{\footnotesize{}$0.010622$} & {\footnotesize{}$0.0132692$} & \centering{}{\footnotesize{}$0.0238$} & \centering{}{\footnotesize{}$1$ d $18$ h}\tabularnewline
\midrule
\centering{}{\footnotesize{}$25$} & {\footnotesize{}1835} & \centering{}{\footnotesize{}$2241$} & \centering{}{\footnotesize{}$0.0034757$} & {\footnotesize{}$0.0051754$} & \centering{}{\footnotesize{}$0.0098$} & \centering{}{\footnotesize{}$2$ d $20$ h}\tabularnewline
\bottomrule
\end{tabular}
\caption{{Comparison between the performances of the optimal method obtained by solving \eqref{eq:BnB-PEP-final-QCQO-AWM} with the BnB-PEP Algorithm, the method with constant normalized stepsize $h_{i}=1$, and the method with constant normalized stepsize $h_i$ prescribed in \cite[$\mathsection$4.1.1]{taylor2017smooth}. The BnB-PEP Algorithm was executed on a standard laptop for $N=1,2,\ldots,10$, and on \texttt{MIT Supercloud} for $N=25$.} 
\label{tab:BnB-PEP-result-AWM}}
\end{table}

\begin{table}
\begin{centering}
{\footnotesize{}}%
\begin{tabular}{cc}
\toprule 
{\footnotesize{}$N$} & {\footnotesize{}$h^{\star}$}\tabularnewline
\midrule
\midrule 
{\footnotesize{}$1$} & {\footnotesize{}$\left[\begin{array}{c}
1.5\end{array}\right]$ }\tabularnewline
\midrule
{\footnotesize{}$2$} & {\footnotesize{}$\left[\begin{array}{c}
1.414214\\
1.876768
\end{array}\right]$  }\tabularnewline
\midrule
{\footnotesize{}$3$} & {\footnotesize{}$\left[\begin{array}{c}
1.414215\\
2.414207\\
1.500001
\end{array}\right]$  }\tabularnewline
\midrule
{\footnotesize{}$4$} & {\footnotesize{}$\left[
\begin{array}{c}
1.414214 \\
1.601232 \\
3.005144 \\
1.5 \\
\end{array}
\right]$  }\tabularnewline
\midrule
{\footnotesize{}$5$} & {\footnotesize{}$\left[
\begin{array}{c}
1.414214 \\
2.0 \\
1.414214 \\
3.557647 \\
1.5 \\
\end{array}
\right]$  }\tabularnewline
\midrule
{\footnotesize{}$10$} & {\footnotesize{} See Supplementary Information or \texttt{Github}  repository} \tabularnewline
\midrule
{\footnotesize{}$25$} & {\footnotesize{} See Supplementary Information or \texttt{Github} repository} \tabularnewline
\bottomrule
\end{tabular}\caption{{Globally optimal stepsizes obtained by solving \eqref{eq:BnB-PEP-final-QCQO-AWM} with the BnB-PEP Algorithm. \label{tab:Globally-optimal-stepsize-AWM}}}
\par\end{centering}
\end{table}

Tables~\ref{tab:BnB-PEP-result-AWM} and \ref{tab:Globally-optimal-stepsize-AWM} present the results of solving \eqref{eq:BnB-PEP-final-QCQO-AWM} with $L=1$, $R=1$,
$N=1,\ldots,5,10,25$ using the BnB-PEP Algorithm.
We compare the optimal stepsize with the constant normalized stepsize $h_{i}=1$ for all $i$, which is known to be optimal among $h_i \in (0,1]$
\cite[Corollary 2.8, Theorem 2.9]{drori2014thesis},
and constant normalized stepsize $h$ satisfying 
\begin{equation}
\frac{1}{2Nh+1}=(1-h)^{2N}, \label{eq:adrien_stepsize}
\end{equation}
which is conjectured by Taylor, Hendrickx, and Glineur to be the optimal constant normalized stepsize \cite[$\mathsection$4.1.1]{taylor2017smooth}.
The stepsizes presented in Table~\ref{tab:Globally-optimal-stepsize-AWM} are certified to be globally optimal.
Interestingly, we observe that the locally optimal stepsizes obtained at Stage 2, denoted by $h^{\textrm{lopt}}$, were already near-optimal.

This observation motivates us to apply just the first two stages of the BnB-PEP Algorithm for $N$ upto 50.
In Figure~\ref{fig:AWM-performance-Shuvo-vs-Drori}, we again compare the performance guarantees of the locally optimal stepsizes $h^{\textrm{lopt}}$ with that of the constant normalized stepsizes $h_{i}=1$ and the constant normalized stepsizes $h_i$ satisfying \eqref{eq:adrien_stepsize}. We verified global optimality of these stepsizes $h^{\textrm{lopt}}$ for $N=1,\dots,25$.
While $h^{\textrm{lopt}}$ for $N=26,\dots,50$ are not certifiably globally optimal (although we suspect that they are near-optimal), their computed performances are certifiably accurate.

Figure~\ref{fig:AWM-performance-Shuvo-vs-Drori} shows that the computed stepsizes $h^{\textrm{lopt}}$ outperforms both constant stepsizes.
Figure~\ref{fig:AWM_fit} presents a linear fit of the rate in the log-log scale. The fit $0.156/N^{1.178}$ indicates that the asymptotic rate may be faster than $\mathcal{O}(1/k)$.

Figure~\ref{fig:lopt_step_size_AWM} shows $h^{\textrm{lopt}}$ for $N=5,10,25,50$.
{
The optimal stepsizes $h^{\textrm{lopt}}$ for $N=1,2,\ldots,50$ (global optimality verified for $N=1,2,\ldots,25$) are provided as Supplementary Information and as a data file  at:
\begin{center}
\href{https://github.com/Shuvomoy/BnB-PEP-code/blob/main/Misc/stpszs.jl}{\texttt{https://github.com/Shuvomoy/BnB-PEP-code/blob/main/Misc/stpszs.jl}}
\end{center}
}

However, finding an analytical form of the computed stepsizes seems difficult. Therefore, we leave inconclusive the question of whether acceleration without momentum is possible.

\begin{figure}[htp]%
    \centering
    \subfloat[\centering ]{{\begin{tikzpicture}
\begin{loglogaxis}[legend style= {at={(0.75,1)},anchor=north, font=\tiny}, scale only axis, height=3.5cm, width=0.33*\textwidth,  xmajorgrids, ymajorgrids, xlabel={$N$ (log scale)}, ylabel={$f(x_N) - f(x_\star)$ (log scale)}]
    \addplot+[no marks, style={{ultra thick}}, color={cyan}]
        coordinates {
            (1,0.156)
            (2,0.06894634742296636)
            (3,0.0427637478662199)
            (4,0.03047178732672038)
            (5,0.02342814661697403)
            (6,0.018900027035208548)
            (7,0.015761556648458148)
            (8,0.013467425869402605)
            (9,0.011722680330548893)
            (10,0.010354391898269779)
            (11,0.009254735587201773)
            (12,0.008353127117134266)
            (13,0.007601500516302207)
            (14,0.006966036926995903)
            (15,0.006422277916032758)
            (16,0.00595211424926203)
            (17,0.005541862790149907)
            (18,0.005180999941015372)
            (19,0.004861304851269322)
            (20,0.004576266033151643)
            (21,0.004320661759575682)
            (22,0.004090257789121179)
            (23,0.0038815859529707365)
            (24,0.0036917795146547464)
            (25,0.003518449063502768)
            (26,0.0033595877918771166)
            (27,0.0032134983716146683)
            (28,0.0030787359110889256)
            (29,0.0029540630254920345)
            (30,0.002838414131061788)
            (31,0.0027308668338819272)
            (32,0.002630618826479542)
            (33,0.0025369690975617257)
            (34,0.002449302546795418)
            (35,0.0023670773081526882)
            (36,0.002289814243151372)
            (37,0.0022170881840826466)
            (38,0.0021485205974651695)
            (39,0.002083773406949282)
            (40,0.002022543768087136)
            (41,0.001964559628696641)
            (42,0.0019095759408514748)
            (43,0.001857371415953494)
            (44,0.001807745734475917)
            (45,0.0017605171380001097)
            (46,0.001715520344010425)
            (47,0.0016726047342504875)
            (48,0.0016316327758101041)
            (49,0.0015924786409138162)
            (50,0.001555026996937644)
        }
        ;
    \addlegendentry {$0.156/{N^{1.178}}$}
    \addplot+[no marks, style={{ultra thick, dashed}}, color={red}]
        coordinates {
            (1,0.1256566205662219)
            (2,0.06528134265059801)
            (3,0.04296258648300468)
            (4,0.0314092994399884)
            (5,0.02444484984264801)
            (6,0.01978465459518769)
            (7,0.016518292814685556)
            (8,0.014112049067437963)
            (9,0.012267186973534159)
            (10,0.010830264699706547)
            (11,0.009677753326700088)
            (12,0.008735286245867458)
            (13,0.007946575331581582)
            (14,0.007275200944710736)
            (15,0.006700494312853945)
            (16,0.006196404358284833)
            (17,0.005752899867299551)
            (18,0.005358738114527343)
            (19,0.005004690169416533)
            (20,0.004687271228052972)
            (21,0.004401317310560984)
            (22,0.00414467223649091)
            (23,0.0039134176873351426)
            (24,0.0037045194048323793)
            (25,0.0035166570798522375)
            (26,0.0033484846315400985)
            (27,0.0031977642318810834)
            (28,0.00305982860097494)
            (29,0.002933008467692449)
            (30,0.0028144979445567236)
            (31,0.0027045500887292262)
            (32,0.0026017882693560156)
            (33,0.002505827964541633)
            (34,0.0024161886516167585)
            (35,0.002332226836144157)
            (36,0.002253482021947919)
            (37,0.0021794901858584862)
            (38,0.0021098547462944433)
            (39,0.002044205316981468)
            (40,0.0019821837916262066)
            (41,0.0019235149533895614)
            (42,0.0018679153344809173)
            (43,0.0018151472482477063)
            (44,0.0017649926033543619)
            (45,0.0017172587304382092)
            (46,0.0016717721316462052)
            (47,0.0016283731656785)
            (48,0.0015869237770066956)
            (49,0.0015472948146156057)
            (50,0.001509371249819283)
        }
        ;
    \addlegendentry {$\textrm{BnB-PEP}$}
\end{loglogaxis}
\end{tikzpicture} }}%
    \qquad
    \subfloat[\centering ]{{\begin{tikzpicture}
\begin{axis}[legend style= {at={(0.75,1)},anchor=north, font=\tiny}, scale only axis, height=3.5cm, width=0.33*\textwidth, xmajorgrids, ymajorgrids,    y tick label style={
        /pgf/number format/.cd,
        fixed,
        precision=2,
        /tikz/.cd
    }, xlabel={$N$}, ylabel={$f(x_N) - f(x_\star)$}]
    \addplot+[no marks, style={{ultra thick}}, color={cyan}]
        coordinates {
            (1,0.156)
            (2,0.06894634742296636)
            (3,0.0427637478662199)
            (4,0.03047178732672038)
            (5,0.02342814661697403)
            (6,0.018900027035208548)
            (7,0.015761556648458148)
            (8,0.013467425869402605)
            (9,0.011722680330548893)
            (10,0.010354391898269779)
            (11,0.009254735587201773)
            (12,0.008353127117134266)
            (13,0.007601500516302207)
            (14,0.006966036926995903)
            (15,0.006422277916032758)
            (16,0.00595211424926203)
            (17,0.005541862790149907)
            (18,0.005180999941015372)
            (19,0.004861304851269322)
            (20,0.004576266033151643)
            (21,0.004320661759575682)
            (22,0.004090257789121179)
            (23,0.0038815859529707365)
            (24,0.0036917795146547464)
            (25,0.003518449063502768)
            (26,0.0033595877918771166)
            (27,0.0032134983716146683)
            (28,0.0030787359110889256)
            (29,0.0029540630254920345)
            (30,0.002838414131061788)
            (31,0.0027308668338819272)
            (32,0.002630618826479542)
            (33,0.0025369690975617257)
            (34,0.002449302546795418)
            (35,0.0023670773081526882)
            (36,0.002289814243151372)
            (37,0.0022170881840826466)
            (38,0.0021485205974651695)
            (39,0.002083773406949282)
            (40,0.002022543768087136)
            (41,0.001964559628696641)
            (42,0.0019095759408514748)
            (43,0.001857371415953494)
            (44,0.001807745734475917)
            (45,0.0017605171380001097)
            (46,0.001715520344010425)
            (47,0.0016726047342504875)
            (48,0.0016316327758101041)
            (49,0.0015924786409138162)
            (50,0.001555026996937644)
        }
        ;
    \addlegendentry {$0.156/{N^{1.178}}$}
    \addplot+[no marks, style={{dashed, ultra thick}}, color={red}, mark={*}]
        coordinates {
            (1,0.12500000039401646)
            (2,0.06594597669442348)
            (3,0.04289324973901232)
            (4,0.03256516933794558)
            (5,0.02439360612424908)
            (6,0.019814662514600474)
            (7,0.01651332612698534)
            (8,0.01403712301081951)
            (9,0.01218149925018591)
            (10,0.010707803345049002)
            (11,0.009565888232158155)
            (12,0.008697583409315162)
            (13,0.007952102291835897)
            (14,0.007297301195680385)
            (15,0.006730482071052881)
            (16,0.006252646825576575)
            (17,0.005811266656901224)
            (18,0.005375016068984765)
            (19,0.004988492057531461)
            (20,0.004678377044818489)
            (21,0.004432075710848663)
            (22,0.004182361213651744)
            (23,0.003927793416225444)
            (24,0.003685188468346122)
            (25,0.0035042664229482465)
            (26,0.0033476311067778732)
            (27,0.003197565715268286)
            (28,0.003042783220507925)
            (29,0.002897332765325484)
            (30,0.0027996149048139653)
            (31,0.0027113029867317606)
            (32,0.002611154164034634)
            (33,0.0025004251428767336)
            (34,0.0023944918234701054)
            (35,0.0023289238102664417)
            (36,0.002269349268869964)
            (37,0.0022063087714561456)
            (38,0.0021491496850726713)
            (39,0.0020823484078409675)
            (40,0.0020174824338334476)
            (41,0.0019410515750403516)
            (42,0.0018646784364243156)
            (43,0.0018059789720844554)
            (44,0.0017544334125811612)
            (45,0.001701395601716227)
            (46,0.0016528475942262394)
            (47,0.0016101924826737329)
            (48,0.0015764160943991692)
            (49,0.0015510946954243204)
            (50,0.0015327205414233504)
        }
        ;
    \addlegendentry {$\textrm{BnB-PEP}$}
\end{axis}
\end{tikzpicture} }}%
    \caption{
Fitting the worst-case performance of $f(x_{N})-f_{\star}$ corresponding to the locally optimal stepsizes obtained by solving \eqref{eq:BnB-PEP-final-QCQO-AWM} with the first two stages of the BnB-PEP Algorithm yields $0.156/{N^{1.178}}$. 
The asymptotic rate may be faster than $\mathcal{O}(1/k)$.
}\label{fig:AWM_fit}
\end{figure}
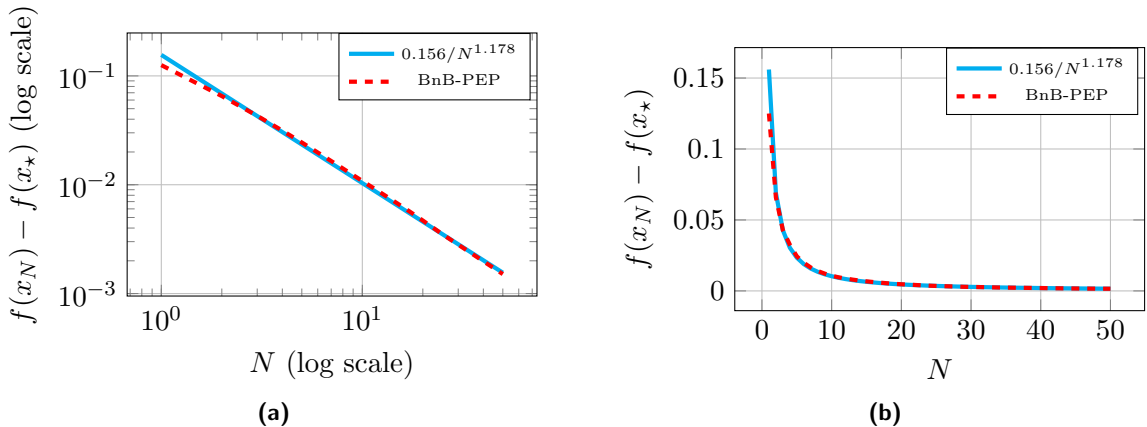

\begin{figure}[htp]
\centering{}\subfloat[$N=5$]{\centering{}\includegraphics[scale=0.75]{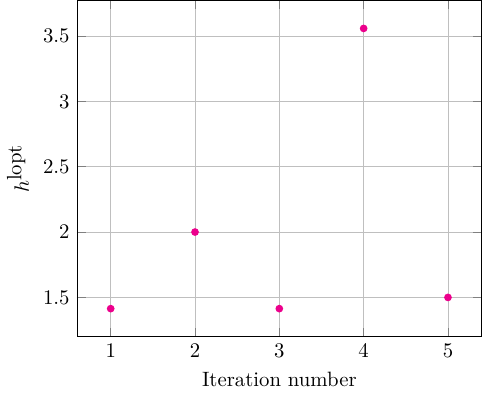}}\hfill{}\subfloat[$N=10$]{\begin{centering}
\includegraphics[scale=0.75]{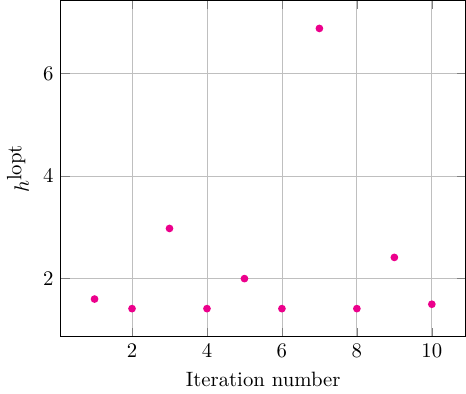}
\par\end{centering}
}\hfill{}\subfloat[$N=25$]{\begin{centering}
\includegraphics[scale=0.75]{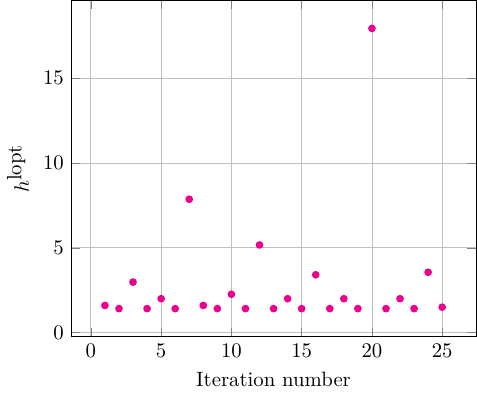}
\par\end{centering}
}\hfill{}\subfloat[$N=50$]{\begin{centering}
\includegraphics[scale=0.75]{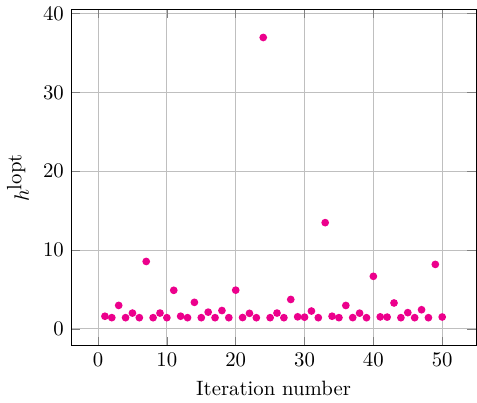}
\par\end{centering}
}\caption{Locally optimal stepsizes $h^{\textrm{lopt}}$
vs.\ iteration number for $N=5,10,25,50$ with $L=1$.
These optimized methods utilize long steps $h_i>2/L$ for some $i$, much alike how Young's method \cite{young1953} uses long steps satisfying $1 <h_{i}<L/\mu$ for some $i$ to achieve an accelerated rate for $L$-smooth and $\mu$-strongly convex quadratics.
\label{fig:lopt_step_size_AWM}}
\end{figure}

{ 
\subsubsection{
Acceleration is impossible without long steps
}
\label{subsubsec:lowero1k}
Consider the univariate functions
\begin{align*}
f_1(x)&=
\left\{
\begin{array}{ll}
\frac{LR}{2Nh+1}|x|-\frac{LR^2}{2(2Nh+1)^2}&\text{if }|x|\ge \frac{R}{2Nh+1}\\
\frac{L}{2}x^2&\text{otherwise}
\end{array}
\right.\\
f_2(x)&=\frac{L}{2}x^2,
\end{align*}
which are $L$-smooth convex functions minimized at $x_\star=0$.

Consider gradient descent $x_{i}=x_{i-1}-(h_{i-1}/L) \nabla f_j(x_{i-1})$ where
for $j=1,2$ and $i\in[1:N]$ with starting point $x_0=R$.
For the sake of simplicity, consider the constant stepsize $h_i = h$ for all $i\in[0:N-1]$. 
It is straightforward to check that the objective values at iteration $N$ are
\begin{align*}
f_1(x_N)&=\frac{LR^2}{2}\frac{1}{2Nh+1}
\quad\text{(for $0\le h\le 2$)}
\\
f_2(x_N)&=\frac{LR^2}{2}(1-h)^2.
\end{align*}
The analysis of $f_1$ shows that acceleration, if possible, would require the algorithm to take long steps exceeding the range $h<2$. On the other hand, the analysis of $f_2$ shows that the constant-step gradient descent cannot use a stepsize exceeding $h<2$ as otherwise one gets divergence. 

For the general case when $h_i $ is not constant, a similar line of reasoning with $f_1$ shows that acceleration without momentum is possible only if $\{h_{i}\}_{i\in[0:N-1]}$ exceeds $h_i<2$ for some $i\in[0:N-1]$.
The reasoning and the counter examples $f_1$ and $f_2$ are based on \cite[Theorem~2]{drori2014performance} and \cite[$\mathsection$4.1.1]{taylor2017smooth}.  
}

\subsection{Optimal method for reducing gradient of smooth nonconvex functions
\label{subsec:Smooth-nonconvex-gradient-reduction}}

In this section, we construct an optimal FSFOM for decreasing the
gradient of $L$-smooth nonconvex functions. Formally, we choose the
function class
\[
\mathcal{F}=\{f\,|\,f\in\LSmth,\,f\text{ has a global minimizer }x_{\star}\},
\]
performance measure\footnote{In the nonconvex setup, performance measures such as $f(x_{N})-f(x_{\star})$
or $\|\nabla f(x_{N})\|^{2}$ may not converge to zero as $N\to\infty$
\cite[page 3, paragraph 2]{davis2019stochastic}\cite[Remark after Theorem 1]{drori2020complexity}.} 
\[
\mathcal{E}=\min_{i\in[0:N]}\|\nabla f(x_{i})\|^{2},
\]
and initial condition $\mathcal{C}=f(x_{0})-f(x_{\star})-(1/2)R^{2}\leq0$
with $R>0$. We parameterize FSFOMs in $\mathcal{M}_{N}$ as 
\begin{equation}
x_{i}=x_{i-1}-\sum_{j=0}^{i-1}\frac{h_{i,j}}{L}\nabla f(x_{j})=x_{0}-\sum_{j=0}^{i-1}\frac{\overline{h}_{i,j}}{L}\nabla f(x_{j}),
\label{eq:ncvx-ogm-g-fsfom}
\end{equation}

for $i\in[1:N]$. We solve the following instance of \eqref{eq:main-min-max-problem}:
\[
\begin{array}{ll}
\mathcal{R}^{\star}\left(\mathcal{M}_{N},\mathcal{E},\mathcal{F},\mathcal{C}\right)=\underset{M\in\mathcal{M}_{N}}{\mbox{minimize}} & \mathcal{R}\left(M,\mathcal{E},\mathcal{F},\mathcal{C}\right)\end{array}.
\]

\paragraph{Derivation of BnB-PEP-QCQP. }

Following $\mathsection$\ref{subsec:Generalization-of-QCQO-framework}  Step (i),
formulate the inner optimization problem \eqref{eq:worst-case-pfm}
as 
\begin{align*}
 & \mathcal{R}\left(M,\mathcal{E},\mathcal{F},\mathcal{C}\right)\\
 & =\left(\begin{array}{ll}
\textrm{maximize} & \min_{i\in[0:N]}\|\nabla f(x_{i})\|^{2}\\
\textrm{subject to} & f\in\mathcal{F}_{-L,L}\\
 & f(x)\geq f(x_{\star}),\quad\textup{ for all }x\in\rl^{d},\\
 & x_{i}=x_{0}-\frac{1}{L}\sum_{j=0}^{i-1}\overline{h}_{i,j}\nabla f(x_{j})\quad i\in[1:N],\\
 & f(x_{0})-f(x_{\star})\leq R^{2},\\
 & x_{\star}=0,\;f(x_{\star})=0,
\end{array}\right)%
\end{align*}
where $f$ and $x_{0},\ldots,x_{N}$ are the decision variables. Write
$\overline{h}=\{\overline{h}_{i,j}\}_{0\leq j<i\leq N}$. To follow
the interpolation argument of $\mathsection$\ref{subsec:Generalization-of-QCQO-framework}  Step (ii),
we use the following interpolation result.
\begin{lem}[Interpolation
inequality for $\mathcal{F}_{-L,L}$\label{Thm:Interpolation-inequality-smth-ncvx}]
Let $I$ be a finite index set, and let
$\{(x_{i},g_{i},f_{i})\}_{i\in I\cup\{\star\}}\subseteq\mathbb{R}^{d}\times\mathbb{R}^{d}\times\mathbb{R}$.
Let $L>0$. There exists $f\in\mathcal{F}_{-L,L}$
satisfying $f(x)\ge f(x_\star)=f_\star$ for all $x\in \rl^d$,
$f(x_{i})=f_{i}$ for all $i\in I$, and $g_{i}=\nabla f(x_{i})$
for all $i\in I$ if and only if
\footnote{
The first condition can be viewed as a discretization of the following condition \cite[Theorem 3.10]{taylor2017exact}:
$f\in\mathcal{F}_{-L,L}$ if and only if
\begin{align*}
f(y) & \geq f(x)-\frac{L}{4}\|x-y\|^{2}+\frac{1}{2}\left\langle \nabla f(x)+\nabla f(y)\mid y-x\right\rangle +\frac{1}{4L}\|\nabla f(x)-\nabla f(y)\|^{2},\quad
\forall x,y\in\rl^{d}.
\end{align*}
}
\[
\begin{alignedat}{1}f_{i} & \geq f_{j}-\frac{L}{4}\|x_{i}-x_{j}\|^{2}+\frac{1}{2}\left\langle g_{i}+g_{j}\mid x_{i}-x_{j}\right\rangle +\frac{1}{4L}\|g_{i}-g_{j}\|^{2},\quad\forall\,i,j\in I\cup\{\star\},\\
f_{\star} & \leq f_{i}-\frac{1}{2L}\|g_{i}\|^{2},\quad \forall\, i\in I,\\
g_\star&=0.
\end{alignedat}
\]
\end{lem}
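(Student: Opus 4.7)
The plan is to prove the two directions of the equivalence separately.

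\textbf{Necessity.} Given $f \in \mathcal{F}_{-L,L}$ with the stated properties, all three conditions follow from standard smooth nonconvex calculus. First, $g_\star = 0$ is the first-order optimality condition at the global minimizer $x_\star$ of the differentiable $f$. Condition 1 is the discrete form of the $\mathcal{F}_{-L,L}$ characterization cited in the footnote, which itself reduces to convex $2L$-smooth interpolation applied to $\tilde f := f + (L/2)\|\cdot\|^2$. Condition 2 follows from the descent lemma: by $L$-smoothness, $f(x_i - g_i/L) \leq f_i - \|g_i\|^2/(2L)$, and combining this with the global-minimum bound $f_\star \leq f(x_i - g_i/L)$ yields the desired inequality.

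\textbf{Sufficiency.} The approach is to reduce to the convex interpolation lemma (Lemma~\ref{Thm:Interpolation-inequality-FmuL}) via the transformation $\tilde f := f + (L/2)\|\cdot\|^2$, under which $\mathcal{F}_{-L,L}$ corresponds to $\mathcal{F}_{0,2L}$. Define $\tilde f_i := f_i + (L/2)\|x_i\|^2$ and $\tilde g_i := g_i + L x_i$ for $i \in I \cup \{\star\}$. A direct algebraic computation (expanding $\|\tilde g_i - \tilde g_j\|^2$ and $\langle \tilde g_j \mid x_i - x_j\rangle$ and collecting the $L$-weighted quadratic-in-$x$ terms) shows that condition 1 is exactly equivalent to the $\mathcal{F}_{0,2L}$ interpolation condition
\begin{equation*}
\tilde f_i \geq \tilde f_j + \langle \tilde g_j \mid x_i - x_j\rangle + \frac{1}{4L}\|\tilde g_i - \tilde g_j\|^2, \qquad \forall\,i,j \in I \cup \{\star\}.
\end{equation*}
Applying Lemma~\ref{Thm:Interpolation-inequality-FmuL} with $L$ replaced by $2L$ yields an interpolating $\tilde f \in \mathcal{F}_{0,2L}$, and consequently $f := \tilde f - (L/2)\|\cdot\|^2 \in \mathcal{F}_{-L,L}$ interpolates the original data, with $\nabla f(x_\star) = \nabla \tilde f(x_\star) - L x_\star = L x_\star - L x_\star = 0$ by condition 3.

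The main obstacle is the remaining requirement $f(x) \geq f_\star$ for \emph{every} $x \in \rl^d$: having $\nabla f(x_\star) = 0$ alone does not force $x_\star$ to be a global minimizer of the interpolant (consider $f(x) = -(L/2)\|x - x_\star\|^2$, which lies in $\mathcal{F}_{-L,L}$ with a stationary point but a global maximum at $x_\star$). Conditions 2 and 3 must therefore be leveraged during the construction. One route is to augment the data with the gradient-descent successors $y_i := x_i - g_i/L$, assigning them function values tied to the bound $f_i - \|g_i\|^2/(2L)$ and zero gradients, thereby engineering additional stationary points at values $\geq f_\star$; verifying that the enlarged data still satisfies condition 1 is the technical core. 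A cleaner alternative, which I would pursue, is to adopt an explicit construction in the spirit of \cite[Theorem 3.10]{taylor2017exact} and \cite{abbaszadehpeivasti2021exact} that is built around the global-minimum constraint from the outset: there, condition 2 guarantees that a single gradient step from any data point remains feasible with respect to $f_\star$, which is precisely what globalizes the lower bound $f \geq f_\star$.
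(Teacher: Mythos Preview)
Your proposal is essentially aligned with the paper, whose entire proof is the one-line remark that the result follows by translating \cite[Theorem~7]{drori2020complexity} into the form of \cite[Theorem~3.10]{taylor2017exact}. You do considerably more explicit work: the necessity direction is spelled out cleanly, and for sufficiency you carry the convexification $\tilde f = f + (L/2)\|\cdot\|^2$ through to the $\mathcal{F}_{0,2L}$ interpolation condition and correctly isolate the one genuinely nontrivial point---that $\nabla f(x_\star)=0$ alone does not force $f\ge f_\star$ globally, so conditions~2 and~3 must enter the construction itself. At that point, however, you also defer to the literature rather than executing a construction, so the two write-ups are on equal footing in terms of what is actually proved versus cited. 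One bibliographic correction: the reference that actually carries the explicit global-minimum interpolation for $\mathcal{F}_{-L,L}$ is \cite[Theorem~7]{drori2020complexity}; \cite{abbaszadehpeivasti2021exact} uses that result but does not re-derive it, so if you want to cite rather than construct, that is the source to point to.
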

\begin{proof} The result follows from translating \cite[Theorem 7]{drori2020complexity} into the form of \cite[Theorem 3.10]{taylor2017exact}.
(Note that journal version of \cite[Theorem 3.10]{taylor2017exact} has a sign error that was corrected in its updated arXiv version.)
\end{proof}
Now formulate the inner problem as
\begin{align*}
 & \mathcal{R}\left(M,\mathcal{E},\mathcal{F},\mathcal{C}\right)\\
= & \left(\begin{array}{l}
\textrm{maximize}\quad t\\
\textrm{subject to}\\
t\leq\|\nabla f(x_{i})\|^{2},\quad i\in[0:N],\\
f_{i}\geq f_{j}-\frac{L}{4}\|x_{i}-x_{j}\|^{2}+\frac{1}{2}\left\langle g_{i}+g_{j}\mid x_{i}-x_{j}\right\rangle +\frac{1}{4L}\|g_{i}-g_{j}\|^{2},\quad i,j\in I_{N}^{\star}:i\neq j,\\
f_{\star}\leq f_{i}-\frac{1}{2L}\|g_{i}\|^{2},\quad i\in[0:N],\\
g_{\star}=0,\;x_{\star}=0,\;f_{\star}=0,\\
x_{i}=x_{0}-\frac{1}{L}\sum_{j=0}^{i-1}\overline{h}_{i,j}\nabla f(x_{j}),\quad i\in[1:N],\\
f_{0}-f_{\star}\leq R^{2},
\end{array}\right)%
\end{align*}
where $\{x_{i},g_{i},f_{i}\}_{i\in I_{N}}\subseteq\rl^{d}\times\rl^{d}\times\rl$
and $t\in\rl$ are the decision variables. Following $\mathsection$\ref{subsec:Generalization-of-QCQO-framework}  Step (iii),
implement the Grammian transformation. Define the Grammian matrices
$H\in\rl^{d\times(N+2)}$, $G\in\mathbb{S}_{+}^{N+2}$, and $F\in\rl^{1\times(N+1)}$
using the same equations in \eqref{eq:grammian-mats}, $\{\mathbf{x}_{i},\mathbf{g}_{i},\mathbf{f}_{i}\}_{i\in I_{N}^{\star}}$
using the same encoding as \eqref{eq:bold-vec}, except for $\{{\bf x}_{i}\}_{i\in[1:N]}$,
which we define as 
\begin{align*}
 & \mathbf{x}_{i}=\mathbf{x}_{0}-\frac{1}{L}\sum_{j=0}^{i-1}\overline{h}_{i,j}\mathbf{g}_{j}\in\rl^{N+2},\quad i\in[1:N].
\end{align*}
Note, $\mathbf{x}_{i}$ is linearly parameterized by $\overline{h}$.
The matrices $B_{i,j}$, $C_{i,j}$, and $a_{i,j}$ are the same as
in \eqref{eq:ABCa-mat-vec} except that they are now parameterized
by $\overline{h}$. For $i,j\in I_{N}^{\star},$ define 
\[
\widetilde{A}_{i,j}(\overline{h})=(\mathbf{g}_{i}+\mathbf{g}_{j})\odot(\mathbf{x}_{i}-\mathbf{x}_{j}),
\]
so that 
\[
\tr G\widetilde{A}_{i,j}(\overline{h})=\left\langle g_{i}+g_{j}\mid x_{i}-x_{j}\right\rangle 
\]
holds. Under the large-scale assumption $d\geq N+2$, we equivalently
formulate the inner problem as the SDP:
\begin{align*}
 & \mathcal{R}\left(M,\mathcal{E},\mathcal{F},\mathcal{C}\right)\\
= & \left(\begin{array}{l}
\textrm{maximize}\quad t\\
\textrm{subject to}\\
t\leq\tr GC_{i,\star},\quad i\in[0:N] \quad \rhd \textsf{\;dual var.\;} \eta_{i}\geq0 \\
Fa_{i,j}-\frac{L}{4}\tr GB_{i,j}(\overline{h})+\frac{1}{2}\tr G\widetilde{A}_{i,j}(\overline{h})+\frac{1}{4L}\tr GC_{i,j}\leq0,\quad i,j\in I_{N}^{\star}:i\neq j,\quad \quad \rhd \textsf{\;dual var.\;} \lambda_{i,j}\geq0 \\
Fa_{i,\star}+\frac{1}{2L}\tr GC_{i,\star}\leq0,\quad i\in[0:N], \quad \rhd \textsf{\;dual var.\;} \tau_{i}\geq0 \\
-G\preceq0,\quad \rhd \textsf{\;dual var.\;} Z\succeq0 \\
Fa_{\star,0}\leq R^{2},\quad \quad \rhd \textsf{\;dual var.\;} \nu\geq0
\end{array}\right)%
\end{align*}
where $F\in\rl^{1\times(N+1)}$ and $G\in\rl^{(N+2)\times(N+2)}$
are the decision variables. %
Following
$\mathsection$\ref{subsec:Generalization-of-QCQO-framework}  Step (iv),
construct the dual: 
\begin{align*}
 & \mathcal{R}\left(M,\mathcal{E},\mathcal{F},\mathcal{C}\right)\\
 & =\left(\begin{array}{l}
\textrm{minimize}\quad\nu R^{2}\\
\textrm{subject to}\\
\sum_{i,j\in I_{N}^{\star}:i\neq j}\lambda_{i,j}a_{i,j}+\sum_{i\in I_{N}^{\star}}\tau_{i}a_{i,\star}+\nu a_{\star,0}=0,\\
-\sum_{i\in[0:N]}\eta_{i}C_{i,\star}+\sum_{i,j\in I_{N}^{\star}:i\neq j}\lambda_{i,j}\left(-\frac{L}{4}B_{i,j}(\overline{h})+\frac{1}{2}\widetilde{A}_{i,j}(\overline{h})+\frac{1}{4L}C_{i,j}\right)\\
\quad\quad\quad\quad\quad+\frac{1}{2L}\sum_{i\in[0:N]}\tau_{i}C_{i,\star}=Z,\\
\sum_{i\in[0:N]}\eta_{i}=1,\\
Z\succeq0,\\
\nu\geq0,\;\tau_{i}\geq0,\;\lambda_{i,j}\geq0,\quad i,j\in I_{N}^{\star}:i\neq j,\\
\eta_{i}\geq0,\quad i\in[0:N],
\end{array}\right)%
\end{align*}
where $\nu$, $\lambda$, $\eta$, $\tau,$ and $Z$ are the decision
variables. Assume that strong duality holds. Finally, following $\mathsection$\ref{subsec:Generalization-of-QCQO-framework}  Step (v),
use Lemma~\ref{Lem:quadratic-characterization-psd-1} to pose \eqref{eq:main-min-max-problem}
as the following BnB-PEP-QCQP: 
\begin{align}
 & 
 \mathcal{R}^\star\left(\mathcal{M}_N,\mathcal{E},\mathcal{F},\mathcal{C}\right)
 \nonumber \\
 & =\left(\begin{array}{l}
\textrm{minimize}\quad\nu R^{2}\\
\textrm{subject to}\\
\sum_{i,j\in I_{N}^{\star}:i\neq j}\lambda_{i,j}a_{i,j}+\sum_{i\in I_{N}^{\star}}\tau_{i}a_{i,\star}+\nu a_{\star,0}=0,\\
-\sum_{i\in[0:N]}\eta_{i}C_{i,\star}+\sum_{i,j\in I_{N}^{\star}:i\neq j}\lambda_{i,j}\left(-\frac{L}{4}\Theta_{i,j}+\frac{1}{2}\widetilde{A}_{i,j}(\overline{h})+\frac{1}{4L}C_{i,j}\right)\\
\quad\quad\quad\quad\quad+\frac{1}{2L}\sum_{i\in[0:N]}\tau_{i}C_{i,\star}=Z,\\
\sum_{i\in[0:N]}\eta_{i}=1,\\
P\text{ is lower triangular with nonnegative diagonals},\\
PP^{\top}=Z,\\
\Theta_{i,j}=B_{i,j}(\overline{h}),\quad i,j\in I_{N}^{\star}:i\neq j,\\
\nu\geq0,\;\tau_{i}\geq0,\;\lambda_{i,j}\geq0,\quad i,j\in I_{N}^{\star}:i\neq j,\\
\eta_{i}\geq0,\quad i\in[0:N],
\end{array}\right)\label{eq:BnB-PEP-final-smth-ncvx-grad-reduction}
\end{align}
where $\nu$, $\lambda$, $\eta$, $\tau$, $Z$, $P$, and $\{\Theta_{i,j}\}_{i,j\in I_{N}^{\star}:i\ne j}$
are the decision variables. Note that $\{\Theta_{i,j}\}_{i,j\in I_{N}^{\star}:i\ne j}$
is introduced as a separate decision variable to formulate the cubic
constraints arising from $B_{i,j}(\overline{h})$ as quadratic constraints.

\paragraph{Numerical results. }
Tables~\ref{tab:BnB-PEP-result-smth-ncvx-grad-reduction} and \ref{tab:BnB-PEP-result-smth-ncvx-grad-reduction2} present the results of solving \eqref{eq:BnB-PEP-final-smth-ncvx-grad-reduction} with $L=1$, $R=1$, and $N=1,\ldots,5$ using the BnB-PEP Algorithm.
We compare the computed optimal FSFOM  with the FSFOMs defined by
\begin{equation}
h_{i,j}=\begin{cases}
1/L & \textrm{if }j=i-1,\\
0 & \textrm{if }j\in[0:i-2],
\end{cases}\tag{GD}\label{eq:GD}
\end{equation}
which is gradient descent and
\begin{equation}
h_{i,j}=\begin{cases}
2/(\sqrt{3}L) & \textrm{if }j=i-1,\\
0 & \textrm{if }j\in[0:i-2],
\end{cases}\tag{AKZ}\label{eq:AKZ}
\end{equation}
which was proposed by Abbaszadehpeivasti, de Klerk, and Zamani and has the prior state-of-the-art rate \cite{abbaszadehpeivasti2021exact}.
To clarify, the stepsizes $h=\{h_{i,j}\}_{0\leq j<i\leq N}$ and $\overline{h}=\{\overline{h}_{i,j}\}_{0\leq j<i\leq N}$ are as defined in \eqref{eq:ncvx-ogm-g-fsfom}.
We obtain the optimal $\overline{h}^{\star}$ from the BnB-PEP Algorithm, solve for $h^{\star}$ with \eqref{eq:TriLinSys-General}, and present $h^{\star}$ in the table.
The stepsizes presented in Table~\ref{tab:BnB-PEP-result-smth-ncvx-grad-reduction2} are certified to be globally optimal by Stage 3. We again observe that the stepsizes obtained at Stage 2, denoted by $\overline{h}^{\textrm{lopt}}$, were already near-optimal.

Figure~\ref{fig:Ncvx-grad-red-performance-Shuvo-vs-DeKlerk}(a) shows that the computed stepsizes $h^\star$ outperforms \eqref{eq:AKZ} on the worst-case guarantee of $\min_{i\in[0:N]}\|\nabla f(x_{i})\|^{2}$.
To ensure the comparison is precise, we set the precision of the solver to $10^{-10}$. { We observe in Figure~\ref{fig:Ncvx-grad-red-performance-Shuvo-vs-DeKlerk}(a) that the performance improvement diminishes as $N$ increases, which suggests that it will go to zero as $N\to \infty$. This observation leads conjecture that \eqref{eq:AKZ} has the exact optimal constant for the leading order term. 

\begin{conjecture}
The optimal FSFOM for reducing gradient of smooth nonconvex functions satisfies 
\[
\min_{i \in [0:N]}\|\nabla f(x_i)\|^2
\le\frac{6\sqrt{3} L (f(x_0)-f_\star)}{8N + 3 \sqrt{3} }+o(1/N)
\]
where the leading term corresponds to the rate for \eqref{eq:AKZ} \cite[Theorem~2]{abbaszadehpeivasti2021exact}.
\end{conjecture} }

Also, Figure~\ref{fig:Ncvx-grad-red-performance-Shuvo-vs-DeKlerk}(b)  shows the solution time to compute the locally optimal stepsizes $h^{\textrm{lopt}}$.

\begin{table}
\centering{}{\footnotesize{}}%
\begin{tabular}{>{\centering}b{2em}>{\centering}b{4em}>{\centering}b{4em}>{\centering}b{5em}>{\centering}b{5em}>{\centering}b{5em}>{\raggedright}m{8.5em}}
\toprule 
\multirow{2}{2em}{\centering{}{\footnotesize{}$N$}} & \multirow{2}{4em}{\centering{}{\footnotesize{}\# variables}} & \multirow{2}{4em}{\centering{}{\footnotesize{}\# constraints}} & \multicolumn{3}{c}{{\footnotesize{}Worst-case $\min_{i\in[0:N]}\|\nabla f(x_{i})\|^{2}$}} & \multirow{2}{8.5em}{\centering{}{\footnotesize{}Runtime of the BnB-PEP Algorithm}}\tabularnewline
\cmidrule{4-6} \cmidrule{5-6} \cmidrule{6-6} 
 &  &  & {\footnotesize{}Optimal} & {\footnotesize{}GD} & {\footnotesize{}AKZ} & \tabularnewline
\midrule
\midrule 
\centering{}{\footnotesize{}$1$} & {\footnotesize{}$60$ } & {\footnotesize{}$70$ } & {\footnotesize{}$0.7875254$ } & {\footnotesize{}$0.8$ } & {\footnotesize{}$0.7875254$ } & \centering{}{\footnotesize{}$0.04$ s}\tabularnewline
\midrule
\centering{}{\footnotesize{}$2$} & {\footnotesize{}$162$ } & {\footnotesize{}$177$ } & {\footnotesize{}$0.4902031$ } & {\footnotesize{}$0.5$ } & {\footnotesize{}$0.4902920$ } & \centering{}{\footnotesize{}$0.41$ s}\tabularnewline
\midrule
\centering{}{\footnotesize{}$3$} & {\footnotesize{}$365$ } & {\footnotesize{}$386$ } & {\footnotesize{}$0.3558535$ } & {\footnotesize{}$0.363636$ } & {\footnotesize{}$0.3559478$ } & \centering{}{\footnotesize{}$9.79$ s}\tabularnewline
\midrule
\centering{}{\footnotesize{}$4$} & {\footnotesize{}$723$ } & {\footnotesize{}$751$ } & {\footnotesize{}$0.2793046$ } & {\footnotesize{}$0.285714$ } & {\footnotesize{}$0.2793919$ } & \centering{}{\footnotesize{}$69.2$ s}\tabularnewline
\midrule
\centering{}{\footnotesize{}$5$} & {\footnotesize{}$1302$ } & {\footnotesize{}$1338$ } & {\footnotesize{}$0.2298589$ } & {\footnotesize{}$0.235294$ } & {\footnotesize{}$0.2299378$ } & \centering{}{\footnotesize{}$607.52$ s}\tabularnewline
\midrule
\centering{}{\footnotesize{}$10$} & {\footnotesize{}$4138$ } & {\footnotesize{}$4128$ } & {\footnotesize{}$0.1219308$ } & {\footnotesize{}$0.125$ } & {\footnotesize{}$0.1219809$ } & \centering{}{\footnotesize{} $2$ d  $15$ h}\tabularnewline
\midrule
\centering{}{\footnotesize{}$25$} & {\footnotesize{}$118653$ } & {\footnotesize{}$118433$ } & {\footnotesize{}$0.0506221$ } & {\footnotesize{}$0.051948$ } & {\footnotesize{}$0.0506457$ } & \centering{}{\footnotesize{} $4$ d  $18$ h}\tabularnewline
\bottomrule
\end{tabular}\caption{{ Comparison between the performances of the optimal method obtained by solving \eqref{eq:BnB-PEP-final-smth-ncvx-grad-reduction} with the BnB-PEP Algorithm, \eqref{eq:GD}, and \eqref{eq:AKZ}. 
The BnB-PEP Algorithm was executed on a standard laptop for $N=1,2,\ldots,10$, and on \texttt{MIT Supercloud} for $N=25$. The performance difference between the optimal method and \eqref{eq:AKZ}, while small, is genuine, as the difference is greater than precision of the solver, set to $10^{-10}$. 
\label{tab:BnB-PEP-result-smth-ncvx-grad-reduction}}}
\end{table}

\begin{table}
\centering{}{\footnotesize{}}%
\begin{tabular}{cc}
\toprule 
{\footnotesize{}$N$} & {\footnotesize{}$h^{\star}$}\tabularnewline
\midrule
\midrule 
{\footnotesize{}$1$} & {\footnotesize{}$\left[1.154700\right]$ }\tabularnewline
\midrule
{\footnotesize{}$2$} & {\footnotesize{}$\left[\begin{array}{cc}
1.157583\\
0.023142 & 1.146857
\end{array}\right]$ }\tabularnewline
\midrule
{\footnotesize{}$3$} & {\footnotesize{}$\left[\begin{array}{ccc}
1.15762\\
0.023577 & 1.149576\\
0.003462 & 0.021945 & 1.146719
\end{array}\right]$ }\tabularnewline
\midrule
{\footnotesize{}$4$} & {\footnotesize{}$\left[\begin{array}{cccc}
1.15762\\
0.023584 & 1.149611\\
0.003535 & 0.022356 & 1.149436\\
0.000549 & 0.003276 & 0.021922 & 1.146717
\end{array}\right]$ }\tabularnewline
\midrule
{\footnotesize{}$5$} & {\footnotesize{}$\left[\begin{array}{ccccc}
1.15762\\
0.023586 & 1.149611\\
0.003546 & 0.02236 & 1.149469\\
0.00061 & 0.003334 & 0.022329 & 1.149433\\
0.000149 & 0.000527 & 0.003263 & 0.02192 & 1.146717
\end{array}\right]$ }\tabularnewline

\midrule
{\footnotesize{}$10$} & {\footnotesize{} See Supplementary Information or \texttt{Github} repository}\tabularnewline
\midrule
{\footnotesize{}$25$} & {\footnotesize{} See Supplementary Information or \texttt{Github} repository}\tabularnewline
\bottomrule
\end{tabular}\caption{Globally optimal stepsizes obtained by solving \eqref{eq:BnB-PEP-final-smth-ncvx-grad-reduction} with the BnB-PEP Algorithm. \label{tab:BnB-PEP-result-smth-ncvx-grad-reduction2}} \end{table}

\begin{figure}[htp]%
    \centering
    \subfloat[\centering ]{{\begin{tikzpicture}
\begin{axis}[legend style= {at={(0.8,1)}, anchor=north, font=\tiny}, scale only axis, height=3.5cm, width=0.3*\textwidth, xmajorgrids, ymajorgrids, xlabel={$N$}, ylabel={Performance improvement}, yticklabel={$\pgfmathparse{\tick}%
      \pgfkeys{/pgf/number format/.cd,precision=4}\pgfmathprintnumber\pgfmathresult$%
    },scaled y ticks=false]
    \addplot[only marks, mark size={0.5pt}, style={{ultra thick}}, color={blue}]
        coordinates {
            (1,0)
            (2,-8.89588094979965e-5)
            (3,-9.429524646564769e-5)
            (4,-8.732862111487938e-5)
            (5,-7.895243331038082e-5)
            (6,-7.1278683097864e-5)
            (7,-6.465731878069203e-5)
            (8,-5.902088357867541e-5)
            (9,-5.4196648223370225e-5)
            (10,-5.034946299807552e-5)
            (11,-4.64833332882858e-5)
            (12,-4.353571245814536e-5)
            (13,-4.1028940485882126e-5)
            (14,-3.823073601853799e-5)
            (15,-3.6327501492072756e-5)
            (16,-3.4149589816623305e-5)
            (17,-3.278582669641317e-5)
            (18,-3.089963823989239e-5)
            (19,-2.994310645737519e-5)
            (20,-2.843573613922623e-5)
            (21,-2.760938706834748e-5)
            (22,-2.6609798866948642e-5)
            (23,-2.5219031202415587e-5)
            (24,-2.397546110148585e-5)
            (25,-2.358872425551467e-5)
        }
        ;
\end{axis}
\end{tikzpicture} }}%
    \qquad
    \subfloat[\centering ]{{ \begin{tikzpicture}
\begin{axis}[legend style= {at={(0.14,1)},anchor=north, font=\tiny}, scale only axis, height=3.5cm, width=0.3*\textwidth,xmajorgrids, ymajorgrids, xlabel={$N$}, ylabel={Solution time (s)}]
    \addplot[only marks, mark size={0.5pt}, style={{ultra thick}}, color={red}, mark={*}]
                coordinates {
            (1,0.070161023)
            (2,0.101102881)
            (3,0.160275843)
            (4,0.399274617)
            (5,0.659336463)
            (6,1.431591811)
            (7,2.419940641)
            (8,2.42578508)
            (9,6.265998392)
            (10,15.62016793)
            (11,8.259732616)
            (12,11.486353509)
            (13,15.461450463)
            (14,21.607581055)
            (15,27.759875428)
            (16,122.775105061)
            (17,36.444603632)
            (18,45.185740894)
            (19,61.803959703)
            (20,459.53442979)
            (21,554.363971515)
            (22,139.440754525)
            (23,509.120910092)
            (24,367.676545806)
            (25,151.821902492)
        }
        ;
\end{axis}
\end{tikzpicture}  }}%
\caption{
Numerical results associated with the stepsizes by solving \eqref{eq:BnB-PEP-final-smth-ncvx-grad-reduction} with the BnB-PEP Algorithm.
(Left) Improvement in the worst-case guarantee of
$\min_{i\in[0:N]}\|\nabla f(x_{i})\|^{2}$ vs.\ iteration count $N$.
(Right) Runtimes of the BnB-PEP Algorithm to compute locally optimal solutions (including Stages 1 and 2 but excluding Stage 3).
}\label{fig:Ncvx-grad-red-performance-Shuvo-vs-DeKlerk}
\end{figure}
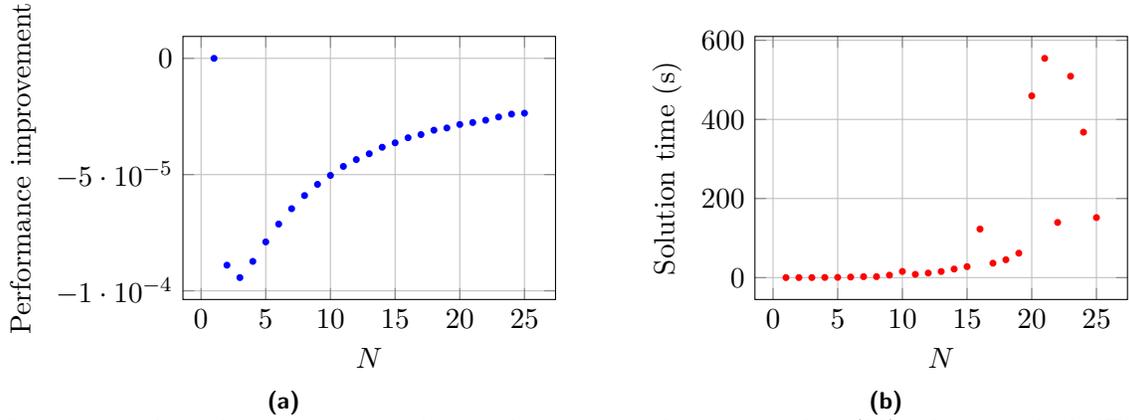

\paragraph{Momentum form of optimal FSFOM.}
An interesting observation is that the optimal FSFOM computed by the BnB-PEP Algorithm
can be equivalently written in the ``momentum form'':
\begin{equation}
\begin{alignedat}{1}y_{i+1} & =x_{i}-\frac{1}{L}\nabla f(x_{i})\\
x_{i+1} & =y_{i+1}+\zeta_{i+1}(y_{i+1}-y_{i})+\eta_{i+1}(y_{i+1}-x_{i})
\end{alignedat}
\label{eq:momentum_form}
\end{equation}
for $i\in[0:N-1]$, with coefficients $\{\zeta_i\}_{i\in[1:N]}$ and $\{\eta_i\}_{i\in[1:N]}$.
Table~\ref{tab:Momentum_form_stepsizes} shows the equivalent optimal coefficients.

The class of FSFOMs in momentum form is a strict subset of the class of FSFOMs \cite[$\mathsection$4.2]{taylor2017smooth}. 
Nesterov's fast gradient method is expressed in the momentum form \eqref{eq:momentum_form} with $\eta_i=0$ for all $i$.
Many other accelerated gradient methods such as OGM \cite{drori2014performance,kim2016optimized}, OGM-G \cite{kim2021optimizing}, Simple-OGM and SC-OGM \cite{parkparkryu2021}, FISTA \cite{beck2009fast}, FISTA-G \cite{lee2021}, EAG \cite{yoon2021accelerated,Tran-Dinh2022}, TMM \cite{van2017fastest,lee2021}, ITEM \cite{taylor2021optimal}, ORC-F$_{\flat}$, OBL-F$_{\flat}$, and OBL-G$_{\flat}$ \cite{park2021}, and M-OGM-G \cite{ZhouTianSoCheng2021_practical,lee2021} can all be expressed in the momentum form \eqref{eq:momentum_form}.

\begin{table}
\centering{}{\footnotesize{}}%
\begin{tabular}{ccc}
\toprule 
{\footnotesize{}$N$} & {\footnotesize{}$\zeta^{\star}$} & {\footnotesize{}$\eta^{\star}$}\tabularnewline
\midrule
\midrule 
{\footnotesize{}$1$} & {\footnotesize{}$[0.0]$} & {\footnotesize{}$[0.1547]$}\tabularnewline
\midrule
{\footnotesize{}$2$} & {\footnotesize{}$[0.0,0.146858]$} & {\footnotesize{}$[0.157583,0]$}\tabularnewline
\midrule
{\footnotesize{}$3$} & {\footnotesize{}$[0.0,0.149583,0.146717]$} & {\footnotesize{}$[0.157619,0,0]$}\tabularnewline
\midrule
{\footnotesize{}$4$} & {\footnotesize{}$[0.0,0.149626,0.149426,0.146702]$} & {\footnotesize{}$[0.15762,0,0,0]$}\tabularnewline
\midrule
{\footnotesize{}$5$} & {\footnotesize{}$[0.0,0.149622,0.149464,0.149417,0.146707]$} & {\footnotesize{}$[0.15762,0,0,0,0]$}\tabularnewline
\bottomrule
\end{tabular}\caption{Momentum form coefficients $\{\zeta_i^\star\}_{i\in[1:N]}$ and $\{\eta_i^\star\}_{i\in[1:N]}$ for \eqref{eq:momentum_form} of the optimal method obtained by solving \eqref{eq:BnB-PEP-final-smth-ncvx-grad-reduction} with the BnB-PEP Algorithm.
\label{tab:Momentum_form_stepsizes}
}
\end{table}

Furthermore, we observe that the optimal FSFOMs for $N=6,\dots,25$ also admit momentum forms. { 
The list of the stepsizes in the momentum form coefficients 
$\{\zeta_i^\star \}_{i\in[1:N]}$ and $\{\eta_i^\star\}_{i\in[1:N]}$
for $N=6,\ldots,25$ are provided as Supplementary Information and also as a data file at:
\begin{center}
\href{https://github.com/Shuvomoy/BnB-PEP-code/blob/main/Misc/zetaeta.jl}{\texttt{https://github.com/Shuvomoy/BnB-PEP-code/blob/main/Misc/zetaeta.jl}}
\end{center} }

\subsection{Efficient first-order method with respect to a potential function in
weakly convex setup \label{subsec:pot-bnb-pep-wcvx-Moreau}}

Consider the problem of constructing an FSFOM that efficiently reduces the subgradient magnitude of $\rho$-weakly convex functions with $L$-bounded subgradients.
Formally, we choose the function class
\[
\mathcal{F}=\{f\,|\,f\in\rhoLWcvx,\,f\text{ has a global minimizer }x_{\star}\}.
\]
Consider FSFOMs of the form
\begin{equation}
x_{i+1}=x_{i}-\frac{h}{\rho}f^{\prime}(x_{i})
\label{eq:gd-wcvx}
\end{equation}
for $i\in[0:N]$, where $f^{\prime}(x_{i})\in \partial f(x_{i})$ and $h\in \rl$ is the stepsize to be determined.
Let $\widetilde{L}=L/\rho$. Since $f\in\rhoLWcvx\Leftrightarrow f/\rho\in\mathcal{W}_{1,\widetilde{L}}$,
 consider $f\in\mathcal{W}_{1,\widetilde{L}}$ and set $\rho=1$ without loss of generality.

In this section, we show how to construct an efficient FSFOM by obtaining potential function analyses of FSFOMs and minimizing the guarantee. Unlike in previous sections, our goal here is not to construct an optimal FSFOM but rather is to construct an efficient FSFOM with an analytically tractable potential function analysis.

Using optimization to find potential function analyses of FSFOMs has been studied by Lessard, Recht, and Packard \cite{lessard2016analysis} and Taylor, Van Scoy, and Lessard \cite{taylor2018lyapunov} and the philosophy goes further back to the classical Lyapunov stability problem of control theory \cite{lyapunov1892}. Our approach, in particular, closely follows and generalizes the work of Taylor and Bach \cite[Appendix C]{taylor2019stochastic}, which finds potential function analyses of FSFOMs and optimize a certain span-search based relaxation of the FSFOMs. The relaxation retains convexity of the optimization, but it is restricted to the convex minimization setup with performance measure $f(x_{N})-f(x_{\star})$. Our proposed methodology removes this restriction; we can construct efficient FSFOMs in the convex and nonconvex setup with various performance measures. The concrete instance of this section illustrates our methodology and improves upon the prior state-of-the-art rate of Davis and Drusvyatskiy in \cite{davis2018stochasticArxivFirst, davis2018stochasticArxivSecond,davis2019stochastic}.

\subsubsection{Measuring stationarity via Moreau envelope \label{subsec:Measuring-stationarity-via-mr-env-ncvx}}
In the nonsmooth nonconvex setup, performance measures commonly used in the convex setup, such as $f(x_{N})-f(x_{\star})$ or $\textrm{dist}(0;\partial f(x_{N}))$, may not go to zero as $N\to\infty$ \cite[page 3, paragraph 2]{davis2019stochastic}. Therefore, we define a notion of approximate stationarity via the Moreau envelope.

Consider $f\in\mathcal{W}_{1,\widetilde{L}}$ and let $\hat{\rho}>1$.
The proximal operator and Moreau envelope of $f$ are respectively defined as 
\begin{align*}
\prox_{(1/\hat{\rho})f}(x) & =\argmin_{y\in\rl^n}\left\{ f(y)+\frac{\hat{\rho}}{2}\|y-x\|^{2}\right\},
\qquad
f_{(1/\hat{\rho})}(x)=\min_{y\in\rl^n}\left\{ f(y)+\frac{\hat{\rho}}{2}\|y-x\|^{2}\right\}.
\end{align*}
The Moreau envelope $f_{(1/\hat{\rho})}$ is global underestimator of $f$ that is continuously differentiable: with $y=\prox_{(1/\hat{\rho})f}(x)$, we have
\begin{equation*}
x-y=
\frac{1}{\hat{\rho}}\nabla f_{(1/\hat{\rho})}(x)
\in 
\partial f(y)
\end{equation*}
\cite[(2.13), (2.17)]{rockafellar2020characterizing}.
If $x_{\star}$ is a global minimizer of $f$, then
$f_{(1/\hat{\rho})}(x_{\star})=f(x_{\star})$ and $\|\nabla f_{(1/\hat{\rho})}(x_{\star})\|=0$.
The gradient of the Moreau envelope serves as a measure of suboptimality
since, with $y=\prox_{(1/\hat{\rho})f}(x)$, we have
\begin{align*}
&\|y-x\|=\frac{1}{\hat{\rho}}\|\nabla f_{(1/\hat{\rho})}(x)\|,
\nonumber\\
&f(y)\leq f(x),
\\
&\textrm{dist}(0,\partial f(y))\leq\|\nabla f_{(1/\hat{\rho})}(x)\|,
\nonumber
\end{align*}
for any $x\in\rl^{d}$ \cite[page 4]{davis2019stochastic}.
In other words, if $\|\nabla f_{(1/\hat{\rho})}(x)\|$ is small,
then $x$ is near some point $y$ that is nearly stationary for $f$.

We set $\hat{\rho}=2$, the simplest choice.
For a given sequence of iterates $\{x_{i}\}_{i\in[0:N]\cup\{\star\}}$, define
\begin{align}
\begin{alignedat}{1}
y_{i} & =\prox_{(1/2)f}(x_{i})\\
f^{\prime}(y_{i})&=2(x_{i}-y_{i}).
\end{alignedat}
\label{eq:secondary_iterate}
\end{align}
Choose the performance measure
\begin{equation*}
\mathcal{E}=\frac{1}{N+1}\sum_{i=0}^{N}\|\nabla f_{(1/2)}(x_{i})\|^{2}=\frac{1}{N+1}\sum_{i=0}^{N}\|f^{\prime}(y_{i})\|^{2},
\end{equation*}
which was also used in \cite{davis2019stochastic}.
Note that 
\[
\min_{i\in [0:N]}\|\nabla f_{(1/2)}(x_{i})\|^{2}\le \frac{1}{N+1}\sum_{i=0}^{N}\|\nabla f_{(1/2)}\left(x_{i}\right)\|^{2},
\]
so any guarantee on $\mathcal{E}$ translates to a guarantee on $\min_{i\in [0:N]}\|\nabla f_{(1/2)}(x_{i})\|^{2}$.

Finally, we provide a few points of clarification. The parameter $\hat{\rho}$ is not used in the method \eqref{eq:gd-wcvx} and only appears in the analysis of the algorithm. Since $\hat{\rho}$ is strictly larger than $1$, the weak convexity parameter, $y=\prox_{(1/\hat{\rho})f}(x)$ is defined as a minimizer of a strongly convex function and therefore uniquely exists. While $f^{\prime}(x_{i})\in \partial f(x_{i})$ is chosen arbitrarily in the method \eqref{eq:gd-wcvx} (the $i$\nobreakdash-th iteration of the method may use \emph{any} subgradient at $x_{i}$) the choice of $f^{\prime}(y_{i})\in \partial f(y_{i})$ (used in the analysis) is specified by \eqref{eq:secondary_iterate} and therefore is not arbitrary.

\subsubsection{Potential function analysis via BnB-PEP \label{subsec:-potential-function-bnb-pep}}

Consider the potential function
\begin{equation*}
\psi_{k}= b_{k}\left(f_{(1/2)}(x_{k})-f_{(1/2)}(x_{\star})\right)=b_{k}\left(f(y_{k})-f(x_{\star})+\|x_{k}-y_{k}\|^{2}\right),
\qquad k\in [0:N+1],
\end{equation*}
where $x_{\star}$ is a global minimizer of $f$, $\{b_k\}_{k\in [0:N+1]}$ are parameters to be determined, and $\{y_k\}_{k\in [0:N+1]}$ are as defined in \eqref{eq:secondary_iterate}.
Choose the initial condition $\mathcal{C}$ as
\begin{align*}
f_{(1/2)}(x_{0})-f_{(1/2)}(x_{\star}) & =f(y_{0})-f(x_{\star})+\|x_{0}-y_{0}\|^{2}\leq R^{2}.
\end{align*}
Again, let $x_{\star}=0$ and $f(x_{\star})=0$ without loss of generality.
If we show
\begin{equation}
\|f^{\prime}(y_{k})\|^{2}+\psi_{k+1}-\psi_{k}\leq c_{k}\|f^{\prime}(x_{k})\|^{2}
\label{eq:sufficient_condition_psi}
\end{equation}
for $k\in [0:N]$,
where $\{b_k\}_{k\in [0:N+1]}$ and $\{c_k\}_{k\in [0:N]}$ are nonnegative parameters to be determined,
then a telescoping sum provides the rate
\begin{align*}
\frac{1}{N+1}\sum_{i=0}^{N}\|f^{\prime}(y_{i})\|^{2}\leq & \frac{1}{N+1}\left(\widetilde{L}^{2}\sum_{i=0}^{N}c_{i}+\psi_{0}-\psi_{N+1}\right)\\
 & \leq\frac{1}{N+1}\left(\widetilde{L}^{2}\sum_{i=0}^{N}c_{i}+b_{0}R^{2}\right).
\end{align*}

In a potential function analysis, we effectively choose to be oblivious to how $x_k$ was generated and to the method's prior evaluations of $f$; we establish the potential function inequality \eqref{eq:sufficient_condition_psi} one iteration at a time. 
Due to this restriction, our efficient FSFOM is not expected to be optimal, but it is expected to have a simpler analytically tractable analysis.

\paragraph{Potential function analysis.}
Let
\begin{gather*}
\mathcal{V}_k(h)=\left\{(b_{k+1},b_k,c_k)\,|\,\|f^{\prime}(y_{k})\|^{2}+\psi_{k+1}\le \psi_{k}-c_{k}\|f^{\prime}(x_{k})\|^{2},\,\forall\, x_k\in\rl^d,\,f\in\mathcal{W}_{1,\widetilde{L}}\right\},
\end{gather*}
where
$x_{k+1}=x_{k}-hf^{\prime}(x_{k})$, $ y_{k}=x_{k}-\frac{1}{2}f^{\prime}(y_{k})$, and $y_{k+1}=x_{k+1}-\frac{1}{2}f^{\prime}(y_{k+1})$,
be the set of $(b_{k+_1},b_k,c_k)$ such that \eqref{eq:sufficient_condition_psi} holds for all $x_k\in\rl^d$ and $f\in\mathcal{W}_{1,\widetilde{L}}$.
Then, one could consider optimizing the FSFOM by solving
\begin{equation}
\left(\begin{array}{ll}
\underset{}{\mbox{minimize}} & \frac{1}{N+1}\left(\widetilde{L}^2\sum_{i=0}^{N}c_{i}+b_{0}R^{2}\right)\\
\mbox{subject to} & (b_{k+1},b_k,c_k)\in \mathcal{V}_k(h),\quad k\in[0:N+1],
\end{array}\right)\label{eq:pot-pep-infnt-dm}
\end{equation}
where $\{b_{i}\}_{i\in[0:N+1]}$, $\{c_{i}\}_{i\in[0:N]}$, and $h\in \rl$ are the decision variables.

However, checking $(b_{k+_1},b_k,c_k)\in \mathcal{V}_k(h)$ is difficult and \eqref{eq:pot-pep-infnt-dm} is difficult to solve, because $\mathcal{W}_{1,\widetilde{L}}$ is a function class without a known interpolation result.
In the following, we find $\tilde{\mathcal{V}}_k(h)\subseteq \mathcal{V}_k(h)$ such that membership with respect to $\tilde{\mathcal{V}}_k(h)$ is easy to check.
Then we optimize the FSFOM by solving
\begin{equation}
\left(\begin{array}{ll}
\underset{}{\mbox{minimize}} & (1/(N+1))\left(\widetilde{L}^2\sum_{i=0}^{N}c_{i}+b_{0}R^{2}\right)\\
\mbox{subject to} & (b_{k+1},b_k,c_k)\in \tilde{\mathcal{V}}_k(h),\quad k\in[0:N+1]
\end{array}\right)
\label{eq:pot-pep-infnt-dm2}
\end{equation}
where $\{b_{i}\}_{i\in[0:N+1]}$, $\{c_{i}\}_{i\in[0:N]}$, and $h\in \rl$ are the decision variables.
Note, \eqref{eq:pot-pep-infnt-dm} is upper bounded by \eqref{eq:pot-pep-infnt-dm2}, since  $ \mathcal{V}_k(h)\supseteq \tilde{\mathcal{V}}_k(h)$.

{  
In the following, we show that each constraint $(b_{k+1},b_k,c_k)\in \mathcal{V}_k(h)$ in \eqref{eq:pot-pep-infnt-dm2} $ k\in[0:N+1]$ can be formulated into a QCQP feasibility problem. We then apply the feasibility problem formulations $ k\in[0:N+1]$ to obtain the BnB-PEP-QCQP \eqref{eq:potential-bnb-pep}.}

\paragraph{Sufficient SDP for potential function inequality.}
Note, $(b_{k+1},b_k,c_k)\in \mathcal{V}_k(h)$ 
if and only if the optimal value of the following problem is less than or equal to $0$:
\begin{equation}
\left(\begin{array}{ll}
\underset{}{\mbox{maximize}} 
 &\|f^{\prime}(y_{k})\|^{2}+b_{k+1}\left(f(y_{k+1})-f(x_{\star})+\|x_{k+1}-y_{k+1}\|^{2}\right)\\
 & - b_{k}\left(f(y_{k})-f(x_{\star})+\|x_{k}-y_{k}\|^{2}\right)-c_{k}\|f^{\prime}(x_{k})\|^{2}
\\
\mbox{subject to} & x_{k+1}=x_{k}-hf^{\prime}(x_{k})\\
 & y_{k}=x_{k}-\frac{1}{2}f^{\prime}(y_{k})\\
 & y_{k+1}=x_{k+1}-\frac{1}{2}f^{\prime}(y_{k+1}),\\
 & f^{\prime}(x_{\star})=0,\;x_{\star}=0,\;f(x_{\star})=0,\\
 & f(w)\geq f(x_{\star}),\quad w\in\{x_{k},x_{k+1},y_{k},y_{k+1}\},\\
 & f\in\mathcal{W}_{1,\widetilde{L}},
\end{array}\right)\label{eq:pot-pep-infnt-dem-feasiblity}
\end{equation}
where $f\in\mathcal{W}_{1,\widetilde{L}}$ and $x_{k},x_{k+1},y_{k},y_{k+1}\in \rl^d$ are the decision variables.

Define $ \hat{\mathcal{V}}_k(h)\subseteq \mathcal{V}_k(h)$ such that $(b_{k+1},b_k,c_k)\in \hat{\mathcal{V}}_k(h)$ if and only if the optimal value of the following problem is less than or equal to $0$:
\begin{equation}
\left(\begin{array}{ll}
\underset{}{\mbox{maximize}} 
 &\|f^{\prime}(y_{k})\|^{2}+b_{k+1}\left(f(y_{k+1})-f(x_{\star})+\|x_{k+1}-y_{k+1}\|^{2}\right)\\
 & - b_{k}\left(f(y_{k})-f(x_{\star})+\|x_{k}-y_{k}\|^{2}\right)-c_{k}\|f^{\prime}(x_{k})\|^{2}\\
\mbox{subject to} & x_{k+1}=x_{k}-hf^{\prime}(x_{k})\\
 & y_{k}=x_{k}-\frac{1}{2}f^{\prime}(y_{k})\\
 & y_{k+1}=x_{k+1}-\frac{1}{2}f^{\prime}(y_{k+1}),\\
 & f^{\prime}(x_{\star})=0,\;x_{\star}=0,\;f(x_{\star})=0,\\
 & f(w)\geq f(x_{\star}),\quad w\in\{x_{k},x_{k+1},y_{k},y_{k+1}\},\\
 & f(w')\geq f(w)+\langle f'(w)\mid w'-w\rangle-\frac{1}{2}\|w'-w\|^{2},\\
 &\quad\quad\quad\quad\quad\quad\quad\quad w,w'\in\{x_{k},x_{k+1},y_{k},y_{k+1}\},\\
 & \|f'(w)\|^{2}\leq \widetilde{L}^{2},\quad w\in\{x_{k},x_{k+1},y_{k},y_{k+1}\},
\end{array}\right)\label{eq:pot-pep-infnt-dem-feasiblity2}
\end{equation}
where $f$, $x_{k}$, $x_{k+1}$, $y_{k}$, and $y_{k+1}$ are the decision variables.
Since the constraints of \eqref{eq:pot-pep-infnt-dem-feasiblity} imply the constraints of  \eqref{eq:pot-pep-infnt-dem-feasiblity2}, 
the optimal value of
 \eqref{eq:pot-pep-infnt-dem-feasiblity} is upper bounded by   \eqref{eq:pot-pep-infnt-dem-feasiblity2} and $ \mathcal{V}_k(h)\supseteq \hat{\mathcal{V}}_k(h)$.
(Since $\mathcal{W}_{1,\widetilde{L}}$ is a function class with no known interpolation result, two optimal values and the sets are not necessarily equal.)
For notational convenience, define
\begin{align*}
 & (x_{\star},f^{\prime}(x_{\star}),f(x_{\star}))=(w_{\star},g_{\star},f_{\star}),\\
 & (x_{k},f^{\prime}(x_{k}),f(x_{k}))=(w_{0},g_{0},f_{0}),\\
 & (x_{k+1},f^{\prime}(x_{k+1}),f(x_{k+1}))=(w_{1},g_{1},f_{1}),\\
 & (y_{k},f^{\prime}(y_{k}),f(y_{k}))=(w_{2},g_{2},f_{2}),\\
 & (y_{k+1},f^{\prime}(y_{k+1}),f(y_{k+1}))=(w_{3},g_{3},f_{3}).
\end{align*}
Then we can express \eqref{eq:pot-pep-infnt-dem-feasiblity2} equivalently as
\begin{equation}
\left(\begin{array}{ll}
\underset{}{\mbox{maximize}} & \|g_{2}\|^{2}+b_{k+1}\left(f_{3}-f_{\star}+\|w_{1}-w_{3}\|^{2}\right)\\
&-(b_{k}\left(f_{2}-f_{\star}+\|w_{0}-w_{2}\|^{2}\right)-c_{k}\|g_{0}\|^{2}\\
\mbox{subject to} & w_{1}=w_{0}-hg_{0}\\
 & w_{2}=w_{0}-\frac{1}{2}g_{2},\\
 & w_{3}=w_{1}-\frac{1}{2}g_{3},\\
 & (w_{\star},g_{\star},f_{\star})=(0,0,0),\\
 & f(w_{i})\geq f(x_{\star}),\quad i\in[0:3],\\
 & f_{i}\geq f_{j}+\langle g_{j}\mid w_{i}-w_{j}\rangle-\frac{1}{2}\|w_{i}-w_{j}\|^{2},\quad i,j\in[0:3]\cup\{\star\}:i\neq j,\\
 & \|g_{i}\|^{2}\leq \widetilde{L}^{2},\quad\quad i\in[0:3]\cup\{\star\},
\end{array}\right)\label{eq:pot-pep-infnt-dem-feasiblity-wcvx}
\end{equation}
where $\{w_{i},g_{i},f_{i}\}_{i\in[0:3]\cup\{\star\}}$ are the decision
variables.

Next, we use the Grammian formulation to formulate \eqref{eq:pot-pep-infnt-dem-feasiblity-wcvx} as an SDP.
For $k\in[0:N]$, let
\begin{equation}
\begin{alignedat}{1} & H^{[k]}=[w_{0}\mid g_{0}\mid g_{1}\mid g_{2}\mid g_{3}]\in\rl^{d\times5},\\
 & G^{[k]}=H^{[k]\top}H^{[k]}\in\mathbb{S}_{+}^{5},\\
 & F^{[k]}=[f_{0}\mid f_{1}\mid f_{2}\mid f_{3}]\in\rl^{1\times4}.
\end{alignedat}
\end{equation}
Note that $\rank G^{[k]}\leq d$.
Define the following notation for selecting columns and elements of $H^{[k]}$ and $F^{[k]}$:
\begin{equation*}
\begin{alignedat}{1} & \mathbf{g}_{\star}=0\in\rl^{5},\mathbf{g}_{i}=e_{i+2}\in\rl^{5},\;i\in[0:3],\\
 & \mathbf{f}_{\star}=0\in\rl^{4},\mathbf{f}_{i}=e_{i+1}\in\rl^{4},\;i\in[0:3],\\
 & \mathbf{w}_{\star}=0\in\rl^{5},\\
 & \mathbf{w}_{0}=e_{1}\in\rl^{5},\\
 & \mathbf{\mathbf{w}}_{1}=\mathbf{\mathbf{w}}_{0}-h\mathbf{\mathbf{g}}_{0}\in\rl^{5},\\
 & \mathbf{\mathbf{w}}_{2}=\mathbf{\mathbf{w}}_{0}-\frac{1}{2}\mathbf{\mathbf{g}}_{2}\in\rl^{5},\\
 & \mathbf{\mathbf{w}}_{3}=\mathbf{\mathbf{w}}_{1}-\frac{1}{2}\mathbf{\mathbf{g}}_{3}\in\rl^{5}.
\end{alignedat}
\end{equation*}

Furthermore, define
\begin{equation*}
\begin{alignedat}{1} & A_{i,j}(h)=\mathbf{g}_{j}\odot(\mathbf{w}_{i}-\mathbf{w}_{j}),\\
 & B_{i,j}(h)=(\mathbf{w}_{i}-\mathbf{w}_{j})\odot(\mathbf{w}_{i}-\mathbf{w}_{j}),\\
 & C_{i,j}=(\mathbf{g}_{i}-\mathbf{g}_{j})\odot(\mathbf{g}_{i}-\mathbf{g}_{j}),\\
 & a_{i,j}=\mathbf{f}_{j}-\mathbf{f}_{i},
\end{alignedat}
\end{equation*}
for $i,j\in[0:3]\cup\{\star\}$.
Note that $A_{i,j}(h)$ and $B_{i,j}(h)$ are affine and quadratic as functions of $h$.
This notation defined so that
\begin{equation*}
\begin{alignedat}{1} & w_{i}=H^{[k]}\mathbf{w}_{i},\;g_{i}=H^{[k]}\mathbf{g}_{i},\;f_{i}=F^{[k]}\mathbf{f}_{i},\\
 & \left\langle g_{j}\mid w_{i}-w_{j}\right\rangle =\tr G^{[k]}A_{i,j}(h),\\
 & \|w_{i}-w_{j}\|^{2}=\tr G^{[k]}B_{i,j}(h),\textrm{ and}\\
 & \|g_{i}-g_{j}\|^{2}=\tr G^{[k,]}C_{i,j}.
\end{alignedat}
\end{equation*}
for $i,j\in[0:3]\cup\{\star\}$.
Finally, define
\begin{equation*}
\begin{alignedat}{1}Q^{[k]} & = Q^{[k]}(h,b_{k+1},b_{k},c_{k})= C_{2,\star}+b_{k+1}B_{1,3}(h)-b_{k}B_{0,2}(h)-c_{k}C_{0,\star},\\
q^{[k]} & = q^{[k]}(b_{k+1},b_{k})= b_{k+1}a_{\star,3}-b_{k}a_{\star,2}
\end{alignedat}
\end{equation*}
for $k\in [0:N]$.
Assume the large-scale assumption $d\geq5$.
Using the new notation, equivalently formulate \eqref{eq:pot-pep-infnt-dem-feasiblity-wcvx} as
\begin{equation}
\left(\begin{array}{ll}
\underset{}{\mbox{maximize}} & \tr G^{[k]}Q^{[k]}+F^{[k]}q^{[k]}\\
\mbox{subject to} & F^{[k]}a_{i,\star}\leq0,\quad i\in[0:3], \quad \rhd \textsf{\;dual var.\;}\tau_{i}^{[k]}\geq0 \\
 & F^{[k]}a_{i,j}+\tr G^{[k]}\left(A_{i,j}(h)-\frac{1}{2}B_{i,j}(h)\right)\leq0,\quad i,j\in[0:3]\cup\{\star\}:i\neq j, \quad \rhd \textsf{\;dual var.\;}\lambda_{i,j}^{[k]}\geq0 \\
 & \tr G^{[k]}C_{i,\star}\leq \widetilde{L}^{2}\quad i\in[0:3]\cup\{\star\},\quad \rhd \textsf{\;dual var.\;}\eta_{i}^{[k]}\geq0\\
 & -G^{[k]}\preceq0,\quad \rhd \textsf{\;dual var.\;} Z\succeq0
\end{array}\right)\label{eq:pot-pep-primal-sdp-wcvx}
\end{equation}
 where $G^{[k]}\in \mathbb{S}_+^5$ and $F^{[k]}\in\rl^{1\times 4}$ are the decision variables.

Next, we dualize.
Define $ \tilde{\mathcal{V}}_k(h)\subseteq  \mathcal{V}_k(h)$ such that $(b_{k+1},b_k,c_k)\in \tilde{\mathcal{V}}_k(h)$ if and only if the optimal value of the following problem is less than or equal to $0$:
\begin{equation}
\left(\begin{array}{ll}
\underset{}{\mbox{minimize}} & \widetilde{L}^{2}\sum_{i\in[0:3]\cup\{\star\}}\eta_{i}^{[k]}\\
\mbox{subject to} & -Q^{[k]}+\sum_{i,j\in[0:3]\cup\{\star\}:i\neq j}\lambda_{i,j}^{[k]}\left(A_{i,j}(h)-\frac{1}{2}B_{i,j}(h)\right)+\sum_{i\in[0:3]\cup\{\star\}}\eta_{i}C_{i,\star}=Z^{[k]},\\
 & -q^{[k]}+\sum_{i\in[0:3]}\tau_{i}^{[k]}a_{i,\star}+\sum_{i,j\in[0:3]\cup\{\star\}:i\neq j}\lambda_{i,j}^{[k]}a_{i,j}=0,\\
 & \lambda_{i,j}^{[k]}\geq0,\;\eta_{i}^{[k]}\geq0,\quad i,j\in[0:3]\cup\{\star\}:i\neq j,\\
 & \tau_{i}^{[k]}\geq0,\quad i\in[0:3],\\
 & Z^{[k]}\succeq0,
\end{array}\right)\label{eq:pot-pep-dual-sdp-wcvx}
\end{equation}
where $\{\eta^{[k]}\}_{i\in [0:3]\cup\{\star\}}$, $\{\lambda^{[k]}_{i,j}\}_{i,\in [0:3]\cup\{\star\}:i\ne j}$, $\{\tau^{[k]}_i\}_{i\in [0:3]}$, and $Z^{[k]}\in \mathbb{S}^5_+$ are the decision variables.
By weak duality, the optimal value of \eqref{eq:pot-pep-primal-sdp-wcvx} is upper bounded by
\eqref{eq:pot-pep-dual-sdp-wcvx} and $ \hat{\mathcal{V}}_k(h)\supseteq \tilde{\mathcal{V}}_k(h)$.
(While we expect strong duality to usually hold, we do not need to assume it.)
Observe that for the optimal value of \eqref{eq:pot-pep-dual-sdp-wcvx} to be less than equal to zero, we must have $\eta^{[k]}=0$.
Hence, \eqref{eq:pot-pep-dual-sdp-wcvx} simplifies into the feasibility problem
\begin{equation}
\left(\begin{array}{ll}
\mbox{minimize} & 0\\
\mbox{subject to} & -Q^{[k]}+\sum_{i,j\in[0:3]\cup\{\star\}:i\neq j}\lambda_{i,j}^{[k]}\left(A_{i,j}(h)-\frac{1}{2}B_{i,j}(h)\right)=Z^{[k]},\\
 & -q^{[k]}+\sum_{i\in[0:3]}\tau_{i}^{[k]}a_{i,\star}+\sum_{i,j\in[0:3]\cup\{\star\}:i\neq j}\lambda_{i,j}^{[k]}a_{i,j}=0,\\
 & \lambda_{i,j}^{[k]}\geq0,\quad i,j\in[0:3]\cup\{\star\}:i\neq j,\\
 & \tau_{i}^{[k]}\geq0,\quad i\in[0:3],\\
 & Z^{[k]}\succeq0
\end{array}\right)
\label{eq:pot-pep-dual-sdp-wcvx2}
\end{equation}
for $k\in[0:N]$.

\paragraph{Optimizing potential function analysis.}
We have shown that existence of a feasible point for \eqref{eq:pot-pep-dual-sdp-wcvx2} implies $(b_{k+1},b_k,c_k)\in \tilde{\mathcal{V}}_k(h)$, which in turn implies \eqref{eq:sufficient_condition_psi} holds.
Finally, use Lemma \ref{Lem:quadratic-characterization-psd-1} to formulate \eqref{eq:pot-pep-infnt-dm2} as the following BnB-PEP-QCQP:
\begin{equation}
\left(\begin{array}{ll}
\underset{}{\mbox{minimize}} & \widetilde{L}^{2}\left[\frac{1}{N+1}\left(\sum_{i=0}^{N}c_{i}+\frac{R^{2}}{\widetilde{L}^{2}}b_{0}\right)\right]\\
\mbox{subject to} & -Q^{[k]}+\sum_{i,j\in[0:3]\cup\{\star\}:i\neq j}\lambda_{i,j}^{[k]}\left(A_{i,j}(h)-\frac{1}{2}\Theta_{i,j}^{[k]}\right)=Z^{[k]},\quad k\in[0:N],\\
 & -q^{[k]}+\sum_{i\in[0:3]}\tau_{i}^{[k]}a_{i,\star}+\sum_{i,j\in[0:3]\cup\{\star\}:i\neq j}\lambda_{i,j}^{[k]}a_{i,j}=0,\quad k\in[0:N],\\
 & \lambda_{i,j}^{[k]}\geq0,\quad i,j\in[0:3]\cup\{\star\}:i\neq j,\quad k\in[0:N],\\
 & \tau_{i}^{[k]}\geq0,\quad i\in[0:3],\quad k\in[0:N],\\
 &P^{[k]}\text{ is lower triangular with nonnegative diagonals},\quad k\in[0:N],\\
&P^{[k]}(P^{[k]})^{\top}=Z^{[k]},\quad k\in[0:N],\\
 & \Theta_{i,j}^{[k]}=B_{i,j}(h),\quad i,j\in[0:3]\cup\{\star\}:i\neq j,\quad k\in[0:N],
\end{array}\right)\label{eq:potential-bnb-pep}
\end{equation}
where the decision variables are $\{b_{k}\}_{k\in[0:N+1]}$, $\{c_{k}\}_{k\in[0:N]}$, $\{\lambda^{[k]}\}_{k\in[0:N]}$, $\{\tau^{[k]}\}_{k\in[0:N]}$, $\{Z^{[k]}\}_{k\in[0:N]}$, $\{P^{[k]}\}_{k\in[0:N]}$, $\{\Theta^{[k]}\}_{k\in[0:N]}$, and $h$.
We call $\lambda^{[k]}$, $\tau^{[k]}$, and $Z^{[k]}$ the inner-dual variables.
Note that $\{\Theta^{[k]}\}_{k\in[0:N]}$
is introduced as a separate decision variable to formulate the cubic constraints arising from $B_{i,j}(h)$ as quadratic constraints.
A feasible solution of \eqref{eq:potential-bnb-pep} provides a performance guarantee on the FSFOM defined by the stepsize $h$.

\subsubsection{Numerical results and analytical convergence proofs \label{subsec:Numerical-results-ncvx-moreau}}
{Tables~\ref{tab:BnB-PEP-result-potential-PEP} and \ref{tab:Globally-optimal-stepsize-potential} shows the results of solving \eqref{eq:potential-bnb-pep} with $\widetilde{L}=1$, $R=0.1$ and $N=1,\ldots,5,10,25$ using the BnB-PEP Algorithm. Similar to our previous experiments, we empirically observe that the locally optimal stepsizes obtained at Stage 2, denoted by $h^{\textrm{lopt}}$, were already near-optimal. This motivates us to apply just the first two stages of the BnB-PEP Algorithm for $N=26,\ldots,100$.} In Figure~\ref{fig:PotBnBPEP-performance-Shuvo-vs-Davis}, we compare the locally optimal stepsizes $h^{\textrm{lopt}}$ with the stepsize  $h=R / (\widetilde{L} \sqrt{(N+1)})$ reported in the proof of \cite[Theorem 3.1]{davis2019stochastic}. %

In the following, we use the numerical results to obtain an analytical form of the stepsize and its convergence proof.

\begin{figure}[htp]%
    \centering
    \subfloat[\centering ]{{\begin{tikzpicture}
\begin{loglogaxis}[legend style= {at={(0.25,0.25)},anchor=north, font=\tiny}, scale only axis, height=3.5cm, width=0.33*\textwidth, xmajorgrids, ymajorgrids, xlabel={$N$ (log scale)}, 
ylabel style={align=center}, ylabel=Objective of \eqref{eq:potential-bnb-pep}\\ (log scale),
legend pos={south west}]
    \addplot+[no marks, style={{ultra thick}}, color={blue}]
        coordinates {
            (1,0.04713563001512403)
            (2,0.04845040998952909)
            (3,0.0484500045381293)
            (4,0.04810228615194659)
            (5,0.047634255912512385)
            (6,0.04713128219341468)
            (7,0.04662144568932517)
            (8,0.04611391026157701)
            (9,0.04562013882339357)
            (10,0.04514259186821015)
            (11,0.04468178180541904)
            (12,0.04423821218434921)
            (13,0.043811600519106125)
            (14,0.04340064825710929)
            (15,0.043005317719817235)
            (16,0.04262452681559021)
            (17,0.04225710770334929)
            (18,0.04190243822920164)
            (19,0.04155987358117629)
            (20,0.04122844211729902)
            (21,0.04090758288929477)
            (22,0.04059678062040756)
            (23,0.04029527412486531)
            (24,0.040002612707912596)
            (25,0.039718397725270534)
            (26,0.03944207928899671)
            (27,0.039173272691111)
            (28,0.03891158114945758)
            (29,0.038656664969490745)
            (30,0.03840812192534003)
            (31,0.038165760844385326)
            (32,0.0379292634523038)
            (33,0.03769824170764589)
            (34,0.037472581445060285)
            (35,0.03725202950874836)
            (36,0.03703633054932841)
            (37,0.036825304496470986)
            (38,0.03661877604565877)
            (39,0.03641652880998451)
            (40,0.03621844301755849)
            (41,0.03602434291262462)
            (42,0.03583403978694495)
            (43,0.03564740141835865)
            (44,0.03546431569068986)
            (45,0.035284621465091155)
            (46,0.03510820402817326)
            (47,0.03493497668184052)
            (48,0.034764814885939674)
            (49,0.03459762038260225)
            (50,0.03443331159803701)
            (51,0.034271795265313675)
            (52,0.03411298635007747)
            (53,0.03395680619077775)
            (54,0.0338031758527688)
            (55,0.03365203451991424)
            (56,0.03350328714570988)
            (57,0.03335685928275671)
            (58,0.03321269447474864)
            (59,0.0330707175351753)
            (60,0.032930861292164514)
            (61,0.03279306602635888)
            (62,0.03265728165629139)
            (63,0.032523446575143874)
            (64,0.03239149248535876)
            (65,0.03226139621050733)
            (66,0.032133102595911546)
            (67,0.032006540002241464)
            (68,0.03188167275225543)
            (69,0.031758478749889206)
            (70,0.03163687507773777)
            (71,0.0315168198664522)
            (72,0.03139831994220922)
            (73,0.031281339948580386)
            (74,0.03116584266846454)
            (75,0.031051802558906695)
            (76,0.030939209696983664)
            (77,0.030828023232226547)
            (78,0.030718194071301927)
            (79,0.030609690857569018)
            (80,0.03050247956196323)
            (81,0.030396533666695057)
            (82,0.030291825784850696)
            (83,0.030188327582070337)
            (84,0.030086016441736214)
            (85,0.029984867560810157)
            (86,0.029884857364928255)
            (87,0.029785963227971942)
            (88,0.029688162322491152)
            (89,0.029591434932671638)
            (90,0.029495758024909197)
            (91,0.029401110935393375)
            (92,0.029307474759921577)
            (93,0.02921483004432513)
            (94,0.02912315667414432)
            (95,0.029032436665639032)
            (96,0.028942651888894947)
            (97,0.02885378435756249)
            (98,0.028765816570943657)
            (99,0.02867873164269589)
            (100,0.028592513854800824)
        }
        ;
    \addlegendentry {$h=R / (\widetilde{L} \sqrt{(N+1)})$}
    \addplot+[no marks, style={{ultra thick}}, color={red}]
        coordinates {
            (1,0.039759637697553696)
            (2,0.03963955522616405)
            (3,0.039477384477894)
            (4,0.0393252881780542)
            (5,0.03916615310639766)
            (6,0.03904769206817452)
            (7,0.038937307782696585)
            (8,0.038807204033935117)
            (9,0.03869779541285107)
            (10,0.03859248417052684)
            (11,0.03847476724198924)
            (12,0.03835137783577863)
            (13,0.03822784601241086)
            (14,0.03808713100147117)
            (15,0.03794502173575343)
            (16,0.037800410424409604)
            (17,0.037649262699834214)
            (18,0.037492660526387125)
            (19,0.037334529519285245)
            (20,0.037174136914476526)
            (21,0.03701215212671709)
            (22,0.036849950716402596)
            (23,0.03668744650191996)
            (24,0.03652505087944691)
            (25,0.036363337458757285)
            (26,0.036202285959249896)
            (27,0.036042093085762714)
            (28,0.03588297168754626)
            (29,0.035725051232756794)
            (30,0.0355684247184113)
            (31,0.03541329413868865)
            (32,0.03525970494235729)
            (33,0.035107769741258045)
            (34,0.034957506503890914)
            (35,0.03480890380332273)
            (36,0.03466188753593425)
            (37,0.034516459778486185)
            (38,0.03437267744002289)
            (39,0.03423037010421024)
            (40,0.034089670226034215)
            (41,0.03395058451262851)
            (42,0.03381315536639142)
            (43,0.033677387600047105)
            (44,0.033543224572107545)
            (45,0.03341070027170489)
            (46,0.0332798260240649)
            (47,0.033150515411984494)
            (48,0.03302271490171093)
            (49,0.03289640367716292)
            (50,0.03277155757092534)
            (51,0.03264810461724515)
            (52,0.032526020472729636)
            (53,0.032405279049054436)
            (54,0.03228585638274151)
            (55,0.03216768927487656)
            (56,0.03205083364019659)
            (57,0.03193527542850474)
            (58,0.03182098295401405)
            (59,0.03170796107241068)
            (60,0.03159619541966722)
            (61,0.031485661025966456)
            (62,0.031376329340538016)
            (63,0.031268195227717246)
            (64,0.031161223838774767)
            (65,0.0310553820698103)
            (66,0.03095066695413229)
            (67,0.03084710428336937)
            (68,0.030744654931846718)
            (69,0.030643268993276785)
            (70,0.030543021716880527)
            (71,0.030443902134332738)
            (72,0.03034579759406285)
            (73,0.03024869148638074)
            (74,0.030152571782721918)
            (75,0.030057402574016602)
            (76,0.029963130286215747)
            (77,0.029869754706892244)
            (78,0.029777288130281653)
            (79,0.0296857174832792)
            (80,0.02959503454390068)
            (81,0.0295052226671537)
            (82,0.029416267298326342)
            (83,0.029328156032808354)
            (84,0.029240872152558425)
            (85,0.029154402038358666)
            (86,0.029068731149837938)
            (87,0.02898384587960172)
            (88,0.02889973452914498)
            (89,0.028816380808431782)
            (90,0.028733776231333042)
            (91,0.028651908956348105)
            (92,0.02857076572924659)
            (93,0.028490335473692015)
            (94,0.028410608897771174)
            (95,0.028331574813981124)
            (96,0.02825322318744571)
            (97,0.028175544292044483)
            (98,0.028098529101150953)
            (99,0.028022168432628034)
            (100,0.027946451842920217)
        }
        ;
    \addlegendentry {$\textrm{BnB-PEP}$}
\end{loglogaxis}
\end{tikzpicture} }}%
    \qquad
    \subfloat[\centering ]{{\begin{tikzpicture}
\begin{axis}[legend style= {at={(0.14,1)},anchor=north, font=\tiny}, scale only axis, height=3.5cm, width=0.33*\textwidth, ,xmajorgrids, ymajorgrids, xlabel={$N$}, ylabel={Solution time (s)}]
    \addplot[only marks, mark size={0.5pt}, style={{ultra thick}}, color={red}, mark={*}]
        coordinates {
            (1,5.03)
            (2,7.0819306)
            (3,8.772874)
            (4,16.3932434)
            (5,13.662243600000002)
            (6,13.661380000000001)
            (7,16.2505405)
            (8,18.4204801)
            (9,22.0906018)
            (10,32.4041902)
            (11,24.6387834)
            (12,36.5166363)
            (13,34.9536652)
            (14,36.6288369)
            (15,50.6814173)
            (16,45.030483)
            (17,43.701384)
            (18,46.228912799999996)
            (19,59.23676)
            (20,53.383929699999996)
            (21,71.4536844)
            (22,60.1698847)
            (23,63.296884899999995)
            (24,86.3285233)
            (25,62.93156210000001)
            (26,81.0004401)
            (27,79.82094790000001)
            (28,79.5518086)
            (29,84.3988849)
            (30,85.0204447)
            (31,96.4525628)
            (32,97.2546294)
            (33,98.6614084)
            (34,103.74087890000001)
            (35,103.49795879999999)
            (36,105.4225929)
            (37,115.99866190000002)
            (38,114.7861618)
            (39,124.4150066)
            (40,108.9217708)
            (41,108.5791666)
            (42,122.95919070000001)
            (43,259.6486347)
            (44,142.1474421)
            (45,132.427413)
            (46,161.3268218)
            (47,144.1126442)
            (48,157.1236193)
            (49,155.4406018)
            (50,150.3030252)
            (51,186.1156129)
            (52,140.0802644)
            (53,163.7652453)
            (54,164.4067707)
            (55,156.6864597)
            (56,158.6023222)
            (57,162.5751162)
            (58,194.699457)
            (59,197.9608605)
            (60,182.9809483)
            (61,503.67210079999995)
            (62,219.214967)
            (63,185.3447558)
            (64,185.72916239999998)
            (65,191.6997708)
            (66,250.30001579999998)
            (67,206.7774866)
            (68,207.57624209999997)
            (69,198.0674656)
            (70,224.47835880000002)
            (71,220.738943)
            (72,225.1516027)
            (73,294.3570224)
            (74,229.79976249999999)
            (75,241.5797887)
            (76,271.4422904)
            (77,239.5364949)
            (78,247.9310376)
            (79,244.78103830000003)
            (80,296.6600989)
            (81,255.3423907)
            (82,255.2086727)
            (83,303.2397917)
            (84,258.5449047)
            (85,272.182463)
            (86,292.5976837)
            (87,346.2444026)
            (88,279.88982569999996)
            (89,304.8399546)
            (90,328.2553806)
            (91,734.7230084)
            (92,328.06216159999997)
            (93,315.892359)
            (94,350.96283869999996)
            (95,381.962698)
            (96,328.3098677)
            (97,348.6993787)
            (98,395.08055240000004)
            (99,353.39237460000004)
            (100,373.3837091)
        }
        ;
\end{axis}
\end{tikzpicture} }}%
    \caption{
{   Numerical results for the locally optimal stepsizes by solving \eqref{eq:potential-bnb-pep} the BnB-PEP Algorithm. We have verified global optimality of the locally optimal stepsizes for $N=1,2,\ldots,25$.
(Left) Objective value $\widetilde{L}^{2}[(\sum_{i=0}^{N}c_{i}+(R^{2}/\widetilde{L}^{2})b_{0})/(N+1)]$  for $h=R / (\widetilde{L} \sqrt{(N+1)})$ (as prescribed by \cite{davis2019stochastic}) and for the stepsizes computed by the BnB-PEP Algorithm   vs.\ iteration count $N$.
(Right) Runtimes of the BnB-PEP Algorithm (including Stages 1 and 2 but excluding Stage 3).}
}\label{fig:PotBnBPEP-performance-Shuvo-vs-Davis}
\end{figure}
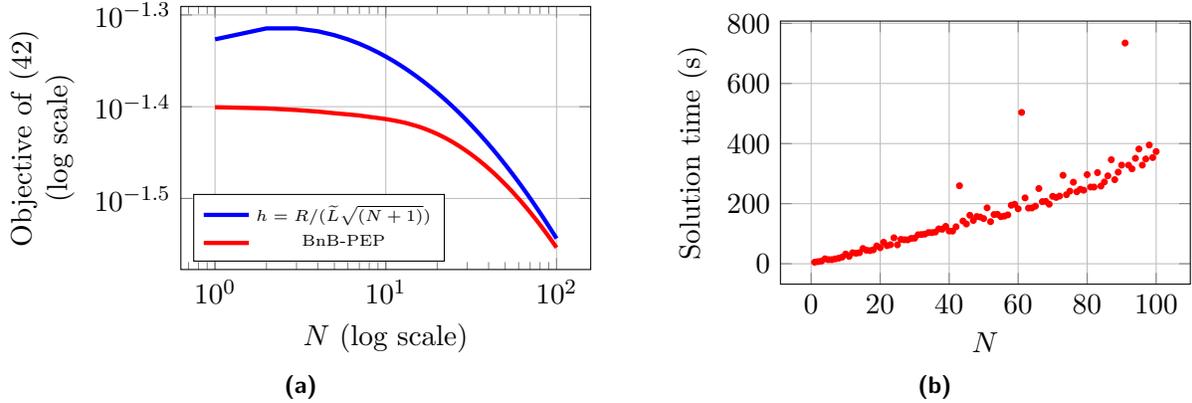

\begin{table}
\centering{}%
\begin{tabular}{>{\centering}b{2em}>{\centering}b{4em}>{\centering}b{4em}>{\centering}b{7em}>{\centering}b{7em}>{\centering}b{8em}}
\toprule 
\multirow{2}{2em}{\centering{}{\footnotesize{}$N$}} & \multirow{2}{4em}{\centering{}{\footnotesize{}\# variables}} & \multirow{2}{4em}{\centering{}{\footnotesize{}\# constraints}} & \multicolumn{2}{c}{{\footnotesize{}Worst-case $\widetilde{L}^{2}[(\sum_{i=0}^{N}c_{i}+(R^{2}/\widetilde{L}^{2})b_{0})/(N+1)]$}} & \multirow{2}{8em}{\centering{}{\footnotesize{}Runtime of the BnB-PEP Algorithm}}\tabularnewline
\cmidrule{4-5} \cmidrule{5-5} 
 &  &  & \centering{}{\footnotesize{}Optimal} & {\footnotesize{}$h=\frac{R}{(\widetilde{L} \sqrt{(N+1)})}$ } & \tabularnewline
\midrule
\midrule 
\centering{}{\footnotesize{}$1$} & {\footnotesize{}$735$ } & {\footnotesize{}$780$ } & {\footnotesize{}0.0398 } & {\footnotesize{}0.04717 } & {\footnotesize{}$0.01$ s}\tabularnewline
\midrule
\centering{}{\footnotesize{}$2$} & {\footnotesize{}$1102$ } & {\footnotesize{}$1170$ } & {\footnotesize{}$0.0396$ } & {\footnotesize{}$0.04837$ } & {\footnotesize{}$0.22$ s}\tabularnewline
\midrule
\centering{}{\footnotesize{}$3$} & {\footnotesize{}$1469$ } & {\footnotesize{}$1560$ } & {\footnotesize{}$0.0394$ } & {\footnotesize{}$0.048415$ } & {\footnotesize{}$0.34$ s}\tabularnewline
\midrule
\centering{}{\footnotesize{}$4$} & {\footnotesize{}$1836$ } & {\footnotesize{}$1950$ } & {\footnotesize{}$0.0392157$ } & {\footnotesize{}$0.0480887$ } & {\footnotesize{}$0.81$ s}\tabularnewline
\midrule
\centering{}{\footnotesize{}$5$} & {\footnotesize{}$2203$ } & {\footnotesize{}$2340$ } & {\footnotesize{}$0.039026$ } & {\footnotesize{}$0.0476315$ } & {\footnotesize{}$2.6$ s}\tabularnewline
\midrule
\centering{}{\footnotesize{}$10$} & {\footnotesize{}$4038$ } & {\footnotesize{}$4290$ } & {\footnotesize{}$0.038113$ } & {\footnotesize{}$0.0451378$ } & {\footnotesize{}$8$ h $40$ m}\tabularnewline
\midrule
\centering{}{\footnotesize{}$25$} & {\footnotesize{}$9543$ } & {\footnotesize{}$10140$ } & {\footnotesize{}$0.035692$ } & {\footnotesize{}$0.0397182$ } & {\footnotesize{}$19$ h $15$ m}\tabularnewline
\bottomrule
\end{tabular}
\caption{{  Comparison between the performances of the optimal method obtained by solving \eqref{eq:potential-bnb-pep} with the BnB-PEP Algorithm and the method with $h=R / (\widetilde{L} \sqrt{(N+1)})$ as prescribed by \cite{davis2019stochastic}. The BnB-PEP Algorithm was executed on a standard laptop for $N=1,2,\ldots,10$, and on \texttt{MIT Supercloud} for $N=25$.}
\label{tab:BnB-PEP-result-potential-PEP}}
\end{table}

\begin{table}
\centering{}{\footnotesize{}}%
\begin{tabular}{cc}
\toprule 
{\footnotesize{}$N$} & {\footnotesize{}$h^{\star}$}\tabularnewline
\midrule
\midrule 
{\footnotesize{}$1$} & {\footnotesize{}$0.01$ }\tabularnewline
\midrule
{\footnotesize{}$2$} & {\footnotesize{}$0.0099664$ }\tabularnewline
\midrule
{\footnotesize{}$3$} & {\footnotesize{}$0.0099331$ }\tabularnewline
\midrule
{\footnotesize{}$4$} & {\footnotesize{}$0.0099$ }\tabularnewline
\midrule
{\footnotesize{}$5$} & {\footnotesize{}$0.00986714$ }\tabularnewline
\midrule
{\footnotesize{}$10$} & {\footnotesize{}$0.0097061$ }\tabularnewline
\midrule
{\footnotesize{}$25$} & {\footnotesize{}$0.0092553$ }\tabularnewline
\bottomrule
\end{tabular}\caption{{  Globally optimal stepsize obtained by solving \eqref{eq:potential-bnb-pep} with
the BnB-PEP Algorithm.}  \label{tab:Globally-optimal-stepsize-potential}}
\end{table}

\paragraph{Structured inner-dual variables.}

To find an analytical proof of an FSFOM defined by $h$, i.e., to find a feasible solution of \eqref{eq:potential-bnb-pep}, we use the methodology of $\mathsection$\ref{subsec:Sparsifier} to we find the following pattern of the optimal inner-dual variables $\lambda^{\star[k]}$, $\tau^{\star[k]}$, and $Z^{\star[k]}$
for all $k\in [0:N]$:
\begin{itemize}
\item Only $\lambda_{2,3}^{\star[k]}$, $\lambda_{2,0}^{\star[k]}$, and $\lambda_{0,2}^{\star[k]}$ are nonzero. Furthermore, $\lambda_{2,0}^{\star[k]}=\lambda_{0,2}^{\star[k]}$. 
\item Only $\tau_{2}^{\star[k]}$ is nonzero.
\item Only $Z_{2,2}^{\star[k]}$, $Z_{4,2}^{\star[k]}=Z_{2,4}^{\star[k]}$, $Z_{5,2}^{\star[k]}=Z_{2,5}^{\star[k]}$, $Z_{5,4}^{\star[k]}=Z_{4,5}^{\star[k]}$, $Z_{4,4}^{[k]}$, and $Z_{5,5}^{[k]}$
are nonzero. Furthermore,
$Z_{5,2}^{\star[k]}=-Z_{4,2}^{\star[k]}$, $Z_{4,4}^{\star[k]}=Z_{5,5}^{\star[k]}$, and
$Z_{4,4}^{\star[k]}=-Z_{5,4}^{\star}$.
\end{itemize}

Since the pattern is the same for all $k$, we enforce this pattern for a given $k$ in the constraint set of \eqref{eq:potential-bnb-pep}. 
This leads to a system of equations, and after some tedious but elementary calculations, we get the simplified form 
\begin{equation}
\begin{alignedat}{1} 
 & 4+(1-2h)b_{k+1}=b_{k},\\
 & \lambda_{2,3}^{[k]}=b_{k+1},\\
 & \lambda_{2,0}^{[k]}=\lambda_{0,2}^{[k]}=2hb_{k+1},\\
 & \tau_{2}^{[k]}=b_{k}-b_{k+1},
\end{alignedat}
\label{eq:assignments-potential-pep}
\end{equation}
for all $k\in[0:N]$.
We share Mathematica code for calculating \eqref{eq:assignments-potential-pep} symbolically as follows:
\begin{center}
\url{https://github.com/Shuvomoy/BnB-PEP-code/blob/main/Misc/potential.nb}
\end{center}
The solution numerically produced by BnB-PEP Algorithm have $b_{N+1}=0$, so we use that terminal value. Furthermore, from the numerical results,  we also empirically observe that $c_k^\star$, $h^\star$, and $b_{k+1}^\star$ follow the relationship $c_{k}=h^{2}b_{k+1}$ for all $k\in [0:N]$.

\paragraph{Convergence proof 1: Analytical solution to the BnB-PEP QCQP}
We restrict%
\footnote{
While unlikely, it is possible that a stepsize $h\notin(0, 1/2]$ achieves a better performance for some parameter values.
(Our numerical experiments indicate that this is not the case.)
If so, our choice of $h\in (0, 1/2]$ excludes this better choice. Regardless, our resulting choice of $h$ and its performance guarantee are valid.
}
our consideration to $h\in (0, 1/2]$.
The recursive relationship $4+(1-2h)b_{k+1}=b_{k}$ of \eqref{eq:assignments-potential-pep} with the terminal condition $b_{N+1}=0$ implies 
\begin{equation}
b_{k}=\frac{2}{h}\left(1-(1-2h)^{N+1-k}\right)\quad\textrm{for }k\in[0:N+1].\label{eq:b_k_closed_form}
\end{equation}
This formula and the resulting values from \eqref{eq:assignments-potential-pep} indeed make
$\{b_{k}\}_{k\in[0:N+1]}$, $\{c_{k}\}_{k\in[0:N]}$, $\{\lambda^{[k]}\}_{k\in[0:N]}$, and $\{\tau^{[k]}\}_{k\in[0:N]}$ non-negative.
Plugging  values from \eqref{eq:assignments-potential-pep} and $c_{k}=h^{2}b_{k+1}$ into \eqref{eq:potential-bnb-pep} we get
\begin{align*}
 & -q^{[k]}+\sum_{i\in[0:3]}\tau_{i}^{[k]}a_{i,\star}+\sum_{i,j\in[0:3]\cup\{\star\}:i\neq j}\lambda_{i,j}^{[k]}a_{i,j}=0,\\
 & -Q^{[k]}+\sum_{i,j\in[0:3]\cup\{\star\}:i\neq j}\lambda_{i,j}^{[k]}\left(A_{i,j}(h)-\frac{1}{2}B_{i,j}(h)\right)=Z^{[k]},\\
 & Z^{[k]}=\left(\begin{array}{ccccc}
0 & 0 & 0 & 0 & 0\\
0 & \frac{1}{2}h^{2}b_{k+1} & 0 & -\frac{1}{4}hb_{k+1} & \frac{1}{4}hb_{k+1}\\
0 & 0 & 0 & 0 & 0\\
0 & -\frac{1}{4}hb_{k+1} & 0 & \frac{1}{8}b_{k+1} & -\frac{1}{8}b_{k+1}\\
0 & \frac{1}{4}hb_{k+1} & 0 & -\frac{1}{8}b_{k+1} & \frac{1}{8}b_{k+1}
\end{array}\right)
\end{align*}
for all $k\in[0:N]$. The eigenvalues of $Z^{[k]}$ are $\{0,0,0,0,(1/4)(1+2h^{2})b_{k+1}\}$,
so $Z^{[k]}\succeq0$.
Thus the values of $\{b,c,\lambda,\tau\}$ defined by  \eqref{eq:assignments-potential-pep} and \eqref{eq:b_k_closed_form} is a feasible solution of \eqref{eq:potential-bnb-pep}, and we have proved the following theorem.

\begin{thm}
\label{thm:weakly-ncvx-main}
Let $N\in\nt$.
Let $f\in\mathcal{W}_{1,\widetilde{L}}$ have a global minimizer $x_\star$.
Let $R>0$, and let $x_{0}\in\rl^{d}$ satisfy the initial condition $f(y_{0})-f(x_{\star})+\|x_{0}-y_{0}\|^{2}\leq R^{2}$.
Consider the method
\[
x_{k+1}=x_{k}-hf^{\prime}(x_{k})
\]
for $k\in[0:N]$, 
where $f^{\prime}(x_{k})$ is an arbitrary subgradient of $f$ at $x_{k}$.
Let $y_{k}=\prox_{(1/2)f}(x_{k})$ and 
\[
\psi_{k}=b_{k}\left(f(y_{k})-f(x_{\star})+\|x_{k}-y_{k}\|^{2}\right)
\]
for $k\in[0:N+1]$.
If $h\in(0,1/2]$,  $4+(1-2h)b_{k+1}=b_{k}$ for $k\in[0:N]$, $b_{N+1}=0$, and $c_{k}=h^{2}b_{k+1}$ for $k\in[0:N]$,
then
\begin{equation*}
\|f^{\prime}(y_{k})\|^{2}+\psi_{k+1}-\psi_{k}\leq c_{k}\|f^{\prime}(x_{k})\|^{2}
\end{equation*}
for all $k\in[0:N]$, and 
\begin{equation*}
\frac{1}{N+1}\sum_{i=0}^{N}\|\nabla f_{(1/2)}\left(x_{i}\right)\|^{2}\leq\frac{1}{N+1}\left(\widetilde{L}^{2}\sum_{i=0}^{N}c_{i}+b_{0}R^{2}\right).
\end{equation*}
\end{thm}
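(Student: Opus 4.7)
The plan is to exploit the BnB-PEP feasibility formulation \eqref{eq:pot-pep-dual-sdp-wcvx2} developed in $\mathsection$\ref{subsec:-potential-function-bnb-pep}: the per-step potential inequality \eqref{eq:sufficient_condition_psi} is guaranteed to hold whenever one exhibits a feasible tuple $(\lambda^{[k]},\tau^{[k]},Z^{[k]})$ of inner-dual variables for that $k$. Accordingly, my proof would proceed in two stages. First, verify the per-step inequality by plugging in the closed-form values from \eqref{eq:assignments-potential-pep} together with $c_{k}=h^{2}b_{k+1}$. Second, telescope.

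First, I would verify feasibility in \eqref{eq:pot-pep-dual-sdp-wcvx2}. Under the sparsity pattern inferred in the discussion immediately preceding the theorem, the only potentially nonzero multipliers are $\lambda_{2,3}^{[k]}$, $\lambda_{2,0}^{[k]}=\lambda_{0,2}^{[k]}$, and $\tau_{2}^{[k]}$. The linear equation on $F^{[k]}$ (involving $a_{i,j}$ and $q^{[k]}$) becomes a small system in $b_k,b_{k+1}$, whose elimination yields precisely the recursion $4+(1-2h)b_{k+1}=b_{k}$. Nonnegativity of $\{b_k\}$, $\{\lambda^{[k]}\}$, and $\{\tau^{[k]}\}$ then follows, for $h\in(0,1/2]$, from the closed form \eqref{eq:b_k_closed_form} obtained by solving the recursion backward from $b_{N+1}=0$; in particular $b_k\ge b_{k+1}\ge 0$, so $\tau_{2}^{[k]}=b_k-b_{k+1}\ge 0$. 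The semidefinite constraint is handled by computing $Z^{[k]}=-Q^{[k]}+\sum_{i,j}\lambda_{i,j}^{[k]}(A_{i,j}(h)-\tfrac{1}{2}B_{i,j}(h))$ explicitly. With $c_k=h^2 b_{k+1}$, the resulting $5\times 5$ matrix has the sparse form displayed in the excerpt, with its only nonzero $3\times 3$ principal block being a rank-one matrix $\tfrac{1}{4}(1+2h^2)b_{k+1}\, uu^\top$ for an explicit $u\in\rl^{5}$; hence $Z^{[k]}\succeq 0$, closing the feasibility argument.

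Once \eqref{eq:pot-pep-dual-sdp-wcvx2} is feasible, weak duality applied to the $\mathcal{W}_{1,\widetilde{L}}$-relaxation \eqref{eq:pot-pep-infnt-dem-feasiblity2} yields $(b_{k+1},b_k,c_k)\in\tilde{\mathcal V}_k(h)\subseteq\mathcal V_k(h)$, which is exactly the per-step inequality $\|f'(y_k)\|^2+\psi_{k+1}-\psi_k\le c_k\|f'(x_k)\|^2$. Telescoping over $k=0,\ldots,N$ gives
\begin{equation*}
\sum_{i=0}^{N}\|f'(y_i)\|^2\le \sum_{i=0}^{N} c_i \|f'(x_i)\|^2 + \psi_0 - \psi_{N+1}.
\end{equation*}
Since $f\in\mathcal{W}_{1,\widetilde L}$ has subgradients bounded by $\widetilde L$, $\sum_i c_i\|f'(x_i)\|^2\le \widetilde L^2 \sum_i c_i$. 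The terminal condition $b_{N+1}=0$ makes $\psi_{N+1}=0$, and the initial condition bounds $\psi_0\le b_0 R^2$. Combining these with the identity $\|\nabla f_{(1/2)}(x_i)\|=\|f'(y_i)\|$ noted in $\mathsection$\ref{subsec:Measuring-stationarity-via-mr-env-ncvx} and dividing by $N+1$ delivers the stated bound.

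The main obstacle is the algebraic verification of $Z^{[k]}\succeq 0$: the explicit computation of $Z^{[k]}$ from the definitions of $Q^{[k]}$, $A_{i,j}(h)$, $B_{i,j}(h)$ is tedious, involves the cross-terms between $\mathbf w_0,\mathbf g_0,\mathbf g_2,\mathbf g_3$ and the choice $c_k=h^2 b_{k+1}$ (which is what causes the $\mathbf g_0$-diagonal entry to collapse to zero outside the rank-one block). Since the structure comes from the pattern mined in $\mathsection$\ref{subsec:Sparsifier}, the calculation reduces to a $5\times 5$ matrix identity; symbolic computation (as referenced by the Mathematica notebook in the paper) confirms it, but writing it out by hand is where the bookkeeping is most error-prone.
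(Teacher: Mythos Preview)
Your proposal is correct and follows essentially the same route as the paper's primary proof (``Convergence proof 1''): verify feasibility of \eqref{eq:pot-pep-dual-sdp-wcvx2} by plugging in the sparse multipliers from \eqref{eq:assignments-potential-pep}, check nonnegativity via the closed form \eqref{eq:b_k_closed_form}, compute $Z^{[k]}$ explicitly and observe it is rank-one with nonzero eigenvalue $\tfrac{1}{4}(1+2h^{2})b_{k+1}$, and then telescope using $\|f'(x_k)\|\le\widetilde L$, $\psi_{N+1}=0$, and $\psi_0\le b_0R^2$. The paper additionally offers a second, equivalent ``classical'' proof that unpacks the feasibility certificate into a direct weighted sum of four weak-convexity/minimizer inequalities, but this is just a translation of the same argument rather than a different method.
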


To find the optimal $h^\star$, one can minimize the bound of Theorem~\ref{thm:weakly-ncvx-main}.
For notational simplicity, define $\kappa=R/L$.
With \eqref{eq:b_k_closed_form}, the analytical performance measure minimizes is
\begin{align}
 & \frac{\widetilde{L}^{2}}{N+1}\left(\sum_{i=0}^{N}c_{i}+\kappa^{2}b_{0}\right)\nonumber \\
= & \frac{\widetilde{L}^{2}}{N+1}\left[-1+(1-2h)^{N+1}+2h(N+1)+\kappa^{2}\frac{2}{h}\left(1-(1-2h)^{N+1}\right)\right].\label{eq:eq-to-solve-h-pot-bnb-pep}
\end{align}
As an aside, one can directly verify the nonnegativity of \eqref{eq:eq-to-solve-h-pot-bnb-pep} with Bernoulli's lower bound inequality, which states
that $(1+x)^{r}\geq1+rx$ for any positive integer $r\geq1$ and any
real $x\geq-1$ \cite[page 1]{UsefulInequalities}.
Plotting \eqref{eq:eq-to-solve-h-pot-bnb-pep} for different values $\kappa$ and $N$ reveals that it has a unique minimum in $h$.
Hence, the optimal $h^\star$ can be found by setting the derivative equal to zero:
\[
\frac{2\kappa^{2}\left((1-2h)^{N}-1\right)}{h^{2}}+\frac{4\kappa^{2}N(1-2h)^{N}}{h}-2(N+1)\left((1-2h)^{N}-1\right)=0.
\]
However, this equation does not seem to admit a simple algebraic solution.

To find a simpler analytical stepsize, we construct an upper bound of \eqref{eq:eq-to-solve-h-pot-bnb-pep}
that does admits a closed-form minimizer:
\begin{align}
 & \frac{\widetilde{L}^{2}}{N+1}\left[-1+(1-2h)^{N+1}+2h(N+1)+\kappa^{2}\frac{2}{h}\left(1-(1-2h)^{N+1}\right)\right]\nonumber \\
\overset{a)}{<} & \frac{\widetilde{L}^{2}}{N+1}\left[-1+\frac{1}{2(N+1)h+1}+2h(N+1)+\kappa^{2}\frac{2}{h}\right]\nonumber \\
\overset{b)}{<} & \frac{\widetilde{L}^{2}}{N+1}\left[-1+\frac{1}{2(N+1)h}+2h(N+1)+\kappa^{2}\frac{2}{h}\right]\nonumber \\
= & \frac{\widetilde{L}^{2}}{N+1}\left[-1+\frac{1}{2h}\left(\frac{1}{N+1}+4\kappa^{2}\right)+2h(N+1)\right].\label{eq:upper-bound-for-cost}
\end{align}
Here, $a)$ uses $1-(1-2h)^{N+1}<1$ and
\[
(1-2h)^{N+1}\leq\frac{1}{2(N+1)h+1},
\]
that follows from Bernoulli's upper bound inequality $(1+a)^{r}\leq1/(1-ra)$
for $a\in[-1,0],r\in\nt$ \cite[page 1]{UsefulInequalities} along
with $h\in(0,1/2]$ and $b)$ uses $1/\left(2(N+1)h+1\right)<1/\left(2(N+1)h\right)$.
The minimum of \eqref{eq:upper-bound-for-cost} is achieved at
\[
h_{\textup{ub}}^{\star}=\frac{\sqrt{4\kappa^{2}(N+1)+1}}{2(N+1)}.
\]
Plugging this back into \eqref{eq:upper-bound-for-cost}, we have the following analytical performance guarantee:
\begin{equation*}
\frac{\widetilde{L}^{2}}{N+1}\left(-1+\frac{1}{2h_{\textup{ub}}^{\star}}\left(\frac{1}{N+1}+4\kappa^{2}\right)+2h_{\textup{ub}}^{\star}(N+1)\right)=\frac{\widetilde{L}^{2}\left(2\sqrt{4\kappa^{2}(N+1)+1}-1\right)}{N+1}.
\end{equation*}
We have proved the following corollary.

\begin{cor}
\label{cor:main-cor}
In the setup of Theorem~\ref{thm:weakly-ncvx-main}, the choice
\[
h=\frac{\sqrt{4\kappa^{2}(N+1)+1}}{2(N+1)},
\]
yields the rate
\[
\frac{1}{N+1}\sum_{i=0}^{N}\|\nabla f_{(1/2)}(x_{i})\|^{2}\le \frac{\widetilde{L}^{2}\left(2\sqrt{4\kappa^{2}(N+1)+1}-1\right)}{N+1}.
\]
\end{cor}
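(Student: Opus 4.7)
The plan is to instantiate Theorem~\ref{thm:weakly-ncvx-main} at the specific stepsize $h = \sqrt{4\kappa^2(N+1)+1}/(2(N+1))$ and then carry out the two arithmetic steps that the preceding derivation has already set up: optimizing the upper bound \eqref{eq:upper-bound-for-cost} of the analytical performance measure \eqref{eq:eq-to-solve-h-pot-bnb-pep}, and substituting back to read off the rate. Concretely, first I would verify that $h$ lies in the admissible range $(0, 1/2]$ so that Theorem~\ref{thm:weakly-ncvx-main} applies, and then set $\{b_k\}$ by the closed form \eqref{eq:b_k_closed_form} and $c_k = h^2 b_{k+1}$; Theorem~\ref{thm:weakly-ncvx-main} then delivers the bound $(1/(N+1))(\widetilde{L}^2 \sum_{i=0}^N c_i + b_0 R^2)$, which after unfolding equals \eqref{eq:eq-to-solve-h-pot-bnb-pep}.

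Next I would apply the chain of inequalities $a), b)$ in the excerpt to pass from \eqref{eq:eq-to-solve-h-pot-bnb-pep} to the simpler surrogate \eqref{eq:upper-bound-for-cost}, namely
\[
\frac{\widetilde{L}^{2}}{N+1}\left[-1+\frac{1}{2h}\left(\frac{1}{N+1}+4\kappa^{2}\right)+2h(N+1)\right].
\]
The surrogate is a convex function of $h$ on $(0,\infty)$ of the form $A/h + Bh + \text{const}$ with $A = (1/(N+1) + 4\kappa^2)/2$ and $B = 2(N+1)$, so the first-order condition $-A/h^2 + B = 0$ produces the unique positive minimizer $h_{\textup{ub}}^{\star} = \sqrt{A/B} = \sqrt{4\kappa^{2}(N+1)+1}/(2(N+1))$. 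Substituting $h_{\textup{ub}}^{\star}$ back and using $2A/h_{\textup{ub}}^{\star} + 2B h_{\textup{ub}}^{\star} = 4\sqrt{AB} = 2\sqrt{4\kappa^{2}(N+1)+1}$ collapses the expression to $\widetilde{L}^{2}(2\sqrt{4\kappa^{2}(N+1)+1}-1)/(N+1)$, which is precisely the claimed rate.

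The only real obstacle is the verification that $h \in (0,1/2]$, since Theorem~\ref{thm:weakly-ncvx-main} and the Bernoulli-type bound used in step $a)$ both require this. The condition is equivalent to $4\kappa^2(N+1)+1 \leq (N+1)^2$, i.e., $\kappa^2 \leq \bigl((N+1)-1/(N+1)\bigr)/4$, which will need to be either assumed implicitly or handled by a short case analysis; I would note this condition and confirm that the non-negativity of $b_k$, $c_k$, $\lambda^{[k]}$, and $\tau^{[k]}$ used in the proof of Theorem~\ref{thm:weakly-ncvx-main} is preserved (for $h \in (0,1/2]$, $(1-2h) \in [0,1)$, so \eqref{eq:b_k_closed_form} gives $b_k \geq 0$ and the recursion yields $b_k - b_{k+1} = 4 - 2h\, b_{k+1} \geq 0$ since $b_{k+1} \leq 2/h$). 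Everything else is a direct computation that the excerpt has essentially spelled out.
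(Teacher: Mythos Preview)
Your proposal is correct and follows essentially the same route as the paper: instantiate Theorem~\ref{thm:weakly-ncvx-main}, pass from \eqref{eq:eq-to-solve-h-pot-bnb-pep} to the surrogate \eqref{eq:upper-bound-for-cost} via the Bernoulli bounds, minimize the convex surrogate to obtain $h_{\textup{ub}}^{\star}$, and substitute back. Your identification of the side condition $h\in(0,1/2]$, equivalently $4\kappa^{2}(N+1)+1\le (N+1)^{2}$, is a point the paper does not explicitly verify; it simply restricts attention to $h\in(0,1/2]$ a priori (with a footnote) and leaves this implicit in the phrase ``in the setup of Theorem~\ref{thm:weakly-ncvx-main}''.
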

The result of Corollary~\ref{cor:main-cor} strictly improves upon the prior state-of-the-art rate \cite[Theorem 3.1]{davis2019stochastic}
\begin{equation*}
\frac{1}{N+1}\sum_{i=0}^{N}\|\nabla f_{(1/2)}(x_{i})\|^{2}
\le 
\widetilde{L}^{2}\frac{4\kappa}{\sqrt{N+1}}
\end{equation*}
for large enough $N$ satisfying $N>(9-64\kappa^{2})/(64\kappa^{2})$.
(This stated rate is obtained by plugging the stepsize found in the proof of \cite[Theorem 3.1]{davis2019stochastic} into \cite[Equation 3.4]{davis2019stochastic} in the noiseless setup. The claimed rate \cite[Equation 3.5]{davis2019stochastic} has an error in the constant.)

\paragraph{Convergence proof 2: Classical analytical proof}
While the previous analytical proof is rigorous, its reliance on the BnB-PEP-QCQP formulation makes it inaccessible to those who do not already understand the PEP methodology. Therefore, we translate the proof into a classical form that does not depend on the PEP methodology. 

{  To clarify, the discovery of the previous proof was assisted by numerical solutions, but its correctness can be verified by humans without the aid of any numerical solvers. The benefit of the following alternate proof is its accessibility; the previous equivalent proof was equally correct and rigorous.}

\begin{proof}[Alternate proof of Theorem~\ref{thm:weakly-ncvx-main}]
The proof forms nonnegative combinations of valid inequalities and organizes the terms to establish the stated result.
The arguably mysterious weights of the nonnegative combinations correspond to the values of the inner-dual-variables listed in \eqref{eq:assignments-potential-pep}.

Note that
\begin{gather*}
f(y_{k+1})-f(y_{k})+\left\langle f^{\prime}(y_{k+1})\mid y_{k}-y_{k+1}\right\rangle -\frac{1}{2}\|y_{k}-y_{k+1}\|^{2}\leq0,\\
f(x_{k})-f(y_{k})+\left\langle f^{\prime}(x_{k})\mid y_{k}-x_{k}\right\rangle -\frac{1}{2}\|y_{k}-x_{k}\|^{2}\leq0,\\
f(y_{k})-f(x_{k})+\left\langle f^{\prime}(y_{k})\mid x_{k}-y_{k}\right\rangle -\frac{1}{2}\|x_{k}-y_{k}\|^{2}\leq0,
\end{gather*}
by weak convexity of $f$, and
\[
f(x_{\star})-f(y_{k})\leq0
\]
by the assumption that $x_\star$ is a global minimizer. Multiplying the last four inequalities with the nonnegative weights $b_{k+1}$, $2hb_{k+1}$, $2hb_{k+1}$, and $b_{k}-b_{k+1}=4-2hb_{k+1}$ (nonnegativity follows from \eqref{eq:b_k_closed_form}), respectively, and then adding them together, we obtain  
\begin{align*}
0\geq & b_{k+1}\left(f(y_{k+1})-f(y_{k})+\left\langle f^{\prime}(y_{k+1})\mid y_{k}-y_{k+1}\right\rangle -\frac{1}{2}\|y_{k}-y_{k+1}\|^{2}\right)+\\
 & 2hb_{k+1}\left(f(x_{k})-f(y_{k})+\left\langle f^{\prime}(x_{k})\mid y_{k}-x_{k}\right\rangle -\frac{1}{2}\|y_{k}-x_{k}\|^{2}\right)+\\
 & 2hb_{k+1}\left(f(y_{k})-f(x_{k})+\left\langle f^{\prime}(y_{k})\mid x_{k}-y_{k}\right\rangle -\frac{1}{2}\|x_{k}-y_{k}\|^{2}\right)+\\
 & (4-2hb_{k+1})\left(f(x_{\star})-f(y_{k})\right)\\
\overset{a)}{=} & b_{k+1}(f(y_{k+1})-f(x_{\star}))-b_{k}(f(y_{k})-f(x_{\star}))+\\
 & \frac{1}{8}b_{k+1}\Big[-4\|f^{\prime}(x_{k})\|^{2}h^{2}-2\left\langle f^{\prime}(y_{k})\mid2hf^{\prime}(x_{k})+f^{\prime}(y_{k+1})\right\rangle \\
 & +4\left\langle f^{\prime}(x_{k})\mid f^{\prime}(y_{k+1})\right\rangle h+(4h-1)\|f^{\prime}(y_{k})\|^{2}+3\|f^{\prime}(y_{k+1})\|^{2}\Big]\\
= & b_{k+1}(f(y_{k+1})-f(x_{\star}))-b_{k}(f(y_{k})-f(x_{\star}))\\
 & +\frac{1}{4}b_{k+1}\left(\|f^{\prime}(y_{k+1})\|^{2}-4h^{2}\|f^{\prime}(x_{k})\|^{2}-(1-2h)\|f^{\prime}(y_{k})\|^{2}\right)+\\
 & b_{k+1}\Big[\frac{1}{8}\Big(-4\|f^{\prime}(x_{k})\|^{2}h^{2}-2\left\langle f^{\prime}(y_{k})\mid2hf^{\prime}(x_{k})+f^{\prime}(y_{k+1})\right\rangle \\
 & +4\left\langle f^{\prime}(x_{k})\mid f^{\prime}(y_{k+1})\right\rangle h+(4h-1)\|f^{\prime}(y_{k})\|^{2}+3\|f^{\prime}(y_{k+1})\|^{2}\Big)-\\
 & \frac{1}{4}\left(\|f^{\prime}(y_{k+1})\|^{2}-4h^{2}\|f^{\prime}(x_{k})\|^{2}-(1-2h)\|f^{\prime}(y_{k})\|^{2}\right)\Big]\\
= & b_{k+1}(f(y_{k+1})-f(x_{\star}))-b_{k}(f(y_{k})-f(x_{\star}))\\
 & +\frac{1}{4}b_{k+1}\left(\|f^{\prime}(y_{k+1})\|^{2}-4h^{2}\|f^{\prime}(x_{k})\|^{2}-(1-2h)\|f^{\prime}(y_{k})\|^{2}\right)\\
 & +\frac{1}{8}\|2hf^{\prime}(x_{k})-f^{\prime}(y_{k})+f^{\prime}(y_{k+1})\|^{2}\\
\overset{b)}{\geq} & b_{k+1}(f(y_{k+1})-f(x_{\star}))-b_{k}(f(y_{k})-f(x_{\star}))\\
 & +\frac{1}{4}b_{k+1}\left(\|f^{\prime}(y_{k+1})\|^{2}-4h^{2}\|f^{\prime}(x_{k})\|^{2}-(1-2h)\|f^{\prime}(y_{k})\|^{2}\right)\\
\overset{c)}{=} & b_{k+1}(f(y_{k+1})-f(x_{\star}))-b_{k}(f(y_{k})-f(x_{\star}))+\|f^{\prime}(y_{k})\|^{2}\\
 & +\frac{b_{k+1}}{4}\|f^{\prime}(y_{k+1})\|^{2}-\frac{b_{k}}{4}\|f^{\prime}(y_{k})\|^{2}-c_{k}\|f^{\prime}(x_{k})\|^{2}\\
= & \|f^{\prime}(y_{k})\|_{2}+b_{k+1}\left(f(y_{k+1})-f(x_{\star})\|+\|y_{k+1}-w_{1}\|^{2}\right)\\
 & -b_{k}\left(f(y_{k})-f(x_{\star})+\|x_{k}-y_{k}\|^{2}\right)-c_{k}\|f^{\prime}(x_{k})\|^{2}\\
\overset{d)}{=} & \|f^{\prime}(y_{k})\|^{2}+\psi_{k+1}-\psi_{k}-c_{k}\|f^{\prime}(x_{k})\|^{2},
\end{align*}
where $a)$ uses \eqref{eq:assignments-potential-pep} and 
\begin{align*}
y_{k}-y_{k+1} & =hf^{\prime}(x_{k})-\frac{1}{2}f^{\prime}(y_{k})+\frac{1}{2}f^{\prime}(y_{k+1}),\\
x_{k}-y_{k} & =\frac{1}{2}f^{\prime}(y_{k}),
\end{align*}
 $b)$ removes the non-negative term in the previous line (the last
term), $c)$ uses \eqref{eq:assignments-potential-pep}, and
$d)$ uses the definition of $\psi_k$.
\end{proof}

{   
\textbf{Remark.}
The convergence proof above nowhere utilizes the assumption that $x_\star $ exists; it only requires that $f_\star=\inf_{x}f(x)>-\infty$. There was no a priori guarantee that we would get such a proof since the BnB-PEP-QCQP was allowed to use the existence of $x_\star$. However, BnB-PEP-QCQP chose not use that assumption in producing an optimal solution.
}

\section{Conclusion \label{sec:Conclusion}}

The contribution of the BnB-PEP methodology is threefold. First, BnB-PEP
poses the problem of finding the optimal fixed-step first-order method for convex
or nonconvex, smooth or nonsmooth optimization as a nonconvex but
practically tractable QCQP called BnB-PEP-QCQP. Second, our methodology
presents the BnB-PEP Algorithm that solves the BnB-PEP-QCQP to certifiable
global optimality. Through exploiting specific problem structures,
the BnB-PEP Algorithm outperforms the latest off-the-shelf implementations
by orders of magnitude, reducing the solution-time from hours to seconds
and weeks to minutes. Third, we test the BnB-PEP methodology on a variety
of problem setups for which the prior methodologies failed and obtain
first-order methods with bounds, some numerical and some analytical, that improve upon prior state-of-the-art
results.

As the BnB-PEP offers significantly more flexibility compared to the prior performance estimation methodologies, we expect there to be many fruitful future directions of work utilizing the BnB-PEP methodology.
In particular, using the BnB-PEP to analyze and generate
composite optimization methods \cite{taylor2017exact,taylor2018exact,kim2018another},
randomized and stochastic methods \cite{Shi2017,taylor2019stochastic},
monotone operator and splitting methods \cite{ryu2020operator,gu2020tight,lieder2021convergence,kim2021accelerated},
mirror descent methods \cite{Dragomir2021}, and
adaptive methods  \cite{pmlr-v125-barre20a}
are all interesting directions of future work. Recently, novel worst-case convergence rates for nonlinear conjugate gradient methods were established in \cite{NCGPEP2023} using the BnB-PEP methodology.

\section*{Acknowledgements}

EKR were supported by the National Research Foundation of Korea (NRF) Grant funded by the Korean Government (MSIP) [NRF-2022R1C1C1010010] and the Samsung Science and Technology Foundation (Project Number SSTF-BA2101-02). We thank Yoel Drori, Robert M. Freund, Baptiste Goujaud, and Adrien Taylor for careful reading of the manuscript and constructive feedback.

\bibliographystyle{plain}
\bibliography{BnB_PEP_manuscript}

\begin{thebibliography}{10}

\bibitem{Gurobi95}
{Gurobi 10: New Advances}.
\newblock 2021.
\newblock
  \url{https://www.gurobi.com/products/gurobi-optimizer/whats-new-current-release/}.

\bibitem{abbaszadehpeivasti2021exact}
H.~Abbaszadehpeivasti, E.~de~Klerk, and M.~Zamani.
\newblock The exact worst-case convergence rate of the gradient method with
  fixed step lengths for $l$-smooth functions.
\newblock {\em Optimization Letters}, 16(6):1649--1661, 2022.

\bibitem{Gurobi91}
T.~Achterberg.
\newblock {Non-Convex MIQCP in Gurobi 9.1: New Advances}.
\newblock 2020.
\newblock
  \url{https://cdn.gurobi.com/wp-content/uploads/2020/12/Non-Convex-MIQCP-in-Gurobi-9.1-New-Advances.pdf}.

\bibitem{Gurobi}
T.~Achterberg and E.~Towle.
\newblock {Non-Convex Quadratic Optimization: Gurobi 9.0}.
\newblock 2020.
\newblock
  \url{https://www.gurobi.com/resource/non-convex-quadratic-optimization/}.

\bibitem{barre2020}
M.~Barr\'e, A.~B. Taylor, and F.~Bach.
\newblock Principled analyses and design of first-order methods with inexact
  proximal operators.
\newblock {\em Mathematical Programming}, 2022.

\bibitem{pmlr-v125-barre20a}
M.~Barr\'e, A.~Taylor, and A.~d'Aspremont.
\newblock Complexity guarantees for {P}olyak steps with momentum.
\newblock {\em Conference on Learning Theory}, 2020.

\bibitem{Bauschke2021}
H.~H. Bauschke, W.~M. Moursi, and X.~Wang.
\newblock Generalized monotone operators and their averaged resolvents.
\newblock {\em Mathematical Programming}, 189(1--2):55--74, 2021.

\bibitem{beck2009fast}
A.~Beck and M.~Teboulle.
\newblock A fast iterative shrinkage-thresholding algorithm for linear inverse
  problems.
\newblock {\em SIAM Journal on Imaging Sciences}, 2(1):183--202, 2009.

\bibitem{benson2003solving}
H.~Y. Benson and R.~J. Vanderbei.
\newblock Solving problems with semidefinite and related constraints using
  interior-point methods for nonlinear programming.
\newblock {\em Mathematical Programming}, 95(2):279--302, 2003.

\bibitem{bertsimas2019machine}
D.~Bertsimas and J.~Dunn.
\newblock {\em Machine Learning Under a Modern Optimization Lens}.
\newblock Dynamic Ideas LLC, 2019.

\bibitem{bertsimas1997introduction}
D.~Bertsimas and J.~N. Tsitsiklis.
\newblock {\em Introduction to Linear Optimization}, volume~6.
\newblock Athena Scientific Belmont, MA, 1997.

\bibitem{bertsimas2005optimization}
D.~Bertsimas and R.~Weismantel.
\newblock {\em Optimization Over Integers}, volume~13.
\newblock Dynamic Ideas Belmont, MA, 2005.

\bibitem{BorweinLewis2006}
J.~Borwein and A.~S. Lewis.
\newblock {\em Convex Analysis and Nonlinear Optimization, 2nd Edition}.
\newblock Springer, New York, 2006.

\bibitem{boyd2004convex}
S.~Boyd and L.~Vandenberghe.
\newblock {\em Convex Optimization}.
\newblock Cambridge University Press, 2004.

\bibitem{byrd1997local}
R.~H. Byrd, G.~Liu, and J.~Nocedal.
\newblock On the local behavior of an interior point method for nonlinear
  programming.
\newblock {\em Numerical Analysis}, 1997:37--56, 1997.

\bibitem{byrd2006k}
R.~H. Byrd, J.~Nocedal, and R.~A. Waltz.
\newblock {KNITRO: An integrated package for nonlinear optimization}.
\newblock In G.~D. Pillo and M.~Roma, editors, {\em Large-Scale Nonlinear
  Optimization}, pages 35--59. Springer, 2006.

\bibitem{CandesWakinBoyd2008_enhancing}
E.~J. Cand{\`e}s, M.~B. Wakin, and S.~P. Boyd.
\newblock Enhancing sparsity by reweighted {$\ell_1$} minimization.
\newblock {\em Journal of Fourier Analysis and Applications}, 14(5):877--905,
  2008.

\bibitem{clarke1990optimization}
F.~H. Clarke.
\newblock {\em Optimization and Nonsmooth Analysis}.
\newblock SIAM, 1990.

\bibitem{cyrus2018}
S.~Cyrus, B.~Hu, B.~Van~Scoy, and L.~Lessard.
\newblock A robust accelerated optimization algorithm for strongly convex
  functions.
\newblock {\em American Control Conference}, 2018.

\bibitem{NCGPEP2023}
S.~Das~Gupta, R.~M. Freund, X.~A. Sun, and A.~B. Taylor.
\newblock Nonlinear conjugate gradient methods: worst-case convergence rates
  via computer-assisted analyses.
\newblock {\em arXiv preprint arXiv:2301.01530}, 2023.

\bibitem{davis2018stochasticArxivSecond}
D.~Davis and D.~Drusvyatskiy.
\newblock Stochastic model-based minimization of weakly convex functions.
\newblock {\em arXiv preprint arXiv:1803.06523}, 2018.

\bibitem{davis2018stochasticArxivFirst}
D.~Davis and D.~Drusvyatskiy.
\newblock Stochastic subgradient method converges at the rate $o(k^{-1/4})$ on
  weakly convex functions.
\newblock {\em arXiv preprint arXiv:1802.02988}, 2018.

\bibitem{davis2019stochastic}
D.~Davis and D.~Drusvyatskiy.
\newblock Stochastic model-based minimization of weakly convex functions.
\newblock {\em SIAM Journal on Optimization}, 29(1):207--239, 2019.

\bibitem{de2017worst}
E.~{de Klerk}, F.~Glineur, and A.~B. Taylor.
\newblock On the worst-case complexity of the gradient method with exact line
  search for smooth strongly convex functions.
\newblock {\em Optimization Letters}, 11(7):1185--1199, 2017.

\bibitem{de2020worst}
E.~de~Klerk, F.~Glineur, and A.~B. Taylor.
\newblock Worst-case convergence analysis of inexact gradient and {N}ewton
  methods through semidefinite programming performance estimation.
\newblock {\em SIAM Journal on Optimization}, 30(3):2053--2082, 2020.

\bibitem{Dragomir2021}
R.-A. Dragomir, A.~B. Taylor, A.~d'Aspremont, and J.~Bolte.
\newblock Optimal complexity and certification of {B}regman first-order
  methods.
\newblock {\em Mathematical Programming}, 194(1--2):41--83, 2022.

\bibitem{drori2014thesis}
Y.~Drori.
\newblock {\em Contributions to the Complexity Analysis of Optimization
  Algorithms.}
\newblock PhD thesis, Tel-Aviv University, Tel Aviv, Israel, 2014.

\bibitem{drori2017exact}
Y.~Drori.
\newblock The exact information-based complexity of smooth convex minimization.
\newblock {\em Journal of Complexity}, 39:1--16, 2017.

\bibitem{drori2020complexity}
Y.~Drori and O.~Shamir.
\newblock The complexity of finding stationary points with stochastic gradient
  descent.
\newblock {\em International Conference on Machine Learning}, 2020.

\bibitem{drori2020efficient}
Y.~Drori and A.~B. Taylor.
\newblock Efficient first-order methods for convex minimization: A constructive
  approach.
\newblock {\em Mathematical Programming}, 184(1):183--220, 2020.

\bibitem{drori2022oracle}
Y.~Drori and A.~B. Taylor.
\newblock On the oracle complexity of smooth strongly convex minimization.
\newblock {\em Journal of Complexity}, 68:101590, 2022.

\bibitem{drori2014performance}
Y.~Drori and M.~Teboulle.
\newblock Performance of first-order methods for smooth convex minimization: A
  novel approach.
\newblock {\em Mathematical Programming}, 145(1-2):451--482, 2014.

\bibitem{JuMPDunningHuchetteLubin2017}
I.~Dunning, J.~Huchette, and M.~Lubin.
\newblock {JuMP}: A modeling language for mathematical optimization.
\newblock {\em SIAM Review}, 59(2):295--320, 2017.

\bibitem{Fazel2003}
M.~Fazel, H.~Hindi, and S.~Boyd.
\newblock Log-det heuristic for matrix rank minimization with applications to
  {H}ankel and {E}uclidean distance matrices.
\newblock {\em American Control Conference}, 2003.

\bibitem{fiacco1990nonlinear}
A.~V. Fiacco and G.~P. McCormick.
\newblock {\em Nonlinear Programming: Sequential Unconstrained Minimization
  Techniques}.
\newblock SIAM, 1990.

\bibitem{goujaud2021super}
B.~Goujaud, D.~Scieur, A.~Dieuleveut, A.~B. Taylor, and F.~Pedregosa.
\newblock Super-acceleration with cyclical step-sizes.
\newblock {\em International Conference on Artificial Intelligence and
  Statistics}, 2022.

\bibitem{gu2020tight}
G.~Gu and J.~Yang.
\newblock Tight sublinear convergence rate of the proximal point algorithm for
  maximal monotone inclusion problems.
\newblock {\em SIAM Journal on Optimization}, 30(3):1905--1921, 2020.

\bibitem{hastie2019statistical}
T.~Hastie, R.~Tibshirani, and M.~Wainwright.
\newblock {\em Statistical Learning with Sparsity: The Lasso and
  Generalizations}.
\newblock Chapman and Hall/CRC, 2019.

\bibitem{higham2002accuracy}
N.~J. Higham.
\newblock {\em Accuracy and Stability of Numerical Algorithms}.
\newblock SIAM, 2002.

\bibitem{horn2012matrix}
R.~A. Horn and C.~R. Johnson.
\newblock {\em Matrix Analysis}.
\newblock Cambridge University Press, 2012.

\bibitem{horst2013global}
R.~Horst and H.~Tuy.
\newblock {\em Global Optimization: Deterministic Approaches}.
\newblock Springer Science \& Business Media, 2013.

\bibitem{kim2021accelerated}
D.~Kim.
\newblock Accelerated proximal point method for maximally monotone operators.
\newblock {\em Mathematical Programming}, 190(1--2):57--87, 2021.

\bibitem{kim2016optimized}
D.~Kim and J.~A. Fessler.
\newblock Optimized first-order methods for smooth convex minimization.
\newblock {\em Mathematical Programming}, 159(1):81--107, 2016.

\bibitem{kim2018another}
D.~Kim and J.~A. Fessler.
\newblock Another look at the fast iterative shrinkage/thresholding algorithm
  ({F}{I}{S}{T}{A}).
\newblock {\em SIAM Journal on Optimization}, 28(1):223--250, 2018.

\bibitem{kim2021optimizing}
D.~Kim and J.~A. Fessler.
\newblock Optimizing the efficiency of first-order methods for decreasing the
  gradient of smooth convex functions.
\newblock {\em Journal of Optimization Theory and Applications},
  188(1):192--219, 2021.

\bibitem{UsefulInequalities}
L.~Kozma.
\newblock {Useful Inequalities}.
\newblock 2021.
\newblock \url{https://www.lkozma.net/inequalities_cheat_sheet/ineq.pdf}.

\bibitem{lee2021}
J.~Lee, C.~Park, and E.~K. Ryu.
\newblock A geometric structure of acceleration and its role in making
  gradients small fast.
\newblock {\em Neural Information Processing Systems}, 2021.

\bibitem{Legat2021}
B.~Legat, O.~Dowson, J.~D. Garcia, and M.~Lubin.
\newblock {MathOptInterface}: A data structure for mathematical optimization
  problems.
\newblock {\em INFORMS Journal on Computing}, 2021.

\bibitem{lessard2016analysis}
L.~Lessard, B.~Recht, and A.~Packard.
\newblock Analysis and design of optimization algorithms via integral quadratic
  constraints.
\newblock {\em SIAM Journal on Optimization}, 26(1):57--95, 2016.

\bibitem{liberti2008introduction}
L.~Liberti.
\newblock Introduction to global optimization.
\newblock {\em Ecole Polytechnique}, 2008.

\bibitem{lieder2021convergence}
F.~Lieder.
\newblock On the convergence rate of the halpern-iteration.
\newblock {\em Optimization Letters}, 15(2):405--418, 2021.

\bibitem{locatelli2013global}
M.~Locatelli and F.~Schoen.
\newblock {\em Global Optimization: Theory, Algorithms, and Applications}.
\newblock SIAM, 2013.

\bibitem{luo2010semidefinite}
Z.-Q. Luo, W.-K. Ma, A.~M.-C. So, Y.~Ye, and S.~Zhang.
\newblock Semidefinite relaxation of quadratic optimization problems.
\newblock {\em IEEE Signal Processing Magazine}, 27(3):20--34, 2010.

\bibitem{lyapunov1892}
A.~M. Lyapunov.
\newblock The general problem of the stability of motion.
\newblock {\em Communications of the Mathematical Society of Kharkov}, 1892.

\bibitem{mccormick1976computability}
G.~P. McCormick.
\newblock {Computability of global solutions to factorable nonconvex programs:
  Part I—Convex underestimating problems}.
\newblock {\em Mathematical Programming}, 10(1):147--175, 1976.

\bibitem{mordukhovich2006PartI}
B.~S. Mordukhovich.
\newblock {\em Variational Analysis and Generalized Differentiation I: Basic
  Theory}.
\newblock Springer, 2006.

\bibitem{mosek}
{\relax MOSEK ApS}.
\newblock {\em MOSEK Optimizer API for C 9.3.6}, 2019.

\bibitem{nemirovski1995information}
A.~Nemirovski.
\newblock Information-based complexity of convex programming.
\newblock {\em Lecture Notes, {T}echnion - {I}srael {I}nstitute of
  {T}echnology}, 1995.

\bibitem{nemirovsky1992information}
A.~Nemirovsky.
\newblock Information-based complexity of linear operator equations.
\newblock {\em Journal of Complexity}, 8(2):153--175, 1992.

\bibitem{Nest83}
Y.~Nesterov.
\newblock A method of solving a convex programming problem with convergence
  rate ${O}(1/k^2)$.
\newblock {\em Soviet Mathematics Doklady}, 27(2):372--376, 1983.

\bibitem{nesterov2003introductory}
Y.~Nesterov.
\newblock {\em Lectures on Convex Optimization}, volume 137.
\newblock second edition, 2018.

\bibitem{padberg1989boolean}
M.~Padberg.
\newblock The boolean quadric polytope: some characteristics, facets and
  relatives.
\newblock {\em Mathematical Programming}, 45(1):139--172, 1989.

\bibitem{parkparkryu2021}
C.~Park, J.~Park, and E.~K. Ryu.
\newblock Factor-{$\sqrt{2}$} acceleration of accelerated gradient methods.
\newblock {\em Applied Mathematics \& Optimization}, 2023.

\bibitem{park2021}
C.~Park and E.~K. Ryu.
\newblock Optimal first-order algorithms as a function of inequalities.
\newblock {\em arXiv preprint arXiv:2110.11035}, 2021.

\bibitem{park2022}
J.~Park and E.~K. Ryu.
\newblock Exact optimal accelerated complexity for fixed-point iterations.
\newblock {\em International Conference on Machine Learning}, 2022.

\bibitem{Fabian21}
F.~Pedregosa.
\newblock {On the link between optimization and polynomials: acceleration
  without Momentum}.
\newblock 2021.
\newblock \url{http://fa.bianp.net/blog/2021/no-momentum/}.

\bibitem{reuther2018interactive}
A.~Reuther, J.~Kepner, C.~Byun, S.~Samsi, W.~Arcand, D.~Bestor, B.~Bergeron,
  V.~Gadepally, M.~Houle, M.~Hubbell, M.~Jones, A.~Klein, L.~Milechin,
  J.~Mullen, A.~Prout, A.~Rosa, C.~Yee, and P.~Michaleas.
\newblock Interactive supercomputing on 40,000 cores for machine learning and
  data analysis.
\newblock In {\em 2018 IEEE High Performance extreme Computing Conference
  (HPEC)}, pages 1--6. IEEE, 2018.

\bibitem{rockafellar2020characterizing}
R.~T. Rockafellar.
\newblock Characterizing firm nonexpansiveness of prox mappings both locally
  and globally.
\newblock {\em Journal of Nonlinear and Convex Analysis}, 22(5), 2021.

\bibitem{rockafellar2009variational}
R.~T. Rockafellar and R.~J.-B. Wets.
\newblock {\em Variational Analysis}.
\newblock Springer Science \& Business Media, 2009.

\bibitem{ryu2020operator}
E.~K. Ryu, A.~B. Taylor, C.~Bergeling, and P.~Giselsson.
\newblock Operator splitting performance estimation: Tight contraction factors
  and optimal parameter selection.
\newblock {\em SIAM Journal on Optimization}, 30(3):2251--2271, 2020.

\bibitem{RyuYin2022_largescale}
E.~K. Ryu and W.~Yin.
\newblock {\em Large-Scale Convex Optimization via Monotone Operators}.
\newblock {Cambridge University Press}, 2022.

\bibitem{sherali1990hierarchy}
H.~D. Sherali and W.~P. Adams.
\newblock A hierarchy of relaxations between the continuous and convex hull
  representations for zero-one programming problems.
\newblock {\em SIAM Journal on Discrete Mathematics}, 3(3):411--430, 1990.

\bibitem{sherali2002enhancing}
H.~D. Sherali and B.~M. Fraticelli.
\newblock Enhancing {RLT} relaxations via a new class of semidefinite cuts.
\newblock {\em Journal of Global Optimization}, 22(1):233--261, 2002.

\bibitem{Shi2017}
Z.~Shi and R.~Liu.
\newblock Better worst-case complexity analysis of the block coordinate descent
  method for large scale machine learning.
\newblock {\em International Conference on Machine Learning and Applications},
  2017.

\bibitem{taylor2017convex}
A.~B. Taylor.
\newblock {\em Convex Interpolation and Performance Estimation of First-Order
  Methods for Convex Optimization}.
\newblock PhD thesis, Catholic University of Louvain, Louvain-la-Neuve,
  Belgium, 2017.

\bibitem{TaylorBlog20}
A.~B. Taylor.
\newblock {Computer-aided analyses in optimization}.
\newblock 2020.
\newblock \url{https://francisbach.com/computer-aided-analyses/}.

\bibitem{taylor2019stochastic}
A.~B. Taylor and F.~Bach.
\newblock Stochastic first-order methods: Non-asymptotic and computer-aided
  analyses via potential functions.
\newblock {\em Conference on Learning Theory}, 2019.

\bibitem{taylor2021optimal}
A.~B. Taylor and Y.~Drori.
\newblock An optimal gradient method for smooth strongly convex minimization.
\newblock {\em Mathematical Programming}, pages 1--38, 2022.

\bibitem{taylor2017exact}
A.~B. Taylor, J.~M. Hendrickx, and F.~Glineur.
\newblock Exact worst-case performance of first-order methods for composite
  convex optimization.
\newblock {\em SIAM Journal on Optimization}, 27(3):1283--1313, 2017.

\bibitem{taylor2017smooth}
A.~B. Taylor, J.~M. Hendrickx, and F.~Glineur.
\newblock Smooth strongly convex interpolation and exact worst-case performance
  of first-order methods.
\newblock {\em Mathematical Programming}, 161(1--2):307--345, 2017.

\bibitem{taylor2018exact}
A.~B. Taylor, J.~M. Hendrickx, and F.~Glineur.
\newblock Exact worst-case convergence rates of the proximal gradient method
  for composite convex minimization.
\newblock {\em Journal of Optimization Theory and Applications},
  178(2):455--476, 2018.

\bibitem{taylor2018lyapunov}
A.~B. Taylor, B.~Van~Scoy, and L.~Lessard.
\newblock Lyapunov functions for first-order methods: Tight automated
  convergence guarantees.
\newblock {\em International Conference on Machine Learning}, 2018.

\bibitem{Tran-Dinh2022}
Q.~Tran-Dinh.
\newblock The connection between {N}esterov's accelerated methods and {H}alpern
  fixed-point iterations.
\newblock {\em arXiv preprint arXiv:2203.04869}, 2022.

\bibitem{van2017fastest}
B.~Van~Scoy, R.~A. Freeman, and K.~M. Lynch.
\newblock The fastest known globally convergent first-order method for
  minimizing strongly convex functions.
\newblock {\em IEEE Control Systems Letters}, 2(1):49--54, 2017.

\bibitem{wachter2006implementation}
A.~W{\"a}chter and L.~T. Biegler.
\newblock On the implementation of an interior-point filter line-search
  algorithm for large-scale nonlinear programming.
\newblock {\em Mathematical Programming}, 106(1):25--57, 2006.

\bibitem{IpoptLocConv}
A.~Wächter and L.~T. Biegler.
\newblock Line search filter methods for nonlinear programming: Local
  convergence.
\newblock {\em SIAM Journal on Optimization}, 16(1):32--48, 2005.

\bibitem{yoon2021accelerated}
T.~Yoon and E.~K. Ryu.
\newblock Accelerated algorithms for smooth convex-concave minimax problems
  with $\mathcal{O}(1/k^2)$ rate on squared gradient norm.
\newblock {\em International Conference on Machine Learning}, 2021.

\bibitem{young1953}
D.~Young.
\newblock On {R}ichardson's method for solving linear systems with positive
  definite matrices.
\newblock {\em Journal of Mathematics and Physics}, 32(1--4):243--255, 1953.

\bibitem{ZhouTianSoCheng2021_practical}
K.~Zhou, L.~Tian, A.~M.-C. So, and J.~Cheng.
\newblock Practical schemes for finding near-stationary points of convex
  finite-sums.
\newblock {\em International Conference on Artificial Intelligence and
  Statistics}, 2022.

\end{thebibliography}

\section{Appendix}

\subsection{Function class}
\label{appendix:function-class}
The BnB-PEP methodology applies to quadratically representable function classes.
{We say
$\mathcal{F}$ is \emph{quadratically representable} if the membership
$f\in\mathcal{F}$ is defined by an inequality of the form 
\[
c_{0}f(y)\geq c_{1}f(x)+q(x,y,u,v),\quad \forall u\in\partial f(x),\,v\in\partial f(y),\,x,y\in\rl^{d},
\]
where 
\begin{align*}
q(x,y,u,v)\triangleq\, & c_{2}\left\langle x\mid x\right\rangle +c_{3}\left\langle y\mid y\right\rangle +c_{4}\left\langle u\mid u\right\rangle +c_{5}\left\langle v\mid v\right\rangle +c_{6}\left\langle x\mid y\right\rangle +c_{7}\left\langle x\mid u\right\rangle \nonumber \\
 & \quad\quad\quad\quad\quad+c_{8}\left\langle x\mid v\right\rangle +c_{9}\left\langle y\mid u\right\rangle +c_{10}\left\langle y\mid v\right\rangle +c_{11}\left\langle u\mid v\right\rangle +c_{12},\label{eq:quadratic_inequality}
\end{align*}
with $c_{i}\in\mathbb{R}$ for $i\in[0:12]$ along with an optional
inequality of the form 
\[
\|u\|_{2}\le M,\quad\forall u\in\partial f(x),\,x\in\rl^{d},
\]
for some $M>0$.} 
Many of the commonly considered the function classes are quadratically representable, and we list a few in the following.
The class of $L$-smooth convex functions $\mathcal{F}_{0,L}$ satisfies \cite[Theorem 2.1.5, Equation (2.1.10)]{nesterov2003introductory}
\[
f(y)\geq f(x)+\left\langle \nabla f(x)\mid y-x\right\rangle +\frac{1}{2L}\|\nabla f(x)-\nabla f(y)\|^{2},\quad
\forall x,y\in\rl^{d}.
\]
($L$-smooth functions are differentiable everywhere.)
The class of $L$-smooth $\mu$-strongly convex functions $\mathcal{F}_{\mu,L}$ satisfies \cite[Theorem 1]{taylor2021optimal}
\begin{align*}\
f(y)&\geq f(x)+\left\langle \nabla f(x)\mid y-x\right\rangle +\frac{1}{2(L-\mu)}\|\nabla f(x)-\nabla f(y)\|^{2}\\
&\quad+\frac{L\mu}{2(L-\mu)}\|x-y\|^{2}-\frac{\mu}{2(L-\mu)}\left\langle \nabla f(x)-\nabla f(y)\mid x-y\right\rangle,\quad
\forall x,y\in\rl^{d}.
\end{align*}
The class of $L$-smooth nonconvex functions $\mathcal{F}_{-L,L}$ satisfies \cite[Theorem~6]{drori2020complexity}
\[
f(y)\geq f(x)+\left\langle \nabla f(x)\mid y-x\right\rangle +\frac{1}{2L}\|\nabla f(x)-\nabla f(y)\|^{2}
-\frac{L}{4}\|x-y-\frac{1}{L}(\nabla f(x)-\nabla f(y))\|^2
,\quad
\forall x,y\in\rl^{d}.
\]
which is also equivalent to \cite[Theorem 3.10]{taylor2017exact}
\[
f(y) \geq f(x)-\frac{L}{4}\|x-y\|^{2}+\frac{1}{2}\left\langle \nabla f(x)+\nabla f(y)\mid y-x \right\rangle +\frac{1}{4L}\|\nabla f(x)-\nabla f(y)\|^{2},\quad
\forall x,y\in\rl^{d}.
\]
The class of $\rho$-weakly convex functions $\mathcal{W}_{\rho,\infty}$ is satisfies \cite[Lemma 2.1]{davis2019stochastic}
\[
f(y)\geq f(x)+\left\langle u\mid y-x\right\rangle -\frac{\rho}{2}\|x-y\|^{2},\quad \forall u\in \partial f(x),\,x,y\in\rl^{d}.
\]
The class of nonsmooth convex functions with $L$-bounded subgradient $\mathcal{F}_{0,\infty}^{L}$ satisfies \cite[Definition 3.1]{taylor2017exact}
\begin{gather*}
f(y)\geq f(x)+\left\langle u\mid y-x\right\rangle,\quad \forall u\in \partial f(x),\,x,y\in\rl^{d},\\
\|u\|_{2}\leq L,\quad \forall\,u\in\partial f(x),\,x\in\rl^{d}.
\end{gather*}
The class of $\rho$-weakly convex functions with $L$-bounded subgradients $\mathcal{W}_{\rho,L}$ satisfies  \cite[Lemma 2.1]{davis2019stochastic}, \cite[$\mathsection$3.1 (c)]{taylor2017exact}
\begin{gather*}
f(y)\geq f(x)+\left\langle u\mid y-x\right\rangle -\frac{\rho}{2}\|x-y\|^{2}
,\quad \forall u\in \partial f(x),\,x,y\in\rl^{d},\\
\|u\|_{2}\leq L,\quad \forall\,u\in\partial f(x),\,x\in\rl^{d}.
\end{gather*}

Some, but not all, of the quadratically representable functions classes have interpolation results analogous to Lemma~\ref{Thm:Interpolation-inequality-FmuL}.
In particular, $\mathcal{F}_{0,L}$, $\mathcal{F}_{\mu,L}$, $\mathcal{F}_{-L,L}$, $\mathcal{W}_{\rho,\infty}$, and $\mathcal{F}_{0,\infty}^{L}$ have interpolation results (\cite[Theorem 4]{taylor2017smooth}, \cite[Theorem 3.10]{taylor2017exact},  \cite[Theorem 3.5]{taylor2017exact}), while $\mathcal{W}_{\rho,L}$ does not.
We can still use the BnB-PEP methodology without an interpolation condition, as we do in $\mathsection$\ref{subsec:pot-bnb-pep-wcvx-Moreau}, but we loose the tightness guarantee.

\subsection{Discussion on the strong duality assumption} \label{appendix:assumption2}

Consider the setup of $\mathsection$\ref{sec:QCQO-framework-for-template-problem}, and let us \emph{not} assume strong duality.
Then, by weak duality, we have
\[
\mathcal{R}(M(\alpha),\mathcal{E},\mathcal{F},\mathcal{C})\le \overline{\mathcal{R}}(M(\alpha),\mathcal{E},\mathcal{F},\mathcal{C}),
\]
where $\mathcal{R}$ is as given by \eqref{eq:worst-case-pfm-sdp-1}, $\overline{\mathcal{R}}$ is as defined in \eqref{eq:worst-case-pfm-dual-1}, and $M(\alpha)$ is the FSFOM parameterized by the stepsize list $\alpha=\{\alpha_{i,j}\}_{0\leq j<i\leq N}$.

Strong duality does provably hold generically.
Let 
\[
A^\mathrm{nice}=\{
\alpha \,|\, \alpha_{i, i-1} \neq 0,\,\forall\,i=1,\dots,N\}
\]
be the set of ``nice'' stepsize lists.
\begin{lem}{\cite[Theorem 6]{taylor2017smooth}}
\label{fact::2}
If $\alpha\in A^\mathrm{nice}$, then strong duality holds, i.e.,
\[
\mathcal{R}(M(\alpha),\mathcal{E},\mathcal{F},\mathcal{C})= \overline{\mathcal{R}}(M(\alpha),\mathcal{E},\mathcal{F},\mathcal{C}),
\quad
\forall\,\alpha\in A^\mathrm{nice}.
\]
\end{lem}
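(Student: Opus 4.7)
The plan is to prove strong duality by exhibiting a Slater point for the primal SDP \eqref{eq:worst-case-pfm-sdp-1}. Since that problem has only inequality constraints (the interpolation inequalities and $\tr G B_{0,\star} \le R^2$) together with the cone constraint $G \succeq 0$ and a free variable $F$, it suffices to construct a single $(F,G)$ for which all inequalities hold strictly and $G \succ 0$; the classical SDP strong duality theorem then delivers $\mathcal{R}(M(\alpha),\mathcal{E},\mathcal{F},\mathcal{C}) = \overline{\mathcal{R}}(M(\alpha),\mathcal{E},\mathcal{F},\mathcal{C})$.

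The concrete construction I would use is the quadratic $f(x) = \tfrac{1}{2} x^\top A x$ on $\mathbb{R}^d$ with $A = \mathrm{diag}(a_1,\dots,a_{N+2},\mu,\dots,\mu)$ where $a_1,\dots,a_{N+2}$ are $N+2$ distinct values in $(\mu,L)$, and starting point $x_0 = r \sum_{k=1}^{N+2} e_k$ for $r$ small enough that $(N+2) r^2 < R^2$ (with $x_\star = 0$). Then $f \in \mathcal{F}_{\mu,L}$, the initial condition is strict, and the corresponding $(F,G)$ is read off via \eqref{eq:grammian-mats} applied to $\widetilde{f} = f - (\mu/2)\|\cdot\|^2$.

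The heart of the argument is a linear-independence claim that gives both $G \succ 0$ and strict interpolation. Writing $x_i = p_i(A) x_0$ for iterate polynomials $p_i$, the FSFOM recursion yields the leading coefficient of $p_i$ equal to $\prod_{k=1}^{i} (-h_{k,k-1}/L)$, which is nonzero \emph{precisely} under the hypothesis $\alpha \in A^{\mathrm{nice}}$ (equivalently $h_{k,k-1}\ne 0$ for all $k$). Hence $\deg p_i = i$, and the $N+2$ columns of $P$, namely $x_0$ and $g_i = A p_i(A) x_0$, take the form $\{q_j(A) x_0\}_{j=0}^{N+1}$ with $\deg q_j = j$. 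On the $(N+2)$-dimensional invariant subspace where $A$ acts with distinct eigenvalues $a_1,\dots,a_{N+2}$ and $x_0$ has all components nonzero, a Vandermonde computation shows these $N+2$ vectors are linearly independent, so $G = P^\top P \succ 0$; in particular $x_i \ne x_j$ for $i \ne j$ since $p_i - p_j$ is a nonzero polynomial of degree $\max(i,j)$.

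Strict interpolation then follows from a direct calculation: with $B = A - \mu I$ and $\widetilde{f}_i, \widetilde{g}_i$ denoting the values and gradients of $\widetilde{f}$,
\[
\widetilde{f}_i - \widetilde{f}_j - \langle \widetilde{g}_j, x_i - x_j\rangle - \tfrac{1}{2(L-\mu)} \|\widetilde{g}_i - \widetilde{g}_j\|^2 = \tfrac{1}{2(L-\mu)} (x_i - x_j)^\top B (L-\mu - B)(x_i - x_j),
\]
and on the invariant subspace $B$ has eigenvalues strictly in $(0,L-\mu)$, making $B(L-\mu - B)$ positive definite; combined with $x_i \ne x_j$, strict positivity follows. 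The main potential obstacle is bookkeeping edge cases, notably when $\mu = 0$ and when the chosen eigenvalues $\{a_k\}$ coincidentally hit a root of some iteration polynomial $p_i$; both are handled by noting that only finitely many bad choices exist in the open interval $(\mu,L)$, so a generic perturbation of $\{a_k\}$ avoids them and preserves every strict inequality by continuity.
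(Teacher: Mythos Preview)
The paper does not supply its own proof of this lemma; it simply cites \cite[Theorem~6]{taylor2017smooth} as an external fact. Your Slater-point construction is correct and is precisely the strategy used in that reference: manufacture a strictly feasible $(F,G)$ from a nondegenerate quadratic so that $G\succ 0$ and every interpolation inequality is strict, then invoke conic strong duality. The polynomial/Vandermonde argument tying the niceness condition $\alpha_{i,i-1}\neq 0$ to $\deg p_i=i$ and hence to invertibility of $G$ is exactly the right mechanism.

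Two minor remarks. First, your caveat about the $a_k$ possibly hitting roots of some $p_i$ is unnecessary: since $\deg p_i=i\le N<N+2$ (and likewise $\deg(p_i-p_j)\le N$), no such polynomial can vanish at all $N+2$ distinct $a_k$, so $x_i\ne x_j$ and $x_i\ne 0$ are automatic once the $a_k$ are chosen distinct. The $\mu=0$ case also needs no special handling, as $B=A$ then still has eigenvalues strictly in $(0,L)$ on the active subspace. Second, Slater on the primal yields strong duality only when the primal value is finite; you should state this and dispatch it by noting that the dual \eqref{eq:worst-case-pfm-dual-1} is feasible (any known rate for gradient descent on $\mathcal{F}_{\mu,L}$ supplies a feasible $(\nu,\lambda,Z)$), which bounds the primal above by weak duality.
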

Note that $\alpha$ is an $N(N+1)/2$-dimensional and the set of $\alpha\notin A^\mathrm{nice}$ is lower-dimensional.
In this sense, strong duality holds generically.
When we solve the BnB-PEP-QCQP, we find
\[
\alpha^\star\in\argmin_{\alpha}\overline{\mathcal{R}}(M(\alpha),\mathcal{E},\mathcal{F},\mathcal{C}).
\]
In all of our experiments, we observe, a posteriori, that $\alpha^\star\in A^\mathrm{nice}$. 
So 
\[
\mathcal{R}(M(\alpha^\star),\mathcal{E},\mathcal{F},\mathcal{C})
=
\overline{\mathcal{R}}(M(\alpha^\star),\mathcal{E},\mathcal{F},\mathcal{C}).
\]
However, the actual problem we wish to solve is
\[
\begin{array}{ll}
\underset{\alpha }{\mbox{minimize}} &  \mathcal{R}(M(\alpha),\mathcal{E},\mathcal{F},\mathcal{C})\end{array},
\]
and it is possible that $\alpha^\star\notin \mathcal{R}(M(\alpha),\mathcal{E},\mathcal{F},\mathcal{C})$.
This would be the case if there is $\alpha^{\star,\mathrm{true}}\notin A^\mathrm{nice}$ such that 
\begin{gather}
\mathcal{R}(M(\alpha^{\star,\mathrm{true}}),\mathcal{E},\mathcal{F},\mathcal{C})
<
\overline{\mathcal{R}}(M(\alpha^{\star,\mathrm{true}}),\mathcal{E},\mathcal{F},\mathcal{C})
\label{eq:pathology2}
\\
\mathcal{R}(M(\alpha^{\star,\mathrm{true}}),\mathcal{E},\mathcal{F},\mathcal{C})
<
\mathcal{R}(M(\alpha^\star),\mathcal{E},\mathcal{F},\mathcal{C})
\label{eq:pathology1}
\\
\overline{\mathcal{R}}(M(\alpha^\star),\mathcal{E},\mathcal{F},\mathcal{C})\le 
\overline{\mathcal{R}}(M(\alpha^{\star,\mathrm{true}}),\mathcal{E},\mathcal{F},\mathcal{C})
\label{eq:pathology3}
\end{gather}
Condition\eqref{eq:pathology2} states that for $\alpha^{\star,\mathrm{true}}$, the duality gap is positive.
Condition \eqref{eq:pathology1} states that $\alpha^{\star,\mathrm{true}}$ has a better performance than $\alpha^\star$.
Condition \eqref{eq:pathology3} reaffirms that $\alpha^{\star}$ is optimal for dual problem.
Figure~\ref{appendix:figure1} illustrates this circumstance.
While this pathology is probably extremely unlikely, its possibility is a consequence of the gap in the reasoning.

\begin{figure}[h]
\centering
\includegraphics[width=0.8\textwidth]{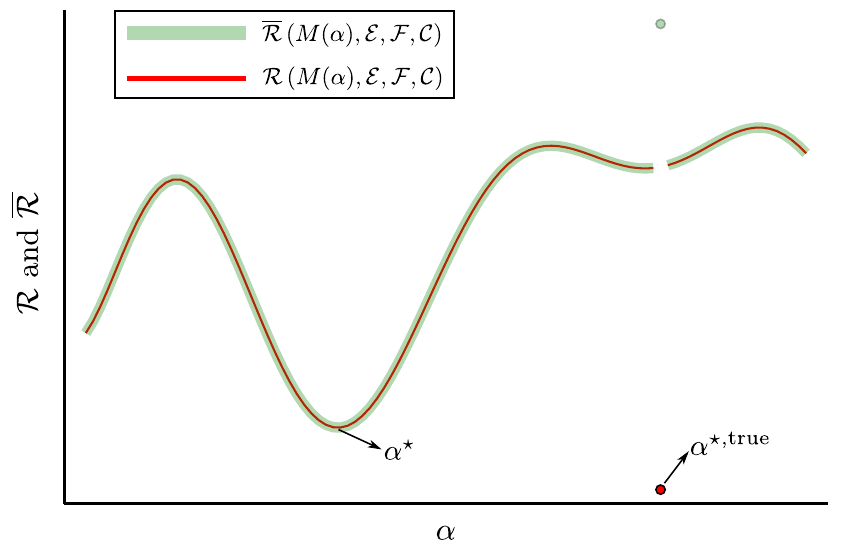}
\caption{
Illustration of $\mathcal{R}(M(\alpha),\mathcal{E},\mathcal{F},\mathcal{C})$ and $\overline{\mathcal{R}}(M(\alpha),\mathcal{E},\mathcal{F},\mathcal{C})$ in a pathological setup.
Even if $\alpha^\star\in \argmin\overline{\mathcal{R}}(M(\alpha),\mathcal{E},\mathcal{F},\mathcal{C})$ satisfies $\alpha^\star\in A^\mathrm{nice}$, it is possible that $\alpha^\star\notin \argmin\mathcal{R}(M(\alpha),\mathcal{E},\mathcal{F},\mathcal{C})$.}
\label{appendix:figure1}
\end{figure}

One can rigorously exclude this pathology in most well-behaved setups using the arguments of Park and Ryu \cite[Claim~4]{park2021}:
\begin{itemize}
\item[(i)] Prove $\mathcal{R}(M(\alpha),\mathcal{E},\mathcal{F},\mathcal{C})$ is a continuous function of $\alpha$.
\item[(ii)] Observe, a posteriori, that $\alpha^\star\in A^\mathrm{nice}$.
\end{itemize}
(One need not show that strong duality holds for $\alpha\notin A^\mathrm{nice}$.)
Then, we have
\begin{align*}
\inf_{\alpha\in \rl^{N(N+1)/2}}
\mathcal{R}(M(\alpha),\mathcal{E},\mathcal{F},\mathcal{C})
&\stackrel{\text{(a)}}{=}
\inf_{\alpha\in A^\mathrm{nice}}
\mathcal{R}(M(\alpha),\mathcal{E},\mathcal{F},\mathcal{C})\\
&\stackrel{\text{(b)}}{=}
\inf_{\alpha\in A^\mathrm{nice}}
\overline{\mathcal{R}}(M(\alpha),\mathcal{E},\mathcal{F},\mathcal{C})\\
&\stackrel{\text{(c)}}{=}
\inf_{\alpha\in \rl^{N(N+1)/2}}
\overline{\mathcal{R}}(M(\alpha),\mathcal{E},\mathcal{F},\mathcal{C})\\
&\stackrel{\text{(d)}}{=}
\overline{\mathcal{R}}(M(\alpha^\star),\mathcal{E},\mathcal{F},\mathcal{C}).
\end{align*}
where (a) follows from continuity of $\mathcal{R}(M(\alpha),\mathcal{E},\mathcal{F},\mathcal{C})$ and denseness of $A^\mathrm{nice}\subseteq\rl^{N(N+1)/2}$,
(b) follows from strong duality on $A^\mathrm{nice}$,
and (c) and (d) follows from the a posteriori observation that $\alpha^\star\in A^\mathrm{nice}$.
In this case, we would have $\alpha^\star\in \argmin\mathcal{R}(M(\alpha),\mathcal{E},\mathcal{F},\mathcal{C})$, even if there is a duality gap, as illustrated in Figure \ref{appendix:figure2}. Showing that $\mathcal{R}(M(\alpha),\mathcal{E},\mathcal{F},\mathcal{C})$ is a continuous function of $\alpha$ can be done by carefully arguing that, for the particular setup, the performance continuously depends on the FSFOM's stepsize.

\begin{figure}[h]
\centering
\includegraphics[width=0.8\textwidth]{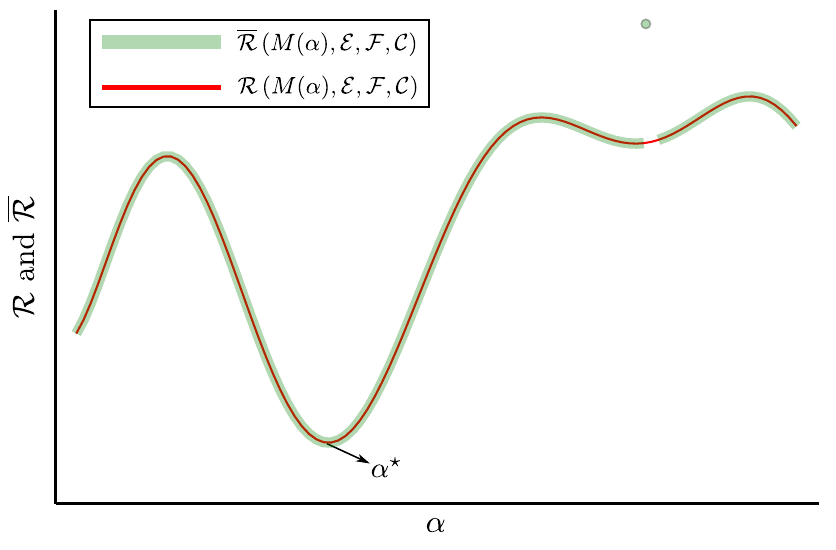}
\caption{
Illustration of $\mathcal{R}(M(\alpha),\mathcal{E},\mathcal{F},\mathcal{C})$ and $\overline{\mathcal{R}}(M(\alpha),\mathcal{E},\mathcal{F},\mathcal{C})$ when  $\mathcal{R}(M(\alpha),\mathcal{E},\mathcal{F},\mathcal{C})$ is continuous.
If $\alpha^\star\in \argmin\overline{\mathcal{R}}(M(\alpha),\mathcal{E},\mathcal{F},\mathcal{C})$ satisfies $\alpha^\star\in A^\mathrm{nice}$, then $\alpha^\star\in \argmin\mathcal{R}(M(\alpha),\mathcal{E},\mathcal{F},\mathcal{C})$, even if there is a duality gap.}
\label{appendix:figure2}
\end{figure}

\subsection{Exact rank-1 nonconvex semidefinite representation of the BnB-PEP-QCQP
\label{subsec:Derivation-of-the-rank-1-constraint}}

In this section, we derive the exact rank-1 nonconvex semidefinite representation
of \eqref{eq:BnB-PEP-Preli}. For the other BnB-PEP-QCQPs, the steps to construct such a  nonconvex semidefinite representation are identical. 

First, we define: 
\begin{equation}
w=\textrm{vec}(\alpha,\nu,\lambda),\label{eq:vectorized_variables}
\end{equation}
which stacks the elements of $\alpha,\nu,\lambda$ in a column vector
$w$, and denote its number of elements by $\lvert w\rvert$. Then
we can define an one-to-one and onto index selector function $\iota(\cdot)$
that takes $\alpha_{i,j}$, $\nu,$ or $\lambda_{i,j}$ as an input,
and provides the unique index of that element in $w$ with the range
$\{1,2,\ldots,\lvert w\rvert\}$ \emph{i.e.},
\[
\alpha_{i,j}=w_{\iota(\alpha_{i,j})},\nu=w_{\iota(\nu)}, \lambda_{i,j}=w_{\iota(\lambda_{i,j})}.
\]

Next, define $W=ww^{\top}\in\mathbb{S}^{\vert w\vert}.$ For notational
convenience define, $\vert\mathbf{x}_{0}\vert=N+2$. Defining the
map $\iota$ mathematically can be quite tedious and does not provide
any insight, but it very easy to implement through the \texttt{Julia
}packages \texttt{OrderedCollections} and \texttt{JuMP}. Recall that
for $i\in[1:N],$ we have:

\begin{align*}
\mathbf{x}_{i} & =\mathbf{x}_{0}\left(1-(\mu/L)\sum_{j=0}^{i-1}\alpha_{i,j}\right)-(1/L)\sum_{j=0}^{i-1}\alpha_{i,j}\mathbf{g}_{j}\\
 & =\underbrace{\left[I_{\vert\mathbf{x}_{0}\vert\times\vert\mathbf{x}_{0}\vert}\mid\frac{-1}{L}\left(\sum_{j=0}^{i-1}(\mu\mathbf{x}_{0}+\mathbf{g}_{j})e_{\iota(\alpha_{i,j})}^{\top}\right)\right]}_{\mathfrak{J}^{[i]}\in\rl^{\vert\mathbf{x}_{0}\vert\times(\vert\mathbf{x}_{0}\vert+\vert w\vert)}}\begin{bmatrix}\mathbf{x}_{0}\\
w
\end{bmatrix}
\end{align*}
 Also, define $\mathfrak{J}^{[\star]}=\mathbf{0}_{\vert\mathbf{x}_{0}\vert\times(\vert\mathbf{x}_{0}\vert+\vert w\vert)},$
and $\mathfrak{J}^{[0]}=[I_{\vert\mathbf{x}_{0}\vert\times\vert\mathbf{x}_{0}\vert}\mid\mathbf{0}_{\vert\mathbf{x}_{0}\vert\times\vert w\vert}].$
Then, for all $i\in I_{N}^{\star},$ we have:
\[
\mathbf{x}_{i}=\mathfrak{J}^{[i]}\begin{bmatrix}\mathbf{x}_{0}\\
w
\end{bmatrix}.
\]
Hence, we have for any $i,j\in I_{N}^{\star}$,
\begin{align*}
\mathbf{x}_{i}-\mathbf{x}_{j} & =(\mathfrak{J}^{[i]}-\mathfrak{J}^{[j]})\begin{bmatrix}\mathbf{x}_{0}\\
w
\end{bmatrix}\\
 & =\left[\underbrace{\mathfrak{G}^{[i,j]}}_{\in\rl^{\vert\mathbf{x}_{0}\vert\times\vert\mathbf{x}_{0}\vert}}\mid\underbrace{\mathfrak{H}^{[i,j]}}_{\in\rl^{\vert\mathbf{x}_{0}\vert\times\vert w\vert}}\right]\begin{bmatrix}\mathbf{x}_{0}\\
w
\end{bmatrix}\\
 & =\mathfrak{G}^{[i,j]}\mathbf{x}_{0}+\mathfrak{H}^{[i,j]}w\\
 & =\left[\underbrace{\mathfrak{g}^{[i,j][k]\top}\mathbf{x}_{0}}_{c^{[i,j][k]}}+\mathfrak{h}^{[i,j][k]\top}w\right]_{k=1}^{\vert\mathbf{x}_{0}\vert}\\
 & =\left[c^{[i,j][k]}+\mathfrak{h}^{[i,j][k]\top}w\right]_{k=1}^{\vert\mathbf{x}_{0}\vert},
\end{align*}
where $\mathfrak{h}^{[i,j][k]\top}$ and $\mathfrak{g}^{[i,j][k]\top}$
correspond to the $k$-th rows of $\mathfrak{H}^{[i,j]}$ and $\mathfrak{G}^{[i,j]}$,
respectively. Thus, for any $i,j\in I_{N}^{\star}$, $k,\ell\in[1:\vert\mathbf{x}_{0}\vert]:$
\begin{align}
[B_{i,j}(\alpha)]_{k,\ell} & =\left[(\mathbf{x}_{i}-\mathbf{x}_{j})\odot(\mathbf{x}_{i}-\mathbf{x}_{j})\right]_{k,\ell}\nonumber \\
 & =\left[\mathbf{x}_{i}-\mathbf{x}_{j}\right]_{k}\left[\mathbf{x}_{i}-\mathbf{x}_{j}\right]_{\ell}\nonumber \\
 & =\left[\mathfrak{G}^{[i,j]}\mathbf{x}_{0}+\mathfrak{H}^{[i,j]}w\right]_{k}\left[\mathfrak{G}^{[i,j]}\mathbf{x}_{0}+\mathfrak{H}^{[i,j]}w\right]_{\ell}\nonumber \\
 & =\left[c^{[i,j][k]}+\mathfrak{h}^{[i,j][k]\top}w\right]\left[c^{[i,j][\ell]}+\mathfrak{h}^{[i,j][\ell]\top}w\right]\nonumber \\
 & =c^{[i,j][k]}c^{[i,j][\ell]}+c^{[i,j][k]}\mathfrak{h}^{[i,j][\ell]\top}w+c^{[i,j][\ell]}\mathfrak{h}^{[i,j][k]\top}w\nonumber \\
 & \qquad+(\mathfrak{h}^{[i,j][k]\top}w)(\mathfrak{h}^{[i,j][\ell]\top}w)\nonumber \\
 & =c^{[i,j][k]}c^{[i,j][\ell]}+c^{[i,j][k]}\mathfrak{h}^{[i,j][\ell]\top}w+c^{[i,j][\ell]}\mathfrak{h}^{[i,j][k]\top}w\nonumber \\
 & \qquad+\sum_{\widetilde{i}=1}^{\lvert w\rvert}\sum_{\widetilde{j}=1}^{\lvert w\rvert}\mathfrak{h}_{\widetilde{i}}^{[i,j][k]}\mathfrak{h}_{\widetilde{j}}^{[i,j][\ell]}w_{\widetilde{i}}w_{\widetilde{j}}\nonumber \\
 & =c^{[i,j][k]}c^{[i,j][\ell]}+c^{[i,j][k]}\mathfrak{h}^{[i,j][\ell]\top}w+c^{[i,j][\ell]}\mathfrak{h}^{[i,j][k]\top}w\nonumber \\
 & \qquad+w^{\top}H^{[i,j][k,\ell]}w\nonumber \\
 & =c^{[i,j][k]}c^{[i,j][\ell]}+c^{[i,j][k]}\mathfrak{h}^{[i,j][\ell]\top}w+c^{[i,j][\ell]}\mathfrak{h}^{[i,j][k]\top}w\nonumber \\
 & \qquad+\tr\left(H^{[i,j][k,\ell]}W\right),\label{eq:lifted-eq-1}
\end{align}
 where $H^{[i,j][k,\ell]}\in\mathbb{S}^{\vert w\vert}$ with
its entries defined by
\[
H_{\widetilde{i},\widetilde{j}}^{[i,j][k,\ell]}=\frac{1}{2}\left(\mathfrak{h}_{\widetilde{i}}^{[i,j][k]}\mathfrak{h}_{\widetilde{j}}^{[i,j][\ell]}+\mathfrak{h}_{\widetilde{j}}^{[i,j][k]}\mathfrak{h}_{\widetilde{i}}^{[i,j][\ell]}\right),
\]
for $\widetilde{i},\widetilde{j}\in[1:\vert w\vert]$. Also, for $i,j\in I_{N}^{\star}$,
and $k,\ell\in[1:\vert\mathbf{x}_{0}\vert]:$
\begin{align*}
[A_{i,j}(\alpha)]_{k,\ell} & =\left[\mathbf{g}_{j}\odot(\mathbf{x}_{i}-\mathbf{x}_{j})\right]_{k,\ell}\\
 & =\left[\frac{1}{2}\mathbf{g}_{j}(\mathbf{x}_{i}-\mathbf{x}_{j})^{\top}+\frac{1}{2}(\mathbf{x}_{i}-\mathbf{x}_{j})\mathbf{g}_{j}^{\top}\right]_{k,\ell}\\
 & =\frac{1}{2}\left[\mathbf{g}_{j}\right]_{k}\left[\mathbf{x}_{i}-\mathbf{x}_{j}\right]_{\ell}+\frac{1}{2}\left[\mathbf{x}_{i}-\mathbf{x}_{j}\right]_{k}\left[\mathbf{g}_{j}\right]_{\ell}\\
 & =\frac{1}{2}\left[\mathbf{g}_{j}\right]_{k}(c^{[i,j][\ell]}+\mathfrak{h}^{[i,j][\ell]\top}w)+\frac{1}{2}(c^{[i,j][k]}+\mathfrak{h}^{[i,j][k]\top}w)\left[\mathbf{g}_{j}\right]_{\ell}\\
 & =\frac{1}{2}\left[c^{[i,j][\ell]}\left[\mathbf{g}_{j}\right]_{k}+c^{[i,j][k]}\left[\mathbf{g}_{j}\right]_{\ell}\right]+\\
 & \frac{1}{2}\left[\mathbf{g}_{j}\right]_{k}(\mathfrak{h}^{[i,j][\ell]\top}w)+\frac{1}{2}(\mathfrak{h}^{[i,j][k]\top}w)\left[\mathbf{g}_{j}\right]_{\ell}\\
 & =\overbrace{\frac{1}{2}\left[c^{[i,j][\ell]}\left[\mathbf{g}_{j}\right]_{k}+c^{[i,j][k]}\left[\mathbf{g}_{j}\right]_{\ell}\right]}^{=\widetilde{c}^{[i,j][k,\ell]}}+\\
 & \sum_{\widetilde{i}=1}^{\vert w\vert}\left(\frac{1}{2}\left[\mathbf{g}_{j}\right]_{k}\mathfrak{h}_{\widetilde{i}}^{[i,j][\ell]}+\frac{1}{2}\left[\mathbf{g}_{j}\right]_{\ell}\mathfrak{h}_{\widetilde{i}}^{[i,j][k]}\right)w_{\widetilde{i}}\\
 & =\widetilde{c}^{[i,j][k,\ell]}+\sum_{\widetilde{i}=1}^{\vert w\vert}\underbrace{\left(\frac{1}{2}\left[\mathbf{g}_{j}\right]_{k}\mathfrak{h}_{\widetilde{i}}^{[i,j][\ell]}+\frac{1}{2}\left[\mathbf{g}_{j}\right]_{\ell}\mathfrak{h}_{\widetilde{i}}^{[i,j][k]}\right)}_{=\widetilde{q}_{\widetilde{i}}^{[i,j][k,\ell]}}w_{\widetilde{i}}\\
 & =\widetilde{c}^{[i,j][k,\ell]}+\sum_{\widetilde{i}=1}^{\vert w\vert}\widetilde{q}_{\widetilde{i}}^{[i,j][k,\ell]}w_{\widetilde{i}}.
\end{align*}
Next, denoting $e_{\iota(\lambda_{i,j})}=\widetilde{d}^{[i,j]},$
we have for $k,\ell\in[1:\vert\mathbf{x}_{0}\vert]$
\begin{align}
 & \lambda_{i,j}\left[A_{i,j}(\alpha)\right]_{k,\ell}\nonumber \\
 & =(\widetilde{d}^{[i,j]\top}w)\left(\widetilde{c}^{[i,j][k,\ell]}+\sum_{\widetilde{i}=1}^{\vert w\vert}\widetilde{q}_{\widetilde{i}}^{[i,j][k,\ell]}w_{\widetilde{i}}\right)\nonumber \\
 & =\widetilde{c}^{[i,j][k,\ell]}(\widetilde{d}^{[i,j]\top}w)+\left(\sum_{\widetilde{j}=1}^{\vert w\vert}\widetilde{d}_{\widetilde{j}}^{[i,j]}w_{\widetilde{j}}\right)\left(\sum_{\widetilde{i}=1}^{\vert w\vert}\widetilde{q}_{\widetilde{i}}^{[i,j][k,\ell]}w_{\widetilde{i}}\right)\nonumber \\
 & =\widetilde{c}^{[i,j][k,\ell]}(\widetilde{d}^{[i,j]\top}w)+\sum_{\widetilde{i}=1}^{\vert w\vert}\sum_{\widetilde{j}=1}^{\vert w\vert}\left[\widetilde{d}_{\widetilde{j}}^{[i,j]}\widetilde{q}_{\widetilde{i}}^{[i,j][k,\ell]}\right]w_{\widetilde{i}}w_{\widetilde{j}}\nonumber \\
 & =\widetilde{c}^{[i,j][k,\ell]}(\widetilde{d}^{[i,j]\top}w)+w^{\top}S^{[i,j][k,\ell]}w\nonumber \\
 & =\widetilde{c}^{[i,j][k,\ell]}(\widetilde{d}^{[i,j]\top}w)+\tr\left(S^{[i,j][k,\ell]}W\right),\label{eq:lifted-eq-2}
\end{align}
 where: $S^{[i,j][k,\ell]}\in\mathbb{S}^{\vert w\vert}$
with its entries defined by
\begin{align*}
S^{[i,j][k,\ell]}[\widetilde{i},\widetilde{j}] & =\frac{1}{2}\left(\left[\widetilde{d}_{\widetilde{j}}^{[i,j]}\widetilde{q}_{\widetilde{i}}^{[i,j][k,\ell]}\right]+\left[\widetilde{d}_{\widetilde{i}}^{[i,j]}\widetilde{q}_{\widetilde{j}}^{[i,j][k,\ell]}\right]\right),
\end{align*}
 for $\widetilde{i},\widetilde{j}\in[1:\vert w\vert]$. Hence, we
have 
\[
\sum_{i,j\in I_{N}^{\star}:i\neq j}\lambda_{i,j}\left[A_{i,j}(\alpha)\right]_{k,\ell}=\left(\sum_{i,j\in I_{N}^{\star}:i\neq j}\widetilde{c}^{[i,j][k,\ell]}\widetilde{d}^{[i,j]\top}\right)w+\tr\left(\sum_{i,j\in I_{N}^{\star}}S^{[i,j][k,\ell]}\right)W.
\]
Using \eqref{eq:lifted-eq-1} and \eqref{eq:lifted-eq-2}, we have
the following nonconvex semidefinite representation of \eqref{eq:BnB-PEP-Preli}: 

\begin{align}
 & \mathcal{R}^{\star}\left(\mathcal{M}_{N}, \mathcal{E}, \mathcal{F},\mathcal{C}\right)\nonumber \\
= & \left(\begin{array}{ll}
\textrm{minimize}\quad\nu R^{2}\\
\textrm{subject to}\\
\sum_{(i,j)\in I_{N}^{\star}}\lambda_{i,j}a_{i,j}=0,\\
\nu\left[B_{0,\star}\right]{}_{k,\ell}-\left[C_{N,\star}\right]{}_{k,\ell}-\mu^{2}\left[B_{N,\star}(\alpha)\right]{}_{k,\ell}+\\
\qquad2\mu\left[A_{\star,N}(\alpha)\right]{}_{k,\ell}+\sum_{(i,j)\in I_{N}^{\star}}\lambda_{i,j}\left[A_{i,j}(\alpha)\right]_{k,\ell}+\\
\qquad\frac{1}{2(L-\mu)}\sum_{(i,j)\in I_{N}^{\star}}\lambda_{i,j}\left[C_{i,j}\right]{}_{k,\ell}=Z_{k,\ell},\quad k\in[1:\vert\mathbf{x}_{0}\vert],\;\ell\in[1:k],\\
\left[B_{N,\star}(\alpha)\right]{}_{k,\ell}=c^{[N,\star][k]}c^{[N,\star][\ell]}+c^{[N,\star][k]}\mathfrak{h}^{[N,\star][\ell]\top}w+c^{[N,\star][\ell]}\mathfrak{h}^{[N,\star][k]\top}w+\\
\qquad\tr\left(H^{[N,\star][k,\ell]}W\right),\quad k\in[1:\vert\mathbf{x}_{0}\vert],\;\ell\in[1:k],\\
\sum_{i,j\in I_{N}^{\star}:i\neq j}\lambda_{i,j}\left[A_{i,j}(\alpha)\right]_{k,\ell}=\left(\sum_{i,j\in I_{N}^{\star}:i\neq j}\widetilde{c}^{[i,j][k,\ell]}\widetilde{d}^{[i,j]\top}\right)w+\\
\qquad\tr\left(\sum_{i,j\in I_{N}^{\star}}S^{[i,j][k,\ell]}\right)W,\quad k\in[1:\vert\mathbf{x}_{0}\vert],\;\ell\in[1:k],\\
Z\succeq0,\\
W=ww^{\top},\\
\left(\forall i,j\in I_{N}^{\star}\right)\quad\lambda_{i,j}\geq0,\;\nu\geq0,
\end{array}\right)\label{eq:BnB-PEP-Preli-1-ncvx-SDP}
\end{align}
where $\lambda,\nu,Z,$ and $W$ are the decision variables, and $w=\textrm{vec}(\alpha,\nu,\lambda)$
as defined in \eqref{eq:vectorized_variables}. The constraint $W=ww^{\top}$
is nonconvex, but if we replace this constraint with the implied constraint
$W\succeq ww^{\top}$, then by using Schur complement, a convex semidefinite
relaxation of \eqref{eq:BnB-PEP-Preli} is given by: 

\begin{align}
& \left(\begin{array}{ll}
\textrm{minimize}\quad\nu R^{2}\\
\textrm{subject to}\\
\sum_{i,j\in I_{N}^{\star}}\lambda_{i,j}a_{i,j}=0,\\
\nu\left[B_{0,\star}\right]{}_{k,\ell}-\left[C_{N,\star}\right]{}_{k,\ell}-\mu^{2}\left[B_{N,\star}(\alpha)\right]{}_{k,\ell}+\\
\qquad2\mu\left[A_{\star,N}(\alpha)\right]{}_{k,\ell}+\sum_{(i,j)\in I_{N}^{\star}}\lambda_{i,j}\left[A_{i,j}(\alpha)\right]_{k,\ell}+\\
\qquad\frac{1}{2(L-\mu)}\sum_{(i,j)\in I_{N}^{\star}}\lambda_{i,j}\left[C_{i,j}\right]{}_{k,\ell}=Z_{k,\ell},\quad k\in[1:\vert\mathbf{x}_{0}\vert],\;\ell\in[1:k],\\
\left[B_{N,\star}(\alpha)\right]{}_{k,\ell}=c^{[N,\star][k]}c^{[N,\star][\ell]}+c^{[N,\star][k]}\mathfrak{h}^{[N,\star][\ell]\top}w+c^{[N,\star][\ell]}\mathfrak{h}^{[N,\star][k]\top}w+\\
\qquad\tr\left(H^{[N,\star][k,\ell]}W\right),\quad k\in[1:\vert\mathbf{x}_{0}\vert],\;\ell\in[1:k],\\
\sum_{i,j\in I_{N}^{\star}:i\neq j}\lambda_{i,j}\left[A_{i,j}(\alpha)\right]_{k,\ell}=\left(\sum_{i,j\in I_{N}^{\star}:i\neq j}\widetilde{c}^{[i,j][k,\ell]}\widetilde{d}^{[i,j]\top}\right)w+\\
\qquad\tr\left(\sum_{i,j\in I_{N}^{\star}}S^{[i,j][k,\ell]}\right)W\quad k\in[1:\vert\mathbf{x}_{0}\vert],\;\ell\in[1:k],\\
Z\succeq0,\\
\begin{bmatrix}W & w\\
w^{\top} & 1
\end{bmatrix}\succeq0,\\
\left(\forall i,j\in I_{N}^{\star}\right)\quad\lambda_{i,j}\geq0,\;\nu\geq0,
\end{array}\right)\label{eq:BnB-PEP-Preli-1-cvx-relaxation}
\end{align}
where $\lambda,\nu,Z,$ and $W$ are the decision variables. The optimal
objective value of \eqref{eq:BnB-PEP-Preli-1-cvx-relaxation} will
provide a lower bound to \eqref{eq:BnB-PEP-Preli-1-ncvx-SDP}.

\end{document}